\definecolor{codegreen}{rgb}{0,0.6,0}
\definecolor{codegray}{rgb}{0.5,0.5,0.5}
\definecolor{codepurple}{rgb}{0.58,0,0.82}
\definecolor{backcolour}{rgb}{0.95,0.95,0.92}
\lstdefinestyle{mystyle}{
  backgroundcolor=\color{backcolour},   commentstyle=\color{codegreen},
  keywordstyle=\color{magenta},
  numberstyle=\tiny\color{codegray},
  stringstyle=\color{codepurple},
  basicstyle=\ttfamily\footnotesize,
  breakatwhitespace=false,         
  breaklines=true,                 
  captionpos=b,                    
  keepspaces=true,                 
  numbers=left,                    
  numbersep=5pt,                  
  showspaces=false,                
  showstringspaces=false,
  showtabs=false,                  
  tabsize=2
}
\newtheorem{theorem}{Theorem}[section]
\newtheorem{lemma}[theorem]{Lemma}
\newtheorem*{conjecture*}{Conjecture}
\theoremstyle{definition}
\theoremstyle{remark}
\newtheorem*{remark*}{remark}
\renewcommand{\leq}{\leqslant}
\author{\textbf{Runbo Li}}
\address{International Curriculum Center, The High School Affiliated to Renmin University of China, Beijing, China}
\email{runbo.li.carey@gmail.com}
\title[]{Primes in almost all short intervals}
\subjclass[2020]{\textbf{11N05}, \textbf{11N35}, \textbf{11N36}}
\keywords{\textbf{Prime}, \textbf{Sieve methods}, \textbf{Dirichlet polynomial}}
\begin{document}
	
\begin{abstract}
The author sharpens a result of Jia (1996), showing that the interval $[n, n+n^{\frac{1}{21.5}+\varepsilon}]$ contains prime numbers for almost all $n$. Watt's mean value bound, a delicate sieve decomposition and more accurate estimates for integrals are used to good effect.
\end{abstract}

\maketitle


\tableofcontents

\section{Introduction}
One of the famous topics in number theory is to find prime numbers in short intervals. In 1937, Cramér \cite{Cramer1937} conjectured that every interval $[n, n+f(n) (\log n)^2]$ contains prime numbers for some $f(n) \to 1$ as $n \to \infty$. The Riemann Hypothesis implies that for all large $n$, the interval $[n, n+n^{\theta}]$ contains $\sim n^{\theta} (\log n)^{-1}$ prime numbers for every $\frac{1}{2}+\varepsilon \leqslant \theta \leqslant 1$. The first unconditional result of this asymptotic formula was proved by Hoheisel \cite{Hoheisel} in 1930 with $\theta=1-\frac{1}{33000}$. After the works of Hoheisel \cite{Hoheisel}, Heilbronn \cite{Heilbronn}, Chudakov \cite{Chudakov}, Ingham \cite{Ingham} and Montgomery \cite{Montgomery}, Huxley \cite{Huxley} proved in 1972 that the above asymptotic formula holds when $\theta>\frac{7}{12}$ by his zero density estimate. In 2024, Guth and Maynard \cite{GuthMaynard} improved this to $\theta>\frac{17}{30}$ by a new zero density estimate.

In 1979, Iwaniec and Jutila \cite{IwaniecJutila} first introduced a sieve method into this problem. They established a lower bound with correct order of magnitude (instead of an asymptotic formula) with $\theta=\frac{13}{23}$. After that breakthrough, many improvements were made and the value of $\theta$ was reduced successively to
$$
\frac{5}{9} \approx 0.5556,\ \frac{11}{20}=0.5500,\ \frac{17}{31} \approx 0.5484,\ \frac{23}{42} \approx 0.5476, 
$$
$$
\frac{1051}{1920} \approx 0.5474,\ \frac{35}{64} \approx 0.5469,\ \frac{6}{11} \approx 0.5455\ \text{ and }\ \frac{7}{13} \approx 0.5385
$$
by Iwaniec and Jutila \cite{IwaniecJutila}, Heath-Brown and Iwaniec \cite{HeathBrownIwaniec}, Pintz \cite{Pintz1} \cite{Pintz2}, Iwaniec and Pintz \cite{IwaniecPintz}, Mozzochi \cite{Mozzochi} and Lou and Yao \cite{LouYao3564} \cite{LouYao} \cite{LouYao2} \cite{LouYao3} respectively. In 1996, Baker and Harman \cite{BakerHarman} presented an alternative approach to this problem. They used the alternative sieve developed by Harman \cite{Harman1} \cite{Harman2} to reduce $\theta$ to $0.535$. Finally, Baker, Harman and Pintz \cite{BHP} further developed this sieve process and combined it with Watt's theorem and showed $\theta=0.525$.

However, if we only consider the prime numbers in "almost all" intervals instead of "all" intervals, the intervals will be much shorter than $n^{0.525}$. In 1943, under the Riemann Hypothesis, Selberg \cite{Selberg1943} showed that Cramér's interval contains primes for almost all $n$ if $f(n) \to \infty$ as $n \to \infty$.  In the same paper, he also showed unconditionally that the interval $[n, n+n^{\frac{19}{77}+\varepsilon}]$ contains prime numbers for almost all $n$. In 1971, Montgomery \cite{Montgomery} improved the exponent $\frac{19}{77}$ to $\frac{1}{5}$ with an asymptotic formula. The zero density estimate of Huxley \cite{Huxley} gives the exponent $\frac{1}{6}$ with an asymptotic formula, and the best asymptotic result now is also due to Guth and Maynard \cite{GuthMaynard}, where they proved the exponent $\frac{2}{15}$.

In 1982, Harman \cite{Harman110} used his alternative sieve method to showed that the interval $[n, n+n^{\frac{1}{10}+\varepsilon}]$ contains prime numbers for almost all $n$. His method can only provide a lower bound instead of an asymptotic formula. The exponent $\frac{1}{10}$ was reduced successively to
$$
\frac{1}{12}=0.0833,\ \frac{14}{159}=0.0881,\ \frac{1}{13}=0.0769,\ \frac{17}{227}=0.0749,\ \frac{1}{13.5}=0.0740,
$$
$$
\frac{1}{14}=0.0714,\ \frac{1}{15}=0.0667,\ \frac{1}{16}=0.0625,\ \frac{1}{18}=0.0556 \text{ and }\ \frac{1}{20}=0.0500
$$
by Harman \cite{Harman1} (and Heath-Brown \cite{Finding}), Lou and Yao \cite{LouYao1985}, Jia \cite{Jia1131} \cite{Jia1132}, Lou and Yao \cite{LouYao1985}, Li \cite{Li227}, Jia \cite{Jia114} (and Watt \cite{Watt114}), Li \cite{Li115}, Baker, Harman and Pintz \cite{BHP2}, Wong \cite{Wong118} (and Jia \cite{Jia118}, Harman [\cite{HarmanBOOK}, Chapter 9]) and Jia \cite{Jia120} respectively. As the strongest result above, Jia \cite{Jia120} used many powerful tools and arguments in his proof, both analytic and combinatorial, and these are extremely complicated. In this paper, we further develop the sieve machinery used in \cite{Jia120} and obtain the following result.
\begin{theorem}\label{t1}
The interval $[n, n+n^{\frac{1}{21.5}+\varepsilon}]$ contains prime numbers for almost all $n$. Specifically, suppose that $B$ is a sufficiently large positive constant, $\varepsilon$ is a sufficiently small positive constant and $X$ is sufficiently large. Then for positive integers $n \in [X, 2X]$, except for $O\left(X (\log X)^{-B} \right)$ values, the interval $[n, n+n^{\frac{1}{21.5}+\varepsilon}]$ contains $\gg  n^{1/21.5+\varepsilon} (\log n)^{-1}$ prime numbers.
\end{theorem}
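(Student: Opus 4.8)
\emph{Reduction.} The plan is to work with the sifting function $S(\mathcal{A}_x, z) := \#\{ n : x - y < n \le x,\ p \mid n \Rightarrow p \ge z\}$, where $y = x^{1/21 + \varepsilon}$ and $z = (2x)^{1/2}$, so that (up to the negligible contribution of prime powers) $S(\mathcal{A}_x, z)$ equals the number of primes in $(x - y, x]$. It then suffices to exhibit a sub-sum $S^{-} \le S(\mathcal{A}_x, z)$, obtained by discarding certain nonnegative terms in a Buchstab decomposition, for which the average of $S^{-}$ over $x \in [X, 2X]$ is $\gg y/\log x$ while the variance (after dividing by $X$) is $o\big((y/\log x)^2\big)$; Chebyshev's inequality then gives $S(\mathcal{A}_x, z) \ge S^{-} \gg y/\log x$ for all but $O(X(\log X)^{-B})$ integers $n = x$. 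The variance bound is where Dirichlet polynomials enter: for a coefficient sequence $(a_n)$ supported on a dyadic block, the measure of the set of $x$ on which $\sum_{x - y < n \le x} a_n$ departs from its expected value $\tfrac{y}{x}\sum_{n} a_n$ is controlled, via a standard mean-value lemma, by $\int_{T_0}^{X^{\varepsilon}T_0} |A(\tfrac12 + it)|^2 \, dt$ with $T_0 = X/y = X^{20/21 - \varepsilon}$ and $A(s) = \sum_n a_n n^{-s}$.

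\emph{Arithmetic information.} Next I would assemble the asymptotic formulae for the ``smooth'' constituents. A sequence of Type I shape $\sum_{d \le D} c_d \sum_{m} \mathbf{1}$ admits the expected asymptotic for almost all $x$ provided $D$ is not too large, which reduces to a short-interval fourth-moment bound for $\zeta(\tfrac12 + it)$ near $t \asymp X^{20/21}$; a sequence of Type II shape $\sum_{m \sim M}\sum_{n \sim N} a_m b_n$ with $MN \asymp X$ admits it whenever $M$ lies in an interval $[X^{\beta}, X^{1-\beta}]$, which reduces to combining the mean value theorem with Halász--Montgomery large-values estimates for the product $M(\tfrac12 + it) N(\tfrac12 + it)$, supplemented by Watt's hybrid mean value theorem for $\zeta(s)$ times a Dirichlet polynomial to widen the admissible range. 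Obtaining these with sharp exponents — in particular treating the contributions of the different $t$-ranges separately instead of bounding everything by a single crude maximum, which is the force of the ``more accurate estimates for integrals'' — is what determines how large $\theta$ may be taken, and is the analytic core of the argument.

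\emph{Sieve decomposition.} With the arithmetic information in hand, I would run Buchstab's identity iteratively on $S(\mathcal{A}_x, z)$, expanding
\[
S(\mathcal{A}_x, z) = S(\mathcal{A}_x, w) - \sum_{w \le p < z} S(\mathcal{A}_p, p)
\]
and recursing on the $S(\mathcal{A}_{p}, p)$ terms. Every resulting term corresponds to summing a product $p_1 \cdots p_k m$ over the short interval with the $p_i$ confined to explicit dyadic ranges; such a term is \emph{admissible} if the product can be arranged into a Type I or Type II bilinear form covered by the previous step, and \emph{unknown} otherwise. For the lower bound one keeps all admissible terms with their true main term, discards the unknown terms entering with a plus sign, and must still dispose of the unknown terms entering with a minus sign — either by performing further Buchstab steps on the variable obstructing bilinearity, or by a role-reversal comparison with the model sequence $\mathbb{Z}$. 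The \emph{delicate} point is to choose the sieving level $w$, the number of iterations, and the decomposition points so that every surviving minus-term is admissible after the extra steps while the net constant in front of $y/\log x$ stays strictly positive; it is this optimization, together with the slightly wider Type II range coming from the integral estimates above, that sharpens Jia's exponent $\tfrac{1}{20}$ to $\tfrac{1}{21}$.

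\emph{Main obstacle.} I expect the crux to be the unknown minus-terms — products of two or three primes whose sizes land in the ``middle'' ranges inaccessible to either a Type I or a Type II estimate. Each additional Buchstab step applied to them spawns new terms and erodes the positive constant, so the decomposition must be organized to terminate before the constant is exhausted, and this is only possible if the Dirichlet polynomial estimates (a clean short-interval fourth moment for $\zeta$, precise mean values near $\sigma = \tfrac12$ at height $X^{20/21}$, and a usable form of Watt's theorem) are strong enough that these terms fall into an admissible category after exactly one more step. Balancing the sieve combinatorics against the available analytic estimates so that the books close at $\theta = \tfrac{1}{21}$ is where the main difficulty lies.
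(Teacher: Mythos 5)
Your proposal correctly identifies the framework: set up $\mathcal{A}$ as a short interval, compare it with a long model interval $\mathcal{B}$ via a variance/large-sieve argument expressed through mean values of Dirichlet polynomials near $t\asymp X^{20/21}$, assemble Type--I and Type--II arithmetic information (classical mean value, Hal\'asz--Montgomery, and a weighted fourth-moment bound for $\zeta$ of Deshouillers--Iwaniec/Watt type), and then run an iterated Buchstab decomposition, keeping admissible terms and discarding positive non-admissible ones. This is indeed the skeleton of the paper's argument. However, what you have written is a plan, not a proof: every step on which the specific exponent $\tfrac{1}{21}$ depends is left unexecuted. You do not specify, even schematically, the admissible Type--II ranges (the paper's Lemma~3.2 has nine explicit systems of linear inequalities in $\log M/\log X$ and $\log H/\log X$ obtained by case analysis of a minimum of six Hal\'asz/mean-value bounds, followed by H\"older interpolations with carefully chosen weights), nor the Type--I ranges coming from the Deshouillers--Iwaniec fourth moment (Lemma~4.2), nor the explicit regions $U_1,\dots,U_8$ that make the decomposition close with a positive numerical constant. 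These are exactly the ``delicate sieve decomposition and more accurate estimates for integrals'' advertised in the abstract, and without them there is no way to tell whether the constant survives; indeed, naive Buchstab iteration from the same starting point would exhaust the constant well before reaching $\theta=\tfrac{1}{21}$.

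There is also one concrete divergence from the paper's route. You suggest treating the obstinate minus-terms ``by a role-reversal comparison with the model sequence $\mathbb{Z}$.'' The paper explicitly does \emph{not} use role-reversals (this is stated in Section~9 as a direction for future improvement); instead it applies Buchstab's identity \emph{in reverse} on the remaining sums $\Sigma_0$ and $\Sigma_7$, re-aggregating quadruple and quintuple prime-variable sums so that a fresh Type--II arrangement (via the admissible grouping $(t_i,h)$ or $(h,t_i)\in U_1$) becomes available, and it also exploits extra two-dimensional Type--I information $\{M\ll X^{11/21},\ H\ll X^{5/42}\}$ together with Iwaniec's linear sieve to handle the Type--I contributions $S(\mathcal{A}_{p_1p_2},X^{\delta})$ with level $D=X^{11/21}/(p_1p_2)$. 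So the combinatorial devices that actually carry the argument to $\tfrac{1}{21}$ differ from the ones you propose; you would need to either supply a genuine role-reversal step (and check it is compatible with the available arithmetic information at this exponent, which the paper does not claim) or replace it with the reverse-Buchstab/extra-Type--I machinery the paper actually uses.
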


Throughout this paper, we always suppose that $B$ is a sufficiently large positive constant, $\varepsilon$ is a sufficiently small positive constant, $X$ is sufficiently large and $x \in [X, 2X]$. The letter $p$, with or without subscript, is reserved for prime numbers. Let $c_0$, $c_1$ and $c_2$ denote positive constants which may have different values at different places, and we write $m \sim M$ to mean that $c_1 M<m \leqslant c_2 M$. Let $b=1+\frac{1}{\log X}$, $\delta = \varepsilon^{1/3}$ and $\eta = \frac{1}{2} X^{-\frac{20.5}{21.5}+\varepsilon}$. We use $M(s)$, $N(s)$, $H(s)$ and some other capital letters to denote $1$-bounded Dirichlet polynomials
$$
M(s)=\sum_{m \sim M} a_{m} m^{-s}, \quad N(s)=\sum_{n \sim N} b_{n} n^{-s}, \quad H(s)=\sum_{h \sim H} c_{h} h^{-s},
$$
and we use $L(s)$ to denote a "zeta-factor"
$$
L(s)=\sum_{l \sim L} l^{-s}.
$$

\section{An outline of the proof}
Let $\mathcal{C}$ denote a finite set of positive integers, $p_{j}=X^{t_j}$ in the following sections and put
$$
\mathcal{A}=\{n: x<n \leqslant x+\eta x\}, \quad \mathcal{B}=\{n: x<n \leqslant 2 x\},
$$
$$
\mathcal{C}_d=\{a: a d \in \mathcal{C}\}, \quad P(z)=\prod_{p<z} p, \quad S(\mathcal{C}, z)=\sum_{\substack{a \in \mathcal{C} \\ (a, P(z))=1}} 1.
$$
Then we have
\begin{equation}
\pi(x+\eta x)-\pi(x)=S\left(\mathcal{A},(2 X)^{\frac{1}{2}}\right).
\end{equation}

\textit{Buchstab's identity} is the equation
$$
S\left(\mathcal{C}, z\right) = S\left(\mathcal{C}, w\right) - \sum_{w \leqslant p < z} S\left(\mathcal{C}_{p}, p\right),
$$
where $2 \leqslant w < z$.

In order to prove Theorem~\ref{t1}, we only need to show that $S\left(\mathcal{A},(2 X)^{\frac{1}{2}}\right) >0$. By Buchstab's identity, we have
\begin{align}
\nonumber S\left(\mathcal{A}, (2 X)^{\frac{1}{2}}\right) =&\ S\left(\mathcal{A}, X^{\frac{79}{817}}\right) - \sum_{\frac{79}{817} \leqslant t_1 < \frac{1}{2}} S\left(\mathcal{A}_{p_1}, X^{\frac{79}{817}}\right) \\
\nonumber &+ \sum_{\substack{\frac{79}{817} \leqslant t_1 < \frac{1}{2} \\ \frac{79}{817} \leqslant t_2 < \min \left(t_1, \frac{1}{2}(1 - t_1) \right) }} S\left(\mathcal{A}_{p_1 p_2}, p_2\right) \\
=&\ S_1 - S_2 + S_3.
\end{align}
Our aim is to show that the sparser set $\mathcal{A}$ contains the expected proportion of primes compared to the larger set $\mathcal{B}$, which requires us to decompose $S\left(\mathcal{A}, (2 X)^{\frac{1}{2}}\right)$ using the above Buchstab's identity, prove asymptotic formulas (for almost $x \in [X, 2X]$ except for $O\left(X (\log X)^{-B} \right)$ values) of the form
\begin{equation}
S\left(\mathcal{A}, z\right) = \eta (1+o(1)) S\left(\mathcal{B}, z\right)
\end{equation}
for some parts of it, and drop the remaining positive parts. 

In Sections 3 and 4 we provide some arithmetic information by proving mean value bounds for Dirichlet polynomials, and we shall use them to prove asymptotic formulas for terms of the form $S\left(\mathcal{A}_{p_{1} \ldots p_{n}}, X^{\delta}\right)$ and $S\left(\mathcal{A}_{p_{1} \ldots p_{n}}, p_{n} \right)$ in Section 5. In Section 6 we make further use of Buchstab's identity to decompose $S\left(\mathcal{A}, (2 X)^{\frac{1}{2}}\right)$ and prove Theorem~\ref{t1}.

\section{Arithmetic Information I}
In the following two sections we provide some arithmetic information which will help us prove the asymptotic formulas for sieve functions. In this section we only use the classical mean value estimate and Halász method. In next section we will provide a crucial result which will help us get asymptotic formulas for more sieve functions.

\begin{lemma}\label{l31}
Suppose that $M H = X$, $b_h = \Lambda(h)$ and $X^{\delta} \ll H \ll X^{\frac{79}{817}}$. Then for $(\log X)^{B/ \varepsilon} \leqslant T \leqslant X$, we have
$$
\left( \min \left(\eta, \frac{1}{T}\right) \right)^{2} \int_{T}^{2 T}|M(b+i t) H(b+i t)|^{2} d t \ll \eta^{2} (\log x)^{-10 B}.
$$
\end{lemma}
\begin{proof}
The proof is similar to that of [\cite{Jia120}, Lemma 1]. Let $s=b+i t$ and by the zero-free region of the $\zeta$ function, for $|t| \leqslant 2 X$ we have
$$
H(s)=\frac{\left(c_{2} H\right)^{1-s}-\left(c_{1} H\right)^{1-s}}{1-s}+O\left( (\log x)^{-2B/ \varepsilon} \right).
$$
So, for $T_{0} \leq|t| \leq 2 X$ we have $H(s) \ll (\log x)^{-B/ \varepsilon}$. According to the discussion in , there are $O\left( (\log X)^{2} \right)$ sets $S(V, W)$, where $S(V, W)$ is the set of $t_{k}(k=1, \ldots, K)$ with the property $\left|t_{r}-t_{s}\right| \geqslant 1$ $(r \neq s)$. Moreover,
$$
V \leqslant M^{\frac{1}{2}}\left|M\left(b+i t_{k}\right)\right|<2 V, \quad W \leqslant H^{\frac{1}{2}}\left|H\left(b+i t_{k}\right)\right|<2 W,
$$
where $X^{-1} \leqslant M^{-\frac{1}{2}} V, X^{-1} \leqslant H^{-\frac{1}{2}} W$ and $V \ll M^{\frac{1}{2}}, W \ll H^{\frac{1}{2}} (\log x)^{-B/ \varepsilon}$. Then we have
$$
\int_{T}^{2 T}|M(b+i t) H(b+i t)|^{2} d t \ll V^{2} W^{2} x^{-1} (\log x)^{2}|S(V, W)|+O\left(x^{-2 \varepsilon^2}\right),
$$
where $S(V, W)$ is one of sets with the above properties.

Assume $X^{\frac{1}{k+1}} \leqslant H<X^{\frac{1}{k}}$, where $k$ is a positive integer, $k \geqslant 10$ and $k \delta \ll 1$. Applying the mean value estimate to $M(s)$ and $H^{k}(s)$, we have
\begin{align}
\nonumber |S(V, W)| \ll&\ V^{-2}(M+T) (\log x)^{d}, \\
\nonumber |S(V, W)| \ll&\ W^{-2 k}\left(H^{k}+T\right) (\log x)^{d},
\end{align}
where $d=c / \delta^{2}$. Applying the Halász method to $M(s)$ and $H^{k}(s)$, we have
\begin{align}
\nonumber |S(V, W)| \ll&\ \left(V^{-2} M+V^{-6} M T\right) (\log x)^{d}, \\
\nonumber |S(V, W)| \ll&\ \left(W^{-2 k} H^{k}+W^{-6 k} H^{k} T\right) (\log x)^{d}.
\end{align}
Thus,
$$
V^{2} W^{2}|S(V, W)| \ll V^{2} W^{2} F (\log x)^{d},
$$
where
$$
F=\min \left\{V^{-2}(M+T), V^{-2} M+V^{-6} M T, W^{-2 k}(H^{k}+T), W^{-2 k} H^{k}+W^{-6 k} H^{k} T\right\} .
$$

It will be proved that
$$
\left( \min \left(\eta, \frac{1}{T}\right) \right)^{2} V^{2} W^{2} F \ll \eta^{2} x (\log x)^{-B/ \varepsilon}.
$$
We consider four cases.

(a) $F \leqslant 2 V^{-2} M, F \leqslant 2 W^{-2 k} H^{k}$. Then
\begin{align}
\nonumber V^{2} W^{2} F \ll&\ V^{2} W^{2} \min \left\{V^{-2} M, W^{-2 k} H^{k}\right\} \\
\nonumber \leqslant&\ V^{2} W^{2}\left(V^{-2} M\right)^{1-\frac{1}{2 k}}\left(W^{-2 k} H^{k}\right)^{\frac{1}{2 k}} \\
\nonumber =&\ V^{\frac{1}{k}} W M^{1-\frac{1}{2 k}} H^{\frac{1}{2}} \\
\nonumber \ll&\ x (\log x)^{-B/ \varepsilon}
\end{align}
and so
$$
\left( \min \left(\eta, \frac{1}{T}\right) \right)^{2} V^{2} W^{2} F \ll \eta^{2} x (\log x)^{-B/ \varepsilon}.
$$

(b) $F>2 V^{-2} M, F> 2 W^{-2 k} H^{k}$. Then
\begin{align}
\nonumber V^{2} W^{2} F \ll&\ V^{2} W^{2} \min \left\{V^{-2} T, V^{-6} M T, W^{-2 k} T, W^{-6 k} H^{k} T\right\} \\
\nonumber \leqslant&\ V^{2} W^{2}\left(V^{-2}\right)^{1-\frac{3}{2 k}}\left(V^{-6} M\right)^{\frac{1}{2 k}}\left(W^{-2 k}\right)^{\frac{1}{k}} T \\
\nonumber =&\ M^{\frac{1}{2 k}} T.
\end{align}
Since $k \geqslant 10$, we have $H \geqslant X^{\frac{1}{k+1}} \geqslant X^{1-\frac{k}{11}}, M^{\frac{1}{2 k}} \ll X^{\frac{1}{22}}$, and so
$$
\left( \min \left(\eta, \frac{1}{T}\right) \right)^{2} V^{2} W^{2} F \ll \frac{\eta}{T} x^{\frac{1}{22}} T \ll \eta^{2} x^{1-\varepsilon^2}.
$$

(c) $F \leqslant 2 V^{-2} M, F>2 W^{-2 k} H^{k}$. Then
\begin{align}
\nonumber V^{2} W^{2} F \ll&\ V^{2} W^{2} \min \left\{V^{-2} M, W^{-2 k} T, W^{-6 k} H^{k} T\right\} \\
\nonumber \leqslant&\ V^{2} W^{2}\left(V^{-2} M\right)^{1-\frac{1}{3 k}}\left(W^{-6 k} H^{k} T\right)^{\frac{1}{3 k}} \\
\nonumber \ll&\ M H^{\frac{1}{3}} T^{\frac{1}{3 k}},
\end{align}
since $V \ll M^{\frac{1}{2}}$. As $H \geqslant X^{\frac{1}{k+1}} \geqslant X^{\frac{21}{44k}+\varepsilon}$ and $M \leqslant X^{1-\frac{21}{44k}}$, we have
\begin{align}
\nonumber \left( \min \left(\eta, \frac{1}{T}\right) \right)^{2} V^{2} W^{2} F \ll&\ \eta^{2-\frac{1}{3k}} \left(\frac{1}{T}\right)^{\frac{1}{3k}} \left(M H\right)^{\frac{1}{3}} M^{\frac{2}{3}} T^{\frac{1}{3k}} \\
\nonumber \ll&\ x^{\frac{1}{3}-\frac{20.5}{21.5}(2-\frac{1}{3k})+\frac{2}{3}(1-\frac{21}{44k})} \\
\nonumber \ll&\ \eta^{2} x^{1-\varepsilon^2}.
\end{align}

(d) $F>2 V^{-2} M, F \leqslant 2 W^{-2 k} H^{k}$. Then
\begin{align}
\nonumber V^{2} W^{2} F \ll&\ V^{2} W^{2} \min \left\{V^{-2} T, V^{-6} M T, W^{-2 k} H^{k}\right\} \\
\nonumber \leqslant&\ V^{2} W^{2}\left(V^{-2} T\right)^{1-\frac{3}{2 k}}\left(V^{-6} M T\right)^{\frac{1}{2 k}}\left(W^{-2 k} H^{k}\right)^{\frac{1}{k}} \\
\nonumber =&\ M^{\frac{1}{2 k}} H T^{1-\frac{1}{k}}.
\end{align}
If $k \geqslant 11$, then $H \leqslant X^{\frac{1}{k}} \leqslant X^{1-\frac{21(k-1)}{11(2 k-1)}}, M \gg X^{\frac{21(k-1)}{11(2 k-1)}}$, and so
\begin{align}
\nonumber \left( \min \left(\eta, \frac{1}{T}\right) \right)^{2} V^{2} W^{2} F \ll&\ \eta^{1+\frac{1}{k}} \left(\frac{1}{T}\right)^{1-\frac{1}{k}} \left(M H\right) M^{-\frac{2k-1}{2k}} T^{1-\frac{1}{k}} \\
\nonumber \ll&\ x^{1-\frac{20.5}{21.5}(1+\frac{1}{k})-\frac{21(k-1)}{22k}} \\
\nonumber \ll&\ \eta^{2} x^{1-\varepsilon^2}.
\end{align}
If $k=10$, then $X^{\frac{1}{11}} \leqslant H \ll X^{\frac{79}{817}}, M \gg X^{\frac{738}{817}}$, and so
\begin{align}
\nonumber \left( \min \left(\eta, \frac{1}{T}\right) \right)^{2} V^{2} W^{2} F \ll&\ \eta^{\frac{11}{10}} \left(\frac{1}{T}\right)^{\frac{9}{10}} \left(M H\right) M^{-\frac{19}{20}} T^{\frac{9}{10}} \\
\nonumber \ll&\ x^{1-\frac{11}{10} \cdot \frac{20.5}{21.5}-\frac{19}{20} \cdot \frac{738}{817}} \\
\nonumber \ll&\ \eta^{2} x^{1-\varepsilon^2}.
\end{align}

Combining the above cases, Lemma~\ref{l31} is proved.
\end{proof}

\begin{lemma}\label{l32}
Suppose that $M H K=X$, and $M$ and $H$ satisfy one of the following 9 conditions:

(1) $M H \ll X^{\frac{673}{1247}},\ X^{\frac{82}{473}} \ll H,\ M^{29} H^{-1} \ll X^{\frac{427}{43}},\ X^{\frac{246}{817}} \ll M,\ M^{-1} H^{29} \ll X^{\frac{263}{43}},\ X^{\frac{246}{43}} \ll M^{12} H^{11}$;

(2) $M H \ll X^{\frac{223}{387}},\ M^{29} H^{19} \ll X^{\frac{632}{43}},\ X^{\frac{328}{387}} \ll M^{2} H,\ M^{2} H^{11} \ll X^{\frac{120}{43}},\ X^{\frac{1230}{43}} \ll M^{58} H^{49}$;

(3) $M H \ll X^{\frac{268}{473}},\ X^{\frac{41}{215}} \ll H,\ M^{6} H \ll X^{\frac{94}{43}},\ X^{\frac{82}{301}} \ll M \ll X^{\frac{16}{43}},\ M H^{8} \ll X^{\frac{98}{43}},\ X^{\frac{123}{43}} \ll M^{6} H^{5}$;

(4) $M H \ll X^{\frac{571}{817}},\ X^{\frac{82}{473}} \ll H,\ M^{12} H \ll X^{\frac{270}{43}},\ X^{\frac{574}{1247}} \ll M,\ M^{-1} H^{19} \ll X^{\frac{161}{43}},\ X^{\frac{328}{43}} \ll M^{12} H^{11}$;

(5) $M^{2} H \ll X^{\frac{446}{387}},\ M^{58} H^{9} \ll X^{\frac{1264}{43}},\ X^{\frac{164}{387}} \ll M,\ M H^{5} \ll X^{\frac{60}{43}},\ X^{\frac{615}{43}} \ll M^{29} H^{10}$;

(6) $X^{\frac{27}{43}} \ll M H \ll X^{\frac{219}{301}},\ X^{\frac{41}{215}} \ll H,\ M^{6} H \ll X^{\frac{135}{43}},\ X^{\frac{205}{473}} \ll M,\ M^{-1} H^{7} \ll X^{\frac{55}{43}},\ X^{\frac{164}{43}} \ll M^{6} H^{5}$;

(7) $M H \ll X^{\frac{268}{473}},\ M^{35} H^{23} \ll X^{\frac{767}{43}},\ X^{\frac{410}{473}} \ll M^{2} H,\ M^{2} H^{13} \ll X^{\frac{130}{43}},\ X^{\frac{1476}{43}} \ll M^{70} H^{59}$;

(8) $M H \ll X^{\frac{313}{559}},\ M^{41} H^{27} \ll X^{\frac{902}{43}},\ X^{\frac{492}{559}} \ll M^{2} H,\ M^{2} H^{15} \ll X^{\frac{138}{43}},\ X^{\frac{1722}{43}} \ll M^{82} H^{69}$;

(9) $M^{2} H \ll X^{\frac{536}{473}},\ M^{70} H^{11} \ll X^{\frac{1534}{43}},\ X^{\frac{205}{473}} \ll M,\ M H^{6} \ll X^{\frac{65}{43}},\ X^{\frac{738}{43}} \ll M^{35} H^{12}$.

Assume that for $(\log X)^{B/ \varepsilon} \leqslant |t| \leqslant 2 X, M(b+i t) \ll (\log x)^{-B/ \varepsilon}$ and $H(b+i t) \ll (\log x)^{-B/ \varepsilon}$. Then for $(\log X)^{B/ \varepsilon} \leqslant T \leqslant X$, we have
$$
\left( \min \left(\eta, \frac{1}{T}\right) \right)^{2} \int_{T}^{2 T}|M(b+i t) H(b+i t) K(b+i t)|^{2} d t \ll \eta^{2} (\log x)^{-10 B}.
$$
\end{lemma}
\begin{proof}
The proof is similar to that of [\cite{Jia120}, Lemmas 5,7,9,10,12]. Using the method of Lemma~\ref{l31}, we only need to show that for $T=1 / \eta=2 X^{\frac{20.5}{21.5}-\varepsilon}$,
$$
I=\int_{T}^{2 T}|M(b+i t) H(b+i t) K(b+i t)|^{2} d t \ll (\log x)^{-10 B}.
$$

Assuming the condition (1). By applying the mean value estimate and Halász method to $M^{3}(s), H^{5}(s)$ and $K^{2}(s)$, we get
$$
I \ll U^{2} V^{2} W^{2} x^{-1} F (\log x)^{c}
$$
where
\begin{align}
\nonumber F = \min & \left\{ V^{-6}\left(M^{3}+T\right), V^{-6} M^{3}+V^{-18} M^{3} T, W^{-10}\left(H^{5}+T\right), \right. \\
\nonumber & \left. W^{-10} H^{5}+W^{-30} H^{5} T, U^{-4}\left(K^{2}+T\right), U^{-4} K^{2}+U^{-12} K^{2} T \right\} .
\end{align}
We consider 8 cases:

(a) $F \leqslant 2 V^{-6} M^{3}, F \leqslant 2 W^{-10} H^{5}, F \leqslant 2 U^{-4} K^{2}$. Then
\begin{align}
\nonumber U^{2} V^{2} W^{2} F \ll&\ U^{2} V^{2} W^{2} \min \left\{V^{-4} W^{-2} M^{2} H, W^{-10} H^{5}, U^{-4} K^{2}\right\} \\
\nonumber \leqslant&\ U^{2} V^{2} W^{2}\left(V^{-4} W^{-2} M^{2} H\right)^{\frac{3}{8}}\left(W^{-10} H^{5}\right)^{\frac{1}{8}}\left(U^{-4} K^{2}\right)^{\frac{1}{2}} \\
\nonumber =&\ V^{\frac{1}{2}} M^{\frac{3}{4}} H K \\
\nonumber \ll&\ x (\log x)^{-11 B}.
\end{align}

(b) $F \leqslant 2 V^{-6} M^{3}, F \leqslant 2 W^{-10} H^{5}, F>2 U^{-4} K^{2}$. Then
\begin{align}
\nonumber U^{2} V^{2} W^{2} F \ll&\ U^{2} V^{2} W^{2} \min \left\{V^{-6} M^{3}, W^{-10} H^{5}, U^{-4} T, U^{-12} K^{2} T\right\} \\
\nonumber \leqslant&\ U^{2} V^{2} W^{2}\left(V^{-6} M^{3}\right)^{\frac{1}{3}}\left(W^{-10} H^{5}\right)^{\frac{1}{5}}\left(U^{-4} T\right)^{\frac{9}{20}}\left(U^{-12} K^{2} T\right)^{\frac{1}{60}} \\
\nonumber =&\ T^{\frac{7}{15}} M H K^{\frac{1}{30}} \\
\nonumber =&\ T^{\frac{7}{15}} (M H K)^{\frac{1}{30}} \left(M^{\frac{29}{30}} H^{\frac{29}{30}}\right)\\
\nonumber \ll&\ x^{1-\varepsilon^2},
\end{align}
where $M H \ll X^{\frac{673}{1247}}$ is required.

(c) $F \leqslant 2 V^{-6} M^{3}, F>2 W^{-10} H^{5}, F \leqslant 2 U^{-4} K^{2}$. Then
\begin{align}
\nonumber U^{2} V^{2} W^{2} F \ll&\ U^{2} V^{2} W^{2} \min \left\{V^{-6} M^{3}, W^{-10} T, W^{-30} H^{5} T, U^{-4} K^{2}\right\} \\
\nonumber \leqslant&\ U^{2} V^{2} W^{2}\left(V^{-6} M^{3}\right)^{\frac{1}{3}}\left(W^{-10} T\right)^{\frac{3}{20}}\left(W^{-30} H^{5} T\right)^{\frac{1}{60}}\left(U^{-4} K^{2}\right)^{\frac{1}{2}} \\
\nonumber =&\ T^{\frac{1}{6}} M H^{\frac{1}{12}} K \\
\nonumber =&\ T^{\frac{1}{6}} (M H K) H^{-\frac{11}{12}} \\
\nonumber \ll&\ x^{1-\varepsilon^2},
\end{align}
where $X^{\frac{82}{473}} \ll H$ is required.

(d) $F \leqslant 2 V^{-6} M^{3}, F>2 W^{-10} H^{5}, F> 2 U^{-4} K^{2}$. Then
\begin{align}
\nonumber U^{2} V^{2} W^{2} F \ll&\ U^{2} V^{2} W^{2} \min \left\{V^{-6} M^{3}, W^{-10} T, W^{-30} H^{5} T, U^{-4} T, U^{-12} K^{2} T\right\} \\
\nonumber \leqslant&\ U^{2} V^{2} W^{2}\left(V^{-6} M^{3}\right)^{\frac{1}{3}}\left(W^{-10} T\right)^{\frac{1}{5}}\left(U^{-4} T\right)^{\frac{9}{20}}\left(U^{-12} K^{2} T\right)^{\frac{1}{60}} \\
\nonumber =&\ T^{\frac{2}{3}} M K^{\frac{1}{30}} \\
\nonumber =&\ T^{\frac{2}{3}} (M H K)^{\frac{1}{30}} \left(M^{\frac{29}{30}} H^{-\frac{1}{30}}\right)\\
\nonumber \ll&\ x^{1-\varepsilon^2},
\end{align}
where $M^{29} H^{-1} \ll X^{\frac{427}{43}}$ is required.

(e) $F>2 V^{-6} M^{3}, F \leqslant 2 W^{-10} H^{5}, F \leqslant 2 U^{-4} K^{2}$. Then
\begin{align}
\nonumber U^{2} V^{2} W^{2} F \ll&\ U^{2} V^{2} W^{2} \min \left\{V^{-6} T, V^{-18} M^{3} T, W^{-10} H^{5}, U^{-4} K^{2}\right\} \\
\nonumber \leqslant&\ U^{2} V^{2} W^{2}\left(V^{-6} T\right)^{\frac{17}{60}}\left(V^{-18} M^{3} T\right)^{\frac{1}{60}}\left(W^{-10} H^{5}\right)^{\frac{1}{5}}\left(U^{-4} K^{2}\right)^{\frac{1}{2}} \\
\nonumber =&\ T^{\frac{3}{10}} M^{\frac{1}{20}} H K \\
\nonumber =&\ T^{\frac{3}{10}} (M H K) M^{-\frac{19}{20}} \\
\nonumber \ll&\ x^{1-\varepsilon^2},
\end{align}
where $X^{\frac{246}{817}} \ll M$ is required.

(f) $F>2 V^{-6} M^{3}, F \leqslant 2 W^{-10} H^{5}, F>2 U^{-4} K^{2}$. Then
\begin{align}
\nonumber U^{2} V^{2} W^{2} F \ll&\ U^{2} V^{2} W^{2} \min \left\{V^{-6} T, V^{-18} M^{3} T, W^{-10} H^{5}, U^{-4} T, U^{-12} K^{2} T\right\} \\
\nonumber \leqslant&\ U^{2} V^{2} W^{2}\left(V^{-6} T\right)^{\frac{1}{3}}\left(W^{-10} H^{5}\right)^{\frac{1}{5}}\left(U^{-4} T\right)^{\frac{9}{20}}\left(U^{-12} K^{2} T\right)^{\frac{1}{60}} \\
\nonumber =&\ T^{\frac{4}{5}} H K^{\frac{1}{30}} \\
\nonumber =&\ T^{\frac{4}{5}} (M H K)^{\frac{1}{30}} \left(M^{-\frac{1}{30}} H^{\frac{29}{30}} \right)\\
\nonumber \ll&\ x^{1-\varepsilon^2},
\end{align}
where $M^{-1} H^{29} \ll$ $X^{\frac{263}{43}}$ is required.

(g) $F>2 V^{-6} M^{3}, F> 2 W^{-10} H^{5}, F \leqslant 2 U^{-4} K^{2}$. Then
\begin{align}
\nonumber U^{2} V^{2} W^{2} F \ll&\ U^{2} V^{2} W^{2} \min \left\{V^{-6} T, V^{-18} M^{3} T, W^{-10} T, W^{-30} H^{5} T, U^{-4} K^{2}\right\} \\
\nonumber \leqslant&\ U^{2} V^{2} W^{2}\left(V^{-6} T\right)^{\frac{1}{3}}\left(W^{-10} T\right)^{\frac{3}{20}}\left(W^{-30} H^{5} T\right)^{\frac{1}{60}}\left(U^{-4} K^{2}\right)^{\frac{1}{2}} \\
\nonumber =&\ T^{\frac{1}{2}} H^{\frac{1}{12}} K \\
\nonumber =&\ T^{\frac{1}{2}} (M H K) \left( M^{-1} H^{-\frac{11}{12}} \right) \\
\nonumber \ll&\ x^{1-\varepsilon^2},
\end{align}
where $X^{\frac{246}{43}} \ll M^{12} H^{11}$ is required.

(h) $F>2 V^{-6} M^{3}, F> 2 W^{-10} H^{5}, F> 2 U^{-4} K^{2}$. Then
\begin{align}
\nonumber U^{2} V^{2} W^{2} F \ll&\ U^{2} V^{2} W^{2} \min \left\{V^{-6}, V^{-18} M^{3}, W^{-10}, W^{-30} H^{5}, U^{-4}, U^{-12} K^{2}\right\} T \\
\nonumber \leqslant&\ U^{2} V^{2} W^{2}\left(V^{-6}\right)^{\frac{170}{60}}\left(V^{-18} M^{3}\right)^{\frac{1}{60}}\left(W^{-10}\right)^{\frac{1}{5}}\left(U^{-4}\right)^{\frac{1}{2}} T \\
\nonumber =&\ T M^{\frac{1}{20}} \\
\nonumber \ll&\ x^{1-\varepsilon^2},
\end{align}
where $M \ll X^{\frac{5025}{13717}}$ and $T M^{\frac{1}{20}} \ll X^{\frac{20.5}{21.5}+\frac{1}{20} \cdot \frac{5025}{13717}} \ll X^{\frac{53321}{54868}}$ are used. The former one can be obtained by using $M H \ll X^{\frac{673}{1247}}$ and $X^{\frac{82}{473}} \ll H$.

Assuming the condition (2). By applying the mean value estimate and Halász method to $M^{2}(s) H(s), H^{5}(s)$ and $K^{2}(s)$, we get
$$
I \ll U^{2} V^{2} W^{2} x^{-1} F (\log x)^{c}
$$
where
\begin{align}
\nonumber F = \min & \left\{V^{-4} W^{-2}\left(M^{2} H+T\right), V^{-4} W^{-2} M^{2} H+V^{-12} W^{-6} M^{2} H T, \right. \\
\nonumber & \left. W^{-10}\left(H^{5}+T\right), W^{-10} H^{5}+W^{-30} H^{5} T, U^{-4}\left(K^{2}+T\right), U^{-4} K^{2}+U^{-12} K^{2} T\right\}.
\end{align}
We consider 8 cases:

(a) $F \leqslant 2 V^{-4} W^{-2} M^{2} H, F \leqslant 2 W^{-10} H^{5}, F \leqslant 2 U^{-4} K^{2}$. Then
\begin{align}
\nonumber U^{2} V^{2} W^{2} F \ll&\ U^{2} V^{2} W^{2} \min \left\{V^{-4} W^{-2} M^{2} H, W^{-10} H^{5}, U^{-4} K^{2}\right\} \\
\nonumber \leqslant&\ U^{2} V^{2} W^{2}\left(V^{-4} W^{-2} M^{2} H\right)^{\frac{3}{8}}\left(W^{-10} H^{5}\right)^{\frac{1}{8}}\left(U^{-4} K^{2}\right)^{\frac{1}{2}} \\
\nonumber =&\ V^{\frac{1}{2}} M^{\frac{3}{4}} H K \\
\nonumber \ll&\ x (\log x)^{-11 B}.
\end{align}

(b) $F \leqslant 2 V^{-4} W^{-2} M^{2} H, F \leqslant 2 W^{-10} H^{5}, F>2 U^{-4} K^{2}$. Then
\begin{align}
\nonumber U^{2} V^{2} W^{2} F \ll&\ U^{2} V^{2} W^{2} \min \left\{V^{-4} W^{-2} M^{2} H, W^{-10} H^{5}, U^{-4} T, U^{-12} K^{2} T\right\} \\
\nonumber \leqslant&\ U^{2} V^{2} W^{2}\left(V^{-4} W^{-2} M^{2} H\right)^{\frac{1}{2}}\left(W^{-10} H^{5}\right)^{\frac{1}{10}}\left(U^{-4} T\right)^{\frac{7}{20}}\left(U^{-12} K^{2} T\right)^{\frac{1}{20}} \\
\nonumber =&\ T^{\frac{2}{5}} M H K^{\frac{1}{10}} \\
\nonumber =&\ T^{\frac{2}{5}} (M H K)^{\frac{1}{10}} \left(M^{\frac{9}{10}} H^{\frac{9}{10}} \right) \\
\nonumber \ll&\ x^{1-\varepsilon^2},
\end{align}
where $M H \ll X^{\frac{223}{387}}$ is required.

(c) $F \leqslant 2 V^{-4} W^{-2} M^{2} H, F>2 W^{-10} H^{5}, F \leqslant 2 U^{-4} K^{2}$. Then
\begin{align}
\nonumber U^{2} V^{2} W^{2} F \ll&\ U^{2} V^{2} W^{2} \min \left\{V^{-4} W^{-2} M^{2} H, W^{-10} T, W^{-30} H^{5} T, U^{-4} K^{2}\right\} \\
\nonumber \leqslant&\ U^{2} V^{2} W^{2}\left(V^{-4} W^{-2} M^{2} H\right)^{\frac{1}{2}}\left(U^{-4} K^{2}\right)^{\frac{1}{2}} \\
\nonumber =&\ W M H^{\frac{1}{2}} K \\
\nonumber \ll&\ x (\log x)^{-11 B}.
\end{align}

(d) $F \leqslant 2 V^{-4} W^{-2} M^{2} H, F>2 W^{-10} H^{5}, F> 2 U^{-4} K^{2}$. Then
\begin{align}
\nonumber U^{2} V^{2} W^{2} F \ll&\ U^{2} V^{2} W^{2} \min \left\{V^{-4} W^{-2} M^{2} H, W^{-10} T, W^{-30} H^{5} T, U^{-4} T, U^{-12} K^{2} T\right\} \\
\nonumber \leqslant&\ U^{2} V^{2} W^{2}\left(V^{-4} W^{-2} M^{2} H\right)^{\frac{1}{2}}\left(W^{-30} H^{5} T\right)^{\frac{1}{30}}\left(U^{-4} T\right)^{\frac{9}{20}}\left(U^{-12} K^{2} T\right)^{\frac{1}{10}} \\
\nonumber =&\ T^{\frac{1}{2}} M H^{\frac{2}{3}} K^{\frac{1}{30}} \\
\nonumber =&\ T^{\frac{1}{2}} (M H K)^{\frac{1}{30}} \left(M^{\frac{29}{30}} H^{\frac{19}{30}}\right) \\
\nonumber \ll&\ x^{1-\varepsilon^2},
\end{align}
where $M^{29} H^{19} \ll X^{\frac{632}{43}}$ is required.

(e) $F>2 V^{-4} W^{-2} M^{2} H, F \leqslant 2 W^{-10} H^{5}, F \leqslant 2 U^{-4} K^{2}$. Then
\begin{align}
\nonumber U^{2} V^{2} W^{2} F \ll&\ U^{2} V^{2} W^{2} \min \left\{V^{-4} W^{-2} T, V^{-12} W^{-6} M^{2} H T, W^{-10} H^{5}, U^{-4} K^{2}\right\} \\
\nonumber \leqslant&\ U^{2} V^{2} W^{2}\left(V^{-4} W^{-2} T\right)^{\frac{7}{20}}\left(V^{-12} W^{-6} M^{2} H T\right)^{\frac{1}{20}}\left(W^{-10} H^{5}\right)^{\frac{1}{10}}\left(U^{-4} K^{2}\right)^{\frac{1}{2}} \\
\nonumber =&\ T^{\frac{2}{5}} M^{\frac{1}{10}} H^{\frac{11}{20}} K \\
\nonumber =&\ T^{\frac{2}{5}} (M H K) \left( M^{-\frac{9}{10}} H^{-\frac{9}{20}} \right)\\
\nonumber \ll&\ x^{1-\varepsilon^2},
\end{align}
where $X^{\frac{328}{387}} \ll M^{2} H$ is required.

(f) $F>2 V^{-4} W^{-2} M^{2} H, F \leqslant 2 W^{-10} H^{5}, F>2 U^{-4} K^{2}$. Then
\begin{align}
\nonumber U^{2} V^{2} W^{2} F \ll&\ U^{2} V^{2} W^{2} \min \left\{V^{-4} W^{-2} T, V^{-12} W^{-6} M^{2} H T, W^{-10} H^{5}, U^{-4} T, U^{-12} K^{2} T\right\} \\
\nonumber \leqslant&\ U^{2} V^{2} W^{2}\left(V^{-4} W^{-2} T\right)^{\frac{7}{20}}\left(V^{-12} W^{-6} M^{2} H T\right)^{\frac{1}{20}}\left(W^{-10} H^{5}\right)^{\frac{1}{10}}\left(U^{-4} T\right)^{\frac{1}{2}} \\
\nonumber =&\ T^{\frac{9}{10}} \left( M^{\frac{1}{10}} H^{\frac{11}{20}} \right)\\
\nonumber \ll&\ x^{1-\varepsilon^2},
\end{align}
where $M^{2} H^{11} \ll X^{\frac{122}{43}}$ is required. This can be obtained by using $M^{2} H^{11} \ll X^{\frac{120}{43}}$.

(g) $F>2 V^{-4} W^{-2} M^{2} H, F> 2 W^{-10} H^{5}, F \leqslant 2 U^{-4} K^{2}$. Then
\begin{align}
\nonumber U^{2} V^{2} W^{2} F \ll&\ U^{2} V^{2} W^{2} \min \left\{V^{-4} W^{-2} T, V^{-12} W^{-6} M^{2} H T, W^{-10} T, W^{-30} H^{5} T, U^{-4} K^{2}\right\} \\
\nonumber \leqslant&\ U^{2} V^{2} W^{2}\left(V^{-4} W^{-2} T\right)^{\frac{9}{20}}\left(V^{-12} W^{-6} M^{2} H T\right)^{\frac{1}{60}}\left(W^{-30} H^{5} T\right)^{\frac{1}{30}}\left(U^{-4} K^{2}\right)^{\frac{1}{2}} \\
\nonumber =&\ T^{\frac{1}{2}} M^{\frac{1}{30}} H^{\frac{11}{60}} K \\
\nonumber =&\ T^{\frac{1}{2}} (M H K) \left( M^{-\frac{29}{30}} H^{-\frac{49}{60}} \right) \\
\nonumber \ll&\ x^{1-\varepsilon^2},
\end{align}
where $X^{\frac{1230}{43}} \ll M^{58} H^{49}$ is required.

(h) $F>2 V^{-4} W^{-2} M^{2} H, F> 2 W^{-10} H^{5}, F> 2 U^{-4} K^{2}$. Then
\begin{align}
\nonumber U^{2} V^{2} W^{2} F \ll&\ U^{2} V^{2} W^{2} \min \left\{V^{-4} W^{-2}, V^{-12} W^{-6} M^{2} H, W^{-10}, W^{-30} H^{5}, U^{-4}, U^{-12} K^{2} \right\} T \\
\nonumber \leqslant&\ U^{2} V^{2} W^{2}\left(V^{-4} W^{-2}\right)^{\frac{9}{20}}\left(V^{-12} W^{-6} M^{2} H\right)^{\frac{1}{60}}\left(W^{-30} H^{5}\right)^{\frac{1}{30}}\left(U^{-4}\right)^{\frac{1}{2}} T \\
\nonumber =&\ T \left( M^{\frac{1}{30}} H^{\frac{11}{60}} \right) \\
\nonumber \ll&\ x^{1-\varepsilon^2},
\end{align}
where $M^{2} H^{11} \ll X^{\frac{120}{43}}$ is required.

Assuming the condition (3). By applying the mean value estimate and Halász method to $M^{3}(s), H^{4}(s)$ and $K^{2}(s)$, we get
$$
I \ll U^{2} V^{2} W^{2} x^{-1} F (\log x)^{c}
$$
where
\begin{align}
\nonumber F = \min & \left\{ V^{-6}\left(M^{3}+T\right), V^{-6} M^{3}+V^{-18} M^{3} T, W^{-8}\left(H^{4}+T\right), \right. \\
\nonumber & \left. W^{-8} H^{4} + W^{-24} H^{4} T, U^{-4}\left(K^{2}+T\right), U^{-4} K^{2}+U^{-12} K^{2} T\right\}.
\end{align}
We consider 8 cases:

(a) $F \leqslant 2 V^{-6} M^{3}, F \leqslant 2 W^{-8} H^{4}, F \leqslant 2 U^{-4} K^{2}$. Then
\begin{align}
\nonumber U^{2} V^{2} W^{2} F \ll&\ U^{2} V^{2} W^{2} \min \left\{V^{-6} M^{3}, W^{-8} H^{4}, U^{-4} K^{2}\right\} \\
\nonumber \leqslant&\ U^{2} V^{2} W^{2}\left(V^{-6} M^{3}\right)^{\frac{1}{4}}\left(W^{-8} H^{4}\right)^{\frac{1}{4}}\left(U^{-4} K^{2}\right)^{\frac{1}{2}} \\
\nonumber =&\ V^{\frac{1}{2}} M^{\frac{3}{4}} H K \\
\nonumber \ll&\ x (\log x)^{-11 B}.
\end{align}

(b) $F \leqslant 2 V^{-6} M^{3}, F \leqslant 2 W^{-8} H^{4}, F>2 U^{-4} K^{2}$. Then
\begin{align}
\nonumber U^{2} V^{2} W^{2} F \ll&\ U^{2} V^{2} W^{2} \min \left\{V^{-6} M^{3}, W^{-8} H^{4}, U^{-4} T, U^{-12} K^{2} T\right\} \\
\nonumber \leqslant&\ U^{2} V^{2} W^{2}\left(V^{-6} M^{3}\right)^{\frac{1}{3}}\left(W^{-8} H^{4}\right)^{\frac{1}{4}}\left(U^{-4} T\right)^{\frac{3}{8}}\left(U^{-12} K^{2} T\right)^{\frac{1}{24}} \\
\nonumber =&\ T^{\frac{5}{12}} M H K^{\frac{1}{12}} \\
\nonumber =&\ T^{\frac{5}{12}} (M H K)^{\frac{1}{12}} \left(M^{\frac{11}{12}} H^{\frac{11}{12}} \right) \\
\nonumber \ll&\ x^{1-\varepsilon^2},
\end{align}
where $M H \ll X^{\frac{268}{473}}$ is required.

(c) $F \leqslant 2 V^{-6} M^{3}, F>2 W^{-8} H^{4}, F \leqslant 2 U^{-4} K^{2}$. Then
\begin{align}
\nonumber U^{2} V^{2} W^{2} F \ll&\ U^{2} V^{2} W^{2} \min \left\{V^{-6} M^{3}, W^{-8} T, W^{-24} H^{4} T, U^{-4} K^{2}\right\} \\
\nonumber \leqslant&\ U^{2} V^{2} W^{2}\left(V^{-6} M^{3}\right)^{\frac{1}{3}}\left(W^{-8} T\right)^{\frac{1}{8}}\left(W^{-24} H^{4} T\right)^{\frac{1}{24}}\left(U^{-4} K^{2}\right)^{\frac{1}{2}} \\
\nonumber =&\ T^{\frac{1}{6}} M H^{\frac{1}{6}} K \\
\nonumber =&\ T^{\frac{1}{6}} (M H K) H^{-\frac{5}{6}} \\
\nonumber \ll&\ x^{1-\varepsilon^2},
\end{align}
where $X^{\frac{41}{215}} \ll H$ is required.

(d) $F \leqslant 2 V^{-6} M^{3}, F>2 W^{-8} H^{4}, F>2 U^{-4} K^{2}$. Then
\begin{align}
\nonumber U^{2} V^{2} W^{2} F \ll&\ U^{2} V^{2} W^{2} \min \left\{V^{-6} M^{3}, W^{-8} T, W^{-24} H^{4} T, U^{-4} T, U^{-12} K^{2} T\right\} \\
\nonumber \leqslant&\ U^{2} V^{2} W^{2}\left(V^{-6} M^{3}\right)^{\frac{1}{3}}\left(W^{-8} T\right)^{\frac{1}{8}}\left(W^{-24} H^{4} T\right)^{\frac{1}{24}}\left(U^{-4} T\right)^{\frac{1}{2}} \\
\nonumber =&\ T^{\frac{2}{3}} \left( M H^{\frac{1}{6}} \right) \\
\nonumber \ll&\ x^{1-\varepsilon^2},
\end{align}
where $M^{6} H \ll X^{\frac{94}{43}}$ is required.

(e) $F>2 V^{-6} M^{3}, F \leqslant 2 W^{-8} H^{4}, F \leqslant 2 U^{-4} K^{2}$. Then
\begin{align}
\nonumber U^{2} V^{2} W^{2} F \ll&\ U^{2} V^{2} W^{2} \min \left\{V^{-6} T, V^{-18} M^{3} T, W^{-8} H^{4}, U^{-4} K^{2}\right\} \\
\nonumber \leqslant&\ U^{2} V^{2} W^{2}\left(V^{-6} T\right)^{\frac{5}{24}}\left(V^{-18} M^{3} T\right)^{\frac{1}{24}}\left(W^{-8} H^{4}\right)^{\frac{1}{4}}\left(U^{-4} K^{2}\right)^{\frac{1}{2}} \\
\nonumber =&\ T^{\frac{1}{4}} M^{\frac{1}{8}} H K \\
\nonumber =&\ T^{\frac{1}{4}} (M H K) M^{-\frac{7}{8}} \\
\nonumber \ll&\ x^{1-\varepsilon^2},
\end{align}
where $X^{\frac{82}{301}} \ll M$ is required.

(f) $F>2 V^{-6} M^{3}, F \leqslant 2 W^{-8} H^{4}, F>2 U^{-4} K^{2}$. Then
\begin{align}
\nonumber U^{2} V^{2} W^{2} F \ll&\ U^{2} V^{2} W^{2} \min \left\{V^{-6} T, V^{-18} M^{3} T, W^{-8} H^{4}, U^{-4} T, U^{-12} K^{2} T\right\} \\
\nonumber \leqslant&\ U^{2} V^{2} W^{2}\left(V^{-6} T\right)^{\frac{5}{24}}\left(V^{-18} M^{3} T\right)^{\frac{1}{24}}\left(W^{-8} H^{4}\right)^{\frac{1}{4}}\left(U^{-4} T\right)^{\frac{1}{2}} \\
\nonumber =&\ T^{\frac{3}{4}} \left( M^{\frac{1}{8}} H \right) \\
\nonumber \ll&\ x^{1-\varepsilon^2},
\end{align}
where $M H^{8} \ll X^{\frac{98}{43}}$ is required.

(g) $F>2 V^{-6} M^{3}, F> 2 W^{-8} H^{4}, F \leqslant 2 U^{-4} K^{2}$. Then
\begin{align}
\nonumber U^{2} V^{2} W^{2} F \ll&\ U^{2} V^{2} W^{2} \min \left\{V^{-6} T, V^{-18} M^{3} T, W^{-8} T, W^{-24} H^{4} T, U^{-4} K^{2}\right\} \\
\nonumber \leqslant&\ U^{2} V^{2} W^{2}\left(V^{-6} T\right)^{\frac{1}{3}}\left(W^{-8} T\right)^{\frac{1}{8}}\left(W^{-24} H^{4} T\right)^{\frac{1}{24}}\left(U^{-4} K^{2}\right)^{\frac{1}{2}} \\
\nonumber =&\ T^{\frac{1}{2}} H^{\frac{1}{6}} K \\
\nonumber =&\ T^{\frac{1}{2}} (M H K) \left( M^{-1} H^{-\frac{5}{6}} \right) \\
\nonumber \ll&\ x^{1-\varepsilon^2},
\end{align}
where $X^{\frac{123}{43}} \ll M^{6} H^{5}$ is required.

(h) $F>2 V^{-6} M^{3}, F> 2 W^{-8} H^{4}, F> 2 U^{-4} K^{2}$. Then
\begin{align}
\nonumber U^{2} V^{2} W^{2} F \ll&\ U^{2} V^{2} W^{2} \min \left\{V^{-6}, V^{-18} M^{3}, W^{-8}, W^{-24} H^{4}, U^{-4}, U^{-12} K^{2}\right\} T \\
\nonumber \leqslant&\ U^{2} V^{2} W^{2}\left(V^{-6}\right)^{\frac{5}{24}}\left(V^{-18} M^{3}\right)^{\frac{1}{24}}\left(W^{-8}\right)^{\frac{1}{4}}\left(U^{-4}\right)^{\frac{1}{2}} T \\
\nonumber =&\ T M^{\frac{1}{8}} \\
\nonumber \ll&\ x^{1-\varepsilon^2},
\end{align}
where $M \ll X^{\frac{16}{43}}$ is required.

Assuming the condition (4). By applying the mean value estimate and Halász method to $M^{2}(s), H^{5}(s)$ and $K^{3}(s)$, we get
$$
I \ll U^{2} V^{2} W^{2} x^{-1} F (\log x)^{c}
$$
where
\begin{align}
\nonumber F = \min & \left\{ V^{-4}\left(M^{2}+T\right), V^{-4} M^{2}+V^{-12} M^{2} T, W^{-10}\left(H^{5}+T\right), \right. \\
\nonumber & \left. W^{-10} H^{5} + W^{-30} H^{5} T, U^{-6}\left(K^{3}+T\right), U^{-6} K^{3}+U^{-18} K^{3} T\right\}.
\end{align}
We consider 8 cases:

(a) $F \leqslant 2 V^{-4} M^{2}, F \leqslant 2 W^{-10} H^{5}, F \leqslant 2 U^{-6} K^{3}$. Then
\begin{align}
\nonumber U^{2} V^{2} W^{2} F \ll&\ U^{2} V^{2} W^{2} \min \left\{V^{-4} M^{2}, W^{-10} H^{5}, U^{-6} K^{3}\right\} \\
\nonumber \leqslant&\ U^{2} V^{2} W^{2}\left(V^{-4} M^{2}\right)^{\frac{1}{2}}\left(W^{-10} H^{5}\right)^{\frac{1}{6}}\left(U^{-6} K^{3}\right)^{\frac{1}{3}} \\
\nonumber =&\ W^{\frac{1}{3}} M H^{\frac{5}{6}} K \\
\nonumber \ll&\ x (\log x)^{-11 B}.
\end{align}

(b) $F \leqslant 2 V^{-4} M^{2}, F \leqslant 2 W^{-10} H^{5}, F>2 U^{-6} K^{3}$. Then
\begin{align}
\nonumber U^{2} V^{2} W^{2} F \ll&\ U^{2} V^{2} W^{2} \min \left\{V^{-4} M^{2}, W^{-10} H^{5}, U^{-6} T, U^{-18} K^{3} T\right\} \\
\nonumber \leqslant&\ U^{2} V^{2} W^{2}\left(V^{-4} M^{2}\right)^{\frac{1}{2}}\left(W^{-10} H^{5}\right)^{\frac{1}{5}}\left(U^{-6} T\right)^{\frac{17}{60}}\left(U^{-18} K^{3} T\right)^{\frac{1}{60}} \\
\nonumber =&\ T^{\frac{3}{10}} M H K^{\frac{1}{20}} \\
\nonumber =&\ T^{\frac{3}{10}} (M H K)^{\frac{1}{20}} \left(M^{\frac{19}{20}} H^{\frac{19}{20}} \right) \\
\nonumber \ll&\ x^{1-\varepsilon^2},
\end{align}
where $M H \ll X^{\frac{571}{817}}$ is required.

(c) $F \leqslant 2 V^{-4} M^{2}, F>2 W^{-10} H^{5}, F \leqslant 2 U^{-6} K^{3}$. Then
\begin{align}
\nonumber U^{2} V^{2} W^{2} F \ll&\ U^{2} V^{2} W^{2} \min \left\{V^{-4} M^{2}, W^{-10} T, W^{-30} H^{5} T, U^{-6} K^{3}\right\} \\
\nonumber \leqslant&\ U^{2} V^{2} W^{2}\left(V^{-4} M^{2}\right)^{\frac{1}{2}}\left(W^{-10} T\right)^{\frac{3}{20}}\left(W^{-30} H^{5} T\right)^{\frac{1}{60}}\left(U^{-6} K^{3}\right)^{\frac{1}{3}} \\
\nonumber =&\ T^{\frac{1}{6}} M H^{\frac{1}{12}} K \\
\nonumber =&\ T^{\frac{1}{6}} (M H K) H^{-\frac{11}{12}} \\
\nonumber \ll&\ x^{1-\varepsilon^2},
\end{align}
where $X^{\frac{82}{473}} \ll H$ is required.

(d) $F \leqslant 2 V^{-4} M^{2}, F>2 W^{-10} H^{5}, F> 2 U^{-6} K^{3}$. Then
\begin{align}
\nonumber U^{2} V^{2} W^{2} F \ll&\ U^{2} V^{2} W^{2} \min \left\{V^{-4} M^{2}, W^{-10} T, W^{-30} H^{5} T, U^{-6} T, U^{-18} K^{3} T\right\} \\
\nonumber \leqslant&\ U^{2} V^{2} W^{2}\left(V^{-4} M^{2}\right)^{\frac{1}{2}}\left(W^{-10} T\right)^{\frac{3}{20}}\left(W^{-30} H^{5} T\right)^{\frac{1}{60}}\left(U^{-6} T\right)^{\frac{1}{3}} \\
\nonumber =&\ T^{\frac{1}{2}} \left( M H^{\frac{1}{12}} \right) \\
\nonumber \ll&\ x^{1-\varepsilon^2},
\end{align}
where $M^{12} H \ll X^{\frac{270}{43}}$ is required.

(e) $F>2 V^{-4} M^{2}, F \leqslant 2 W^{-10} H^{5}, F \leqslant 2 U^{-6} K^{3}$. Then
\begin{align}
\nonumber U^{2} V^{2} W^{2} F \ll&\ U^{2} V^{2} W^{2} \min \left\{V^{-4} T, V^{-12} M^{2} T, W^{-10} H^{5}, U^{-6} K^{3}\right\} \\
\nonumber \leqslant&\ U^{2} V^{2} W^{2}\left(V^{-4} T\right)^{\frac{9}{20}}\left(V^{-12} M^{2} T\right)^{\frac{1}{60}}\left(W^{-10} H^{5}\right)^{\frac{1}{5}}\left(U^{-6} K^{3}\right)^{\frac{1}{3}} \\
\nonumber =&\ T^{\frac{7}{15}} M^{\frac{1}{30}} H K \\
\nonumber =&\ T^{\frac{7}{15}} (M H K) M^{-\frac{29}{30}} \\
\nonumber \ll&\ x^{1-\varepsilon^2},
\end{align}
where $X^{\frac{574}{1247}} \ll M$ is required.

(f) $F>2 V^{-4} M^{2}, F \leqslant 2 W^{-10} H^{5}, F>2 U^{-6} K^{3}$. Then
\begin{align}
\nonumber U^{2} V^{2} W^{2} F \ll&\ U^{2} V^{2} W^{2} \min \left\{V^{-4} T, V^{-12} M^{2} T, W^{-10} H^{5}, U^{-6} T, U^{-18} K^{3} T\right\} \\
\nonumber \leqslant&\ U^{2} V^{2} W^{2}\left(V^{-4} T\right)^{\frac{1}{2}}\left(W^{-10} H^{5}\right)^{\frac{1}{5}}\left(U^{-6} T\right)^{\frac{17}{60}}\left(U^{-18} K^{3} T\right)^{\frac{1}{60}} \\
\nonumber =&\ T^{\frac{4}{5}} H K^{\frac{1}{20}} \\
\nonumber =&\ T^{\frac{4}{5}} (M H K)^{\frac{1}{20}} \left(M^{-\frac{1}{20}} H^{\frac{19}{20}} \right) \\
\nonumber \ll&\ x^{1-\varepsilon^2},
\end{align}
where $M^{-1} H^{19} \ll X^{\frac{161}{43}}$ is required.

(g) $F>2 V^{-4} M^{2}, F> 2 W^{-10} H^{5}, F \leqslant 2 U^{-6} K^{3}$. Then
\begin{align}
\nonumber U^{2} V^{2} W^{2} F \ll&\ U^{2} V^{2} W^{2} \min \left\{V^{-4} T, V^{-12} M^{2} T, W^{-10} T, W^{-30} H^{5} T, U^{-6} K^{3}\right\} \\
\nonumber \leqslant&\ U^{2} V^{2} W^{2}\left(V^{-4} T\right)^{\frac{1}{2}}\left(W^{-10} T\right)^{\frac{3}{20}}\left(W^{-30} H^{5} T\right)^{\frac{1}{60}}\left(U^{-6} K^{3}\right)^{\frac{1}{3}} \\
\nonumber =&\ T^{\frac{2}{3}} H^{\frac{1}{12}} K \\
\nonumber =&\ T^{\frac{2}{3}} (M H K) \left( M^{-1} H^{-\frac{11}{12}} \right) \\
\nonumber \ll&\ x^{1-\varepsilon^2},
\end{align}
where $X^{\frac{328}{43}} \ll M^{12} H^{11}$ is required.

(h) $F>2 V^{-4} M^{2}, F> 2 W^{-10} H^{5}, F> 2 U^{-6} K^{3}$. Then
\begin{align}
\nonumber U^{2} V^{2} W^{2} F \ll&\ U^{2} V^{2} W^{2} \min \left\{V^{-4}, V^{-12} M^{2}, W^{-10}, W^{-30} H^{5}, U^{-6}, U^{-18} K^{3}\right\} T \\
\nonumber \leqslant&\ U^{2} V^{2} W^{2}\left(V^{-4}\right)^{\frac{1}{2}}\left(W^{-10}\right)^{\frac{1}{5}}\left(U^{-6}\right)^{\frac{17}{60}}\left(U^{-18} K^{3}\right)^{\frac{1}{60}} T \\
\nonumber =&\ T K^{\frac{1}{20}} \\
\nonumber =&\ T (M H K)^{\frac{1}{20}} \left(M^{-\frac{1}{20}} H^{-\frac{1}{20}} \right)\\
\nonumber \ll&\ x^{1-\varepsilon^2},
\end{align}
where $X^{\frac{3}{43}} \ll M H$ is required. This can be obtained by using $X^{\frac{574}{1247}} \ll M$ and $X^{\frac{82}{473}} \ll H$.

Assuming the condition (5). By applying the mean value estimate and Halász method to $M^{2}(s), H^{5}(s)$ and $K^{2}(s) H(s)$, we get
$$
I \ll U^{2} V^{2} W^{2} x^{-1} F (\log x)^{c}
$$
where
\begin{align}
\nonumber F = \min & \left\{ V^{-4}\left(M^{2}+T\right), V^{-4} M^{2}+V^{-12} M^{2} T, W^{-10}\left(H^{5}+T\right), \right. \\
\nonumber & \left. W^{-10} H^{5}+W^{-30} H^{5} T, U^{-4} W^{-2}\left(K^{2} H+T\right), U^{-4} W^{-2} K^{2} H +U^{-12} W^{-6} K^{2} H T\right\}.
\end{align}
We consider 8 cases:

(a) $F \leqslant 2 V^{-4} M^{2}, F \leqslant 2 W^{-10} H^{5}, F \leqslant 2 U^{-4} W^{-2} K^{2} H$. Then
\begin{align}
\nonumber U^{2} V^{2} W^{2} F \ll&\ U^{2} V^{2} W^{2} \min \left\{V^{-4} M^{2}, W^{-10} H^{5}, U^{-4} W^{-2} K^{2} H\right\} \\
\nonumber \leqslant&\ U^{2} V^{2} W^{2}\left(V^{-4} M^{2}\right)^{\frac{1}{2}}\left(U^{-4} W^{-2} K^{2} H\right)^{\frac{1}{2}} \\
\nonumber =&\ W M H^{\frac{1}{2}} K \\
\nonumber \ll&\ x (\log x)^{-11 B}.
\end{align}

(b) $F \leqslant 2 V^{-4} M^{2}, F \leqslant 2 W^{-10} H^{5}, F>2 U^{-4} W^{-2} K^{2} H$. Then
\begin{align}
\nonumber U^{2} V^{2} W^{2} F \ll&\ U^{2} V^{2} W^{2} \min \left\{V^{-4} M^{2}, W^{-10} H^{5}, U^{-4} W^{-2} T, U^{-12} W^{-6} K^{2} H T\right\} \\
\nonumber \leqslant&\ U^{2} V^{2} W^{2}\left(V^{-4} M^{2}\right)^{\frac{1}{2}}\left(W^{-10} H^{5}\right)^{\frac{1}{10}}\left(U^{-4} W^{-2} T\right)^{\frac{7}{20}}\left(U^{-12} W^{-6} K^{2} H T\right)^{\frac{1}{20}}\\
\nonumber =&\ T^{\frac{2}{5}} M H^{\frac{11}{20}} K^{\frac{1}{10}} \\
\nonumber =&\ T^{\frac{2}{5}} (M H K)^{\frac{1}{10}} \left(M^{\frac{9}{10}} H^{\frac{9}{20}} \right) \\
\nonumber \ll&\ x^{1-\varepsilon^2},
\end{align}
where $M^{2} H \ll X^{\frac{446}{387}}$ is required.

(c) $F \leqslant 2 V^{-4} M^{2}, F>2 W^{-10} H^{5}, F \leqslant 2 U^{-4} W^{-2} K^{2} H$. Then
\begin{align}
\nonumber U^{2} V^{2} W^{2} F \ll&\ U^{2} V^{2} W^{2} \min \left\{V^{-4} M^{2}, W^{-10} T, W^{-30} H^{5} T, U^{-4} W^{-2} K^{2} H\right\} \\
\nonumber \leqslant&\ U^{2} V^{2} W^{2}\left(V^{-4} M^{2}\right)^{\frac{1}{2}}\left(U^{-4} W^{-2} K^{2} H\right)^{\frac{1}{2}} \\
\nonumber =&\ W M H^{\frac{1}{2}} K \\
\nonumber \ll&\ x (\log x)^{-11 B}.
\end{align}

(d) $F \leqslant 2 V^{-4} M^{2}, F>2 W^{-10} H^{5}, F> 2 U^{-4} W^{-2} K^{2} H$. Then
\begin{align}
\nonumber U^{2} V^{2} W^{2} F \ll&\  U^{2} V^{2} W^{2} \min \left\{V^{-4} M^{2}, W^{-10} T, W^{-30} H^{5} T, U^{-4} W^{-2} T, U^{-12} W^{-6} K^{2} H T\right\} \\
\nonumber \leqslant&\ U^{2} V^{2} W^{2}\left(V^{-4} M^{2}\right)^{\frac{1}{2}}\left(W^{-30} H^{5} T\right)^{\frac{1}{30}}\left(U^{-4} W^{-2} T\right)^{\frac{9}{20}}\left(U^{-12} W^{-6} K^{2} H T\right)^{\frac{1}{60}}\\
\nonumber =&\ T^{\frac{1}{2}} M H^{\frac{11}{60}} K^{\frac{1}{30}} \\
\nonumber =&\ T^{\frac{1}{2}} (M H K)^{\frac{1}{30}} \left(M^{\frac{29}{30}} H^{\frac{9}{60}}\right) \\
\nonumber \ll&\ x^{1-\varepsilon^2},
\end{align}
where $M^{58} H^{9} \ll X^{\frac{1264}{43}}$ is required.

(e) $F>2 V^{-4} M^{2}, F \leqslant 2 W^{-10} H^{5}, F \leqslant 2 U^{-4} W^{-2} K^{2} H$. Then
\begin{align}
\nonumber U^{2} V^{2} W^{2} F \ll&\ U^{2} V^{2} W^{2} \min \left\{V^{-4} T, V^{-12} M^{2} T, W^{-10} H^{5}, U^{-4} W^{-2} K^{2} H\right\} \\
\nonumber \leqslant&\ U^{2} V^{2} W^{2}\left(V^{-4} T\right)^{\frac{7}{20}}\left(V^{-12} M^{2} T\right)^{\frac{1}{20}}\left(W^{-10} H^{5}\right)^{\frac{1}{10}}\left(U^{-4} W^{-2} K^{2} H\right)^{\frac{1}{2}} \\
\nonumber =&\ T^{\frac{2}{5}} M^{\frac{1}{10}} H K \\
\nonumber =&\ T^{\frac{2}{5}} (M H K) M^{-\frac{9}{10}} \\
\nonumber \ll&\ x^{1-\varepsilon^2},
\end{align}
where $X^{\frac{164}{387}} \ll M$ is required.

(f) $F>2 V^{-4} M^{2}, F \leqslant 2 W^{-10} H^{5}, F>2 U^{-4} W^{-2} K^{2} H$. Then
\begin{align}
\nonumber U^{2} V^{2} W^{2} F \ll&\ U^{2} V^{2} W^{2} \min \left\{V^{-4} T, V^{-12} M^{2} T, W^{-10} H^{5}, U^{-4} W^{-2} T, U^{-12} W^{-6} K^{2} H T\right\} \\
\nonumber \leqslant&\ U^{2} V^{2} W^{2}\left(V^{-4} T\right)^{\frac{7}{20}}\left(V^{-12} M^{2} T\right)^{\frac{1}{20}}\left(W^{-10} H^{5}\right)^{\frac{1}{10}}\left(U^{-4} W^{-2} T\right)^{\frac{1}{2}} \\
\nonumber =&\ T^{\frac{9}{10}} \left( M^{\frac{1}{10}} H^{\frac{1}{2}} \right) \\
\nonumber \ll&\ x^{1-\varepsilon^2},
\end{align}
where $M H^{5} \ll X^{\frac{61}{43}}$ is required. This can be obtained by using $M H^{5} \ll X^{\frac{60}{43}}$.

(g) $F>2 V^{-4} M^{2}, F> 2 W^{-10} H^{5}, F \leqslant 2 U^{-4} W^{-2} K^{2} H$. Then
\begin{align}
\nonumber U^{2} V^{2} W^{2} F \ll&\ U^{2} V^{2} W^{2} \min \left\{V^{-4} T, V^{-12} M^{2} T, W^{-10} T, W^{-30} H^{5} T, U^{-4} W^{-2} K^{2} H\right\} \\
\nonumber \leqslant&\ U^{2} V^{2} W^{2}\left(V^{-4} T\right)^{\frac{9}{20}}\left(V^{-12} M^{2} T\right)^{\frac{1}{60}}\left(W^{-30} H^{5} T\right)^{\frac{1}{30}}\left(U^{-4} W^{-2} K^{2} H\right)^{\frac{1}{2}} \\
\nonumber =&\ T^{\frac{1}{2}} M^{\frac{1}{30}} H^{\frac{2}{3}} K \\
\nonumber =&\ T^{\frac{1}{2}} (M H K) \left( M^{-\frac{29}{30}} H^{-\frac{1}{3}}\right) \\
\nonumber \ll&\ x^{1-\varepsilon^2},
\end{align}
where $X^{\frac{615}{43}} \ll M^{29} H^{10}$ is required.

(h) $F>2 V^{-4} M^{2}, F> 2 W^{-10} H^{5}, F> 2 U^{-4} W^{-2} K^{2} H$. Then
\begin{align}
\nonumber U^{2} V^{2} W^{2} F \ll&\ U^{2} V^{2} W^{2} \min \left\{V^{-4}, V^{-12} M^{2}, W^{-10}, W^{-30} H^{5}, U^{-4} W^{-2}, U^{-12} W^{-6} K^{2} H\right\} T \\
\nonumber \leqslant&\ U^{2} V^{2} W^{2}\left(V^{-4}\right)^{\frac{9}{20}}\left(V^{-12} M^{2}\right)^{\frac{1}{60}}\left(W^{-30} H^{5}\right)^{\frac{1}{30}}\left(U^{-4} W^{-2}\right)^{\frac{1}{2}} T \\
\nonumber =&\ T \left( M^{\frac{1}{30}} H^{\frac{1}{6}}\right) \\
\nonumber \ll&\ x^{1-\varepsilon^2},
\end{align}
where $M H^{5} \ll X^{\frac{60}{43}}$ is required.

Assuming the condition (6). By applying the mean value estimate and Halász method to $M^{2}(s), H^{4}(s)$ and $K^{3}(s)$, we get
$$
I \ll U^{2} V^{2} W^{2} x^{-1} F (\log x)^{c}
$$
where
\begin{align}
\nonumber F = \min & \left\{ V^{-4}\left(M^{2}+T\right), V^{-4} M^{2}+V^{-12} M^{2} T, W^{-8}\left(H^{4}+T\right), \right. \\
\nonumber & \left. W^{-8} H^{4}+W^{-24} H^{4} T, U^{-6}\left(K^{3}+T\right), U^{-6} K^{3}+U^{-18} K^{3} T\right\}.
\end{align}
We consider 8 cases:

(a) $F \leqslant 2 V^{-4} M^{2}, F \leqslant 2 W^{-8} H^{4}, F \leqslant 2 U^{-6} K^{3}$. Then
\begin{align}
\nonumber U^{2} V^{2} W^{2} F \ll&\ U^{2} V^{2} W^{2} \min \left\{V^{-4} M^{2}, W^{-8} H^{4}, U^{-6} K^{3}\right\} \\
\nonumber \leqslant&\ U^{2} V^{2} W^{2}\left(V^{-4} M^{2}\right)^{\frac{1}{2}}\left(W^{-8} H^{4}\right)^{\frac{1}{6}}\left(U^{-6} K^{3}\right)^{\frac{1}{3}} \\
\nonumber =&\ W^{\frac{2}{3}} M H^{\frac{2}{3}} K \\
\nonumber \ll&\ x (\log x)^{-11 B}.
\end{align}

(b) $F \leqslant 2 V^{-4} M^{2}, F \leqslant 2 W^{-8} H^{4}, F>2 U^{-6} K^{3}$. Then
\begin{align}
\nonumber U^{2} V^{2} W^{2} F \ll&\ U^{2} V^{2} W^{2} \min \left\{V^{-4} M^{2}, W^{-8} H^{4}, U^{-6} T, U^{-18} K^{3} T\right\} \\
\nonumber \leqslant&\ U^{2} V^{2} W^{2}\left(V^{-4} M^{2}\right)^{\frac{1}{2}}\left(W^{-8} H^{4}\right)^{\frac{1}{4}}\left(U^{-6} T\right)^{\frac{5}{24}}\left(U^{-18} K^{3} T\right)^{\frac{1}{24}} \\
\nonumber =&\ T^{\frac{1}{4}} M H K^{\frac{1}{8}} \\
\nonumber =&\ T^{\frac{1}{4}} (M H K)^{\frac{1}{8}} \left(M^{\frac{7}{8}} H^{\frac{7}{8}} \right) \\
\nonumber \ll&\ x^{1-\varepsilon^2},
\end{align}
where $M H \ll X^{\frac{219}{301}}$ is required.

(c) $F \leqslant 2 V^{-4} M^{2}, F>2 W^{-8} H^{4}, F \leqslant 2 U^{-6} K^{3}$. Then
\begin{align}
\nonumber U^{2} V^{2} W^{2} F \ll&\ U^{2} V^{2} W^{2} \min \left\{V^{-4} M^{2}, W^{-8} T, W^{-24} H^{4} T, U^{-6} K^{3}\right\} \\
\nonumber \leqslant&\ U^{2} V^{2} W^{2}\left(V^{-4} M^{2}\right)^{\frac{1}{2}}\left(W^{-8} T\right)^{\frac{1}{8}}\left(W^{-24} H^{4} T\right)^{\frac{1}{24}}\left(U^{-6} K^{3}\right)^{\frac{1}{3}} \\
\nonumber =&\ T^{\frac{1}{6}} M H^{\frac{1}{6}} K \\
\nonumber =&\ T^{\frac{1}{6}} (M H K) H^{-\frac{5}{6}} \\
\nonumber \ll&\ x^{1-\varepsilon^2},
\end{align}
where $X^{\frac{41}{215}} \ll H$ is required.

(d) $F \leqslant 2 V^{-4} M^{2}, F>2 W^{-8} H^{4}, F> 2 U^{-6} K^{3}$. Then
\begin{align}
\nonumber U^{2} V^{2} W^{2} F \ll&\ U^{2} V^{2} W^{2} \min \left\{V^{-4} M^{2}, W^{-8} T, W^{-24} H^{4} T, U^{-6} T, U^{-18} K^{3} T\right\} \\
\nonumber \leqslant&\ U^{2} V^{2} W^{2}\left(V^{-4} M^{2}\right)^{\frac{1}{2}}\left(W^{-8} T\right)^{\frac{1}{8}}\left(W^{-24} H^{4} T\right)^{\frac{1}{24}}\left(U^{-6} T\right)^{\frac{1}{3}} \\
\nonumber =&\ T^{\frac{1}{2}} \left( M H^{\frac{1}{6}}\right) \\ 
\nonumber \ll&\ x^{1-\varepsilon^2},
\end{align}
where $M^{6} H \ll X^{\frac{135}{43}}$ is required.

(e) $F>2 V^{-4} M^{2}, F \leqslant 2 W^{-8} H^{4}, F \leqslant 2 U^{-6} K^{3}$. Then
\begin{align}
\nonumber U^{2} V^{2} W^{2} F \ll&\ U^{2} V^{2} W^{2} \min \left\{V^{-4} T, V^{-12} M^{2} T, W^{-8} H^{4}, U^{-6} K^{3}\right\} \\
\nonumber \leqslant&\ U^{2} V^{2} W^{2}\left(V^{-4} T\right)^{\frac{3}{8}}\left(V^{-12} M^{2} T\right)^{\frac{1}{24}}\left(W^{-8} H^{4}\right)^{\frac{1}{4}}\left(U^{-6} K^{3}\right)^{\frac{1}{3}} \\
\nonumber =&\ T^{\frac{5}{12}} M^{\frac{1}{12}} H K \\
\nonumber =&\ T^{\frac{5}{12}} (M H K) M^{-\frac{11}{12}} \\
\nonumber \ll&\ x^{1-\varepsilon^2},
\end{align}
where $X^{\frac{205}{473}} \ll M$ is required.

(f) $F>2 V^{-4} M^{2}, F \leqslant 2 W^{-8} H^{4}, F>2 U^{-6} K^{3}$. Then
\begin{align}
\nonumber U^{2} V^{2} W^{2} F \ll&\ U^{2} V^{2} W^{2} \min \left\{V^{-4} T, V^{-12} M^{2} T, W^{-8} H^{4}, U^{-6} T, U^{-18} K^{3} T\right\} \\
\nonumber \leqslant&\ U^{2} V^{2} W^{2}\left(V^{-4} T\right)^{\frac{1}{2}}\left(W^{-8} H^{4}\right)^{\frac{1}{4}}\left(U^{-6} T\right)^{\frac{5}{24}}\left(U^{-18} K^{3} T\right)^{\frac{1}{24}} \\
\nonumber =&\ T^{\frac{3}{4}} H K^{\frac{1}{8}} \\
\nonumber =&\ T^{\frac{3}{4}} (M H K)^{\frac{1}{8}} \left(M^{-\frac{1}{8}} H^{\frac{7}{8}} \right) \\
\nonumber \ll&\ x^{1-\varepsilon^2},
\end{align}
where $M^{-1} H^{7} \ll X^{\frac{55}{43}}$ is required.

(g) $F>2 V^{-4} M^{2}, F> 2 W^{-8} H^{4}, F \leqslant 2 U^{-6} K^{3}$. Then
\begin{align}
\nonumber U^{2} V^{2} W^{2} F \ll&\ U^{2} V^{2} W^{2} \min \left\{V^{-4} T, V^{-12} M^{2} T, W^{-8} T, W^{-24} H^{4} T, U^{-6} K^{3}\right\} \\
\nonumber \leqslant&\ U^{2} V^{2} W^{2}\left(V^{-4} T\right)^{\frac{1}{2}}\left(W^{-8} T\right)^{\frac{1}{8}}\left(W^{-24} H^{4} T\right)^{\frac{1}{24}}\left(U^{-6} K^{3}\right)^{\frac{1}{3}} \\
\nonumber =&\ T^{\frac{2}{3}} H^{\frac{1}{6}} K \\
\nonumber =&\ T^{\frac{2}{3}} (M H K) \left( M^{-1} H^{-\frac{5}{6}} \right) \\
\nonumber \ll&\ x^{1-\varepsilon^2},
\end{align}
where $X^{\frac{164}{43}} \ll M^{6} H^{5}$ is required.

(h) $F>2 V^{-4} M^{2}, F> 2 W^{-8} H^{4}, F> 2 U^{-6} K^{3}$. Then
\begin{align}
\nonumber U^{2} V^{2} W^{2} F \ll&\ U^{2} V^{2} W^{2} \min \left\{V^{-4}, V^{-12} M^{2}, W^{-8}, W^{-24} H^{4}, U^{-6}, U^{-18} K^{3}\right\} T \\
\nonumber \leqslant&\ U^{2} V^{2} W^{2}\left(V^{-4}\right)^{\frac{1}{2}}\left(W^{-8}\right)^{\frac{1}{4}}\left(U^{-6}\right)^{\frac{5}{24}}\left(U^{-18} K^{3}\right)^{\frac{1}{24}} T \\
\nonumber =&\ T K^{\frac{1}{8}} \\
\nonumber =&\ T (M H K)^{\frac{1}{8}} \left(M^{-\frac{1}{8}} H^{-\frac{1}{8}} \right) \\
\nonumber \ll&\ x^{1-\varepsilon^2},
\end{align}
where $X^{\frac{27}{43}} \ll M H$ is required.

Assuming the condition (7). By applying the mean value estimate and Halász method to $M^{2}(s) H(s), H^{6}(s)$ and $K^{2}(s)$, we get
$$
I \ll U^{2} V^{2} W^{2} x^{-1} F (\log x)^{c}
$$
where
\begin{align}
\nonumber F = \min & \left\{ V^{-4} W^{-2}\left(M^{2} H+T\right), V^{-4} W^{-2} M^{2} H+V^{-12} W^{-6} M^{2} H T, \right. \\
\nonumber & \left. W^{-12}\left(H^{6}+T\right), W^{-12} H^{6}+W^{-36} H^{6} T, U^{-4}\left(K^{2}+T\right), U^{-4} K^{2}+U^{-12} K^{2} T\right\}.
\end{align}
We consider 8 cases:

(a) $F \leqslant 2 V^{-4} W^{-2} M^{2} H, F \leqslant 2 W^{-12} H^{6}, F \leqslant 2 U^{-4} K^{2}$. Then
\begin{align}
\nonumber U^{2} V^{2} W^{2} F \ll&\ U^{2} V^{2} W^{2} \min \left\{V^{-4} W^{-2} M^{2} H, W^{-12} H^{6}, U^{-4} K^{2}\right\} \\
\nonumber \leqslant&\ U^{2} V^{2} W^{2}\left(V^{-4} W^{-2} M^{2} H\right)^{\frac{1}{2}}\left(U^{-4} K^{2}\right)^{\frac{1}{2}} \\
\nonumber =&\ W M H^{\frac{1}{2}} K \\
\nonumber \ll&\ x (\log x)^{-11 B}.
\end{align}

(b) $F \leqslant 2 V^{-4} W^{-2} M^{2} H, F \leqslant 2 W^{-12} H^{6}, F>2 U^{-4} K^{2}$. Then
\begin{align}
\nonumber U^{2} V^{2} W^{2} F \ll&\ U^{2} V^{2} W^{2} \min \left\{V^{-4} W^{-2} M^{2} H, W^{-12} H^{6}, U^{-4} T, U^{-12} K^{2} T\right\} \\
\nonumber \leqslant&\ U^{2} V^{2} W^{2}\left(V^{-4} W^{-2} M^{2} H\right)^{\frac{1}{2}}\left(W^{-12} H^{6}\right)^{\frac{1}{12}}\left(U^{-4} T\right)^{\frac{3}{8}}\left(U^{-12} K^{2} T\right)^{\frac{1}{24}} \\
\nonumber =&\ T^{\frac{5}{12}} M H K^{\frac{1}{12}} \\
\nonumber =&\ T^{\frac{5}{12}} (M H K)^{\frac{1}{12}} \left(M^{\frac{11}{12}} H^{\frac{11}{12}} \right) \\
\nonumber \ll&\ x^{1-\varepsilon^2},
\end{align}
where $M H \ll X^{\frac{268}{473}}$ is required.

(c) $F \leqslant 2 V^{-4} W^{-2} M^{2} H, F>2 W^{-12} H^{6}, F \leqslant 2 U^{-4} K^{2}$. Then
\begin{align}
\nonumber U^{2} V^{2} W^{2} F \ll&\ U^{2} V^{2} W^{2} \min \left\{V^{-4} W^{-2} M^{2} H, W^{-12} T, W^{-36} H^{6} T, U^{-4} K^{2}\right\} \\
\nonumber \leqslant&\ U^{2} V^{2} W^{2}\left(V^{-4} W^{-2} M^{2} H\right)^{\frac{1}{2}}\left(U^{-4} K^{2}\right)^{\frac{1}{2}} \\
\nonumber =&\ W M H^{\frac{1}{2}} K \\
\nonumber \ll&\ x (\log x)^{-11 B}.
\end{align}

(d) $F \leqslant 2 V^{-4} W^{-2} M^{2} H, F>2 W^{-12} H^{6}, F> 2 U^{-4} K^{2}$. Then
\begin{align}
\nonumber U^{2} V^{2} W^{2} F \ll&\ U^{2} V^{2} W^{2} \min \left\{V^{-4} W^{-2} M^{2} H, W^{-12} T, W^{-36} H^{6} T, U^{-4} T, U^{-12} K^{2} T\right\} \\
\nonumber \leqslant&\ U^{2} V^{2} W^{2}\left(V^{-4} W^{-2} M^{2} H\right)^{\frac{1}{2}}\left(W^{-36} H^{6} T\right)^{\frac{1}{36}}\left(U^{-4} T\right)^{\frac{11}{24}}\left(U^{-12} K^{2} T\right)^{\frac{1}{72}} \\
\nonumber =&\ T^{\frac{1}{2}} M H^{\frac{2}{3}} K^{\frac{1}{36}} \\
\nonumber =&\ T^{\frac{1}{2}} (M H K)^{\frac{1}{36}} \left( M^{\frac{35}{36}} H^{\frac{23}{36}} \right) \\
\nonumber \ll&\ x^{1-\varepsilon^2},
\end{align}
where $M^{35} H^{23} \ll X^{\frac{767}{43}}$ is required.

(e) $F>2 V^{-4} W^{-2} M^{2} H, F \leqslant 2 W^{-12} H^{6}, F \leqslant 2 U^{-4} K^{2}$. Then
\begin{align}
\nonumber U^{2} V^{2} W^{2} F \ll&\ U^{2} V^{2} W^{2} \min \left\{V^{-4} W^{-2} T, V^{-12} W^{-6} M^{2} H T, W^{-12} H^{6}, U^{-4} K^{2}\right\} \\
\nonumber \leqslant&\ U^{2} V^{2} W^{2}\left(V^{-4} W^{-2} T\right)^{\frac{3}{8}}\left(V^{-12} W^{-6} M^{2} H T\right)^{\frac{1}{24}}\left(W^{-12} H^{6}\right)^{\frac{1}{12}}\left(U^{-4} K^{2}\right)^{\frac{1}{2}} \\
\nonumber =&\ T^{\frac{5}{12}} M^{\frac{1}{12}} H^{\frac{13}{24}} K \\
\nonumber =&\ T^{\frac{5}{12}} (M H K) \left( M^{-\frac{11}{12}} H^{-\frac{11}{24}} \right) \\
\nonumber \ll&\ x^{1-\varepsilon^2},
\end{align}
where $X^{\frac{410}{473}} \ll M^{2} H$ is required.

(f) $F>2 V^{-4} W^{-2} M^{2} H, F \leqslant 2 W^{-12} H^{6}, F>2 U^{-4} K^{2}$. Then
\begin{align}
\nonumber U^{2} V^{2} W^{2} F \ll&\ U^{2} V^{2} W^{2} \min \left\{V^{-4} W^{-2} T, V^{-12} W^{-6} M^{2} H T, W^{-12} H^{6}, U^{-4} T, U^{-12} K^{2} T\right\} \\
\nonumber \leqslant&\ U^{2} V^{2} W^{2}\left(V^{-4} W^{-2} T\right)^{\frac{3}{8}}\left(V^{-12} W^{-6} M^{2} H T\right)^{\frac{1}{24}}\left(W^{-12} H^{6}\right)^{\frac{1}{12}}\left(U^{-4} T\right)^{\frac{1}{2}} \\
\nonumber =&\ T^{\frac{11}{12}} \left( M^{\frac{1}{12}} H^{\frac{13}{24}} \right) \\
\nonumber \ll&\ x^{1-\varepsilon^2},
\end{align}
where $M^{2} H^{13} \ll X^{\frac{130}{43}}$ is required.

(g) $F>2 V^{-4} W^{-2} M^{2} H, F> 2 W^{-12} H^{6}, F \leqslant 2 U^{-4} K^{2}$. Then
\begin{align}
\nonumber U^{2} V^{2} W^{2} F \ll&\ U^{2} V^{2} W^{2} \min \left\{V^{-4} W^{-2} T, V^{-12} W^{-6} M^{2} H T, W^{-12} T, W^{-36} H^{6} T, U^{-4} K^{2}\right\} \\
\nonumber \leqslant&\ U^{2} V^{2} W^{2}\left(V^{-4} W^{-2} T\right)^{\frac{11}{24}}\left(V^{-12} W^{-6} M^{2} H T\right)^{\frac{1}{7_{2}}}\left(W^{-36} H^{6} T\right)^{\frac{1}{36}}\left(U^{-4} K^{2}\right)^{\frac{1}{2}} \\
\nonumber =&\ T^{\frac{1}{2}} M^{\frac{1}{36}} H^{\frac{13}{72}} K \\
\nonumber =&\ T^{\frac{1}{2}} (M H K) \left( M^{-\frac{35}{36}} H^{-\frac{59}{72}} \right) \\
\nonumber \ll&\ x^{1-\varepsilon^2},
\end{align}
where $X^{\frac{1476}{43}} \ll M^{70} H^{59}$ is required.

(h) $F>2 V^{-4} W^{-2} M^{2} H, F> 2 W^{-12} H^{6}, F> 2 U^{-4} K^{2}$. Then
\begin{align}
\nonumber U^{2} V^{2} W^{2} F \ll&\ U^{2} V^{2} W^{2} \min \left\{V^{-4} W^{-2}, V^{-12} W^{-6} M^{2} H, W^{-12}, W^{-36} H^{6}, U^{-4}, U^{-12} K^{2}\right\} T \\
\nonumber \leqslant&\ U^{2} V^{2} W^{2}\left(V^{-4} W^{-2}\right)^{\frac{1}{2}}\left(W^{-12}\right)^{\frac{1}{12}}\left(U^{-4}\right)^{\frac{3}{8}}\left(U^{-12} K^{2}\right)^{\frac{1}{24}} T \\
\nonumber =&\ T K^{\frac{1}{12}} \\
\nonumber =&\ T (M H K)^{\frac{1}{12}} \left(M^{-\frac{1}{12}} H^{-\frac{1}{12}} \right) \\
\nonumber \ll&\ x^{1-\varepsilon^2},
\end{align}
where $X^{\frac{19}{43}} \ll M H$ is required. This can be obtained by using $X^{\frac{738}{1505}} \ll M H^{\frac{59}{70}} \ll M H$.

Assuming the condition (8). By applying the mean value estimate and Halász method to $M^{2}(s) H(s), H^{7}(s)$ and $K^{2}(s)$, we get
$$
I \ll U^{2} V^{2} W^{2} x^{-1} F (\log x)^{c}
$$
where
\begin{align}
\nonumber F = \min & \left\{ V^{-4} W^{-2}\left(M^{2} H+T\right), V^{-4} W^{-2} M^{2} H+V^{-12} W^{-6} M^{2} H T, \right. \\
\nonumber & \left. W^{-14}\left(H^{7}+T\right), W^{-14} H^{7}+W^{-42} H^{7} T, U^{-4}\left(K^{2}+T\right) U^{-4} K^{2}+U^{-12} K^{2} T\right\}.
\end{align}
We consider 8 cases:

(a) $F \leqslant 2 V^{-4} W^{-2} M^{2} H, F \leqslant 2 W^{-14} H^{7}, F \leqslant 2 U^{-4} K^{2}$. Then
\begin{align}
\nonumber U^{2} V^{2} W^{2} F \ll&\ U^{2} V^{2} W^{2} \min \left\{V^{-4} W^{-2} M^{2} H, W^{-14} H^{7}, U^{-4} K^{2}\right\} \\
\nonumber \leqslant&\ U^{2} V^{2} W^{2}\left(V^{-4} W^{-2} M^{2} H\right)^{\frac{1}{2}}\left(U^{-4} K^{2}\right)^{\frac{1}{2}} \\
\nonumber =&\ W M H^{\frac{1}{2}} K \\
\nonumber \ll&\ x (\log x)^{-11 B}.
\end{align}

(b) $F \leqslant 2 V^{-4} W^{-2} M^{2} H, F \leqslant 2 W^{-14} H^{7}, F>2 U^{-4} K^{2}$. Then
\begin{align}
\nonumber U^{2} V^{2} W^{2} F \ll&\ U^{2} V^{2} W^{2} \min \left\{V^{-4} W^{-2} M^{2} H, W^{-14} H^{7}, U^{-4} T, U^{-12} K^{2} T\right\} \\
\nonumber \leqslant&\ U^{2} V^{2} W^{2}\left(V^{-4} W^{-2} M^{2} H\right)^{\frac{1}{2}}\left(W^{-14} H^{7}\right)^{\frac{1}{14}}\left(U^{-4} T\right)^{\frac{11}{28}}\left(U^{-12} K^{2} T\right)^{\frac{1}{28}} \\
\nonumber =&\ T^{\frac{3}{7}} M H K^{\frac{1}{14}} \\
\nonumber =&\ T^{\frac{3}{7}} (M H K)^{\frac{1}{14}} \left(M^{\frac{13}{14}} H^{\frac{13}{14}} \right) \\
\nonumber \ll&\ x^{1-\varepsilon^2},
\end{align}
where $M H \ll X^{\frac{313}{559}}$ is required.

(c) $F \leqslant 2 V^{-4} W^{-2} M^{2} H, F>2 W^{-14} H^{7}, F \leqslant 2 U^{-4} K^{2}$. Then 
\begin{align}
\nonumber U^{2} V^{2} W^{2} F \ll&\ U^{2} V^{2} W^{2} \min \left\{V^{-4} W^{-2} M^{2} H, W^{-14} T, W^{-42} H^{7} T, U^{-4} K^{2}\right\} \\
\nonumber \leqslant&\ U^{2} V^{2} W^{2}\left(V^{-4} W^{-2} M^{2} H\right)^{\frac{1}{2}}\left(U^{-4} K^{2}\right)^{\frac{1}{2}} \\
\nonumber =&\ W M H^{\frac{1}{2}} K \\
\nonumber \ll&\ x (\log x)^{-11 B}.
\end{align}

(d) $F \leqslant 2 V^{-4} W^{-2} M^{2} H, F>2 W^{-14} H^{7}, F> 2 U^{-4} K^{2}$. Then
\begin{align}
\nonumber U^{2} V^{2} W^{2} F \ll&\ U^{2} V^{2} W^{2} \min \left\{V^{-4} W^{-2} M^{2} H, W^{-14} T, W^{-42} H^{7} T, U^{-4} T, U^{-12} K^{2} T\right\} \\
\nonumber \leqslant&\ U^{2} V^{2} W^{2}\left(V^{-4} W^{-2} M^{2} H\right)^{\frac{1}{2}}\left(W^{-42} H^{7} T\right)^{\frac{1}{42}}\left(U^{-4} T\right)^{\frac{13}{28}}\left(U^{-12} K^{2} T\right)^{\frac{1}{84}} \\
\nonumber =&\ T^{\frac{1}{2}} M H^{\frac{2}{3}} K^{\frac{1}{42}} \\
\nonumber =&\ T^{\frac{1}{2}} (M H K)^{\frac{1}{42}} \left( M^{\frac{41}{42}} H^{\frac{9}{14}} \right) \\
\nonumber \ll&\ x^{1-\varepsilon^2},
\end{align}
where $M^{41} H^{27} \ll X^{\frac{902}{43}}$ is required.

(e) $F>2 V^{-4} W^{-2} M^{2} H, F \leqslant 2 W^{-14} H^{7}, F \leqslant 2 U^{-4} K^{2}$. Then 
\begin{align}
\nonumber U^{2} V^{2} W^{2} F \ll&\ U^{2} V^{2} W^{2} \min \left\{V^{-4} W^{-2} T, V^{-12} W^{-6} M^{2} H T, W^{-14} H^{7}, U^{-4} K^{2}\right\} \\
\nonumber \leqslant&\ U^{2} V^{2} W^{2}\left(V^{-4} W^{-2} T\right)^{\frac{11}{28}}\left(V^{-12} W^{-6} M^{2} H T\right)^{\frac{1}{28}}\left(W^{-14} H^{7}\right)^{\frac{1}{14}}\left(U^{-4} K^{2}\right)^{\frac{1}{2}} \\
\nonumber =&\ T^{\frac{3}{7}} M^{\frac{1}{14}} H^{\frac{15}{28}} K \\
\nonumber =&\ T^{\frac{3}{7}} (M H K) \left( M^{-\frac{13}{14}} H^{-\frac{13}{28}} \right) \\
\nonumber \ll&\ x^{1-\varepsilon^2},
\end{align}
where $X^{\frac{492}{559}} \ll M^{2} H$ is required.

(f) $F>2 V^{-4} W^{-2} M^{2} H, F \leqslant 2 W^{-14} H^{7}, F>2 U^{-4} K^{2}$. Then 
\begin{align}
\nonumber U^{2} V^{2} W^{2} F \ll&\ U^{2} V^{2} W^{2} \min \left\{V^{-4} W^{-2} T, V^{-12} W^{-6} M^{2} H T, W^{-14} H^{7}, U^{-4} T, U^{-12} K^{2} T\right\} \\
\nonumber \leqslant&\ U^{2} V^{2} W^{2}\left(V^{-4} W^{-2} T\right)^{\frac{11}{28}}\left(V^{-12} W^{-6} M^{2} H T\right)^{\frac{1}{28}}\left(W^{-14} H^{7}\right)^{\frac{1}{14}}\left(U^{-4} T\right)^{\frac{1}{2}} \\
\nonumber =&\ T^{\frac{13}{14}} \left( M^{\frac{1}{14}} H^{\frac{15}{28}} \right) \\
\nonumber \ll&\ x^{1-\varepsilon^2},
\end{align}
where $M^{2} H^{15} \ll X^{\frac{138}{43}}$ is required.

(g) $F>2 V^{-4} W^{-2} M^{2} H, F> 2 W^{-14} H^{7}, F \leqslant 2 U^{-4} K^{2}$. Then
\begin{align}
\nonumber U^{2} V^{2} W^{2} F \ll&\ U^{2} V^{2} W^{2} \min \left\{V^{-4} W^{-2} T, V^{-12} W^{-6} M^{2} H T, W^{-14} T, W^{-42} H^{7} T, U^{-4} K^{2}\right\} \\
\nonumber \leqslant&\ U^{2} V^{2} W^{2}\left(V^{-4} W^{-2} T\right)^{\frac{13}{88}}\left(V^{-12} W^{-6} M^{2} H T\right)^{\frac{1}{84}} \left(W^{-42} H^{7} T\right)^{\frac{1}{42}}\left(U^{-4} K^{2}\right)^{\frac{1}{2}} \\
\nonumber =&\ T^{\frac{1}{2}} M^{\frac{1}{42}} H^{\frac{5}{28}} K \\
\nonumber =&\ T^{\frac{1}{2}} (M H K) \left( M^{-\frac{41}{42}} H^{-\frac{23}{28}} \right) \\
\nonumber \ll&\ x^{1-\varepsilon^2},
\end{align}
where $X^{\frac{1722}{43}} \ll M^{82} H^{69}$ is required.

(h) $F>2 V^{-4} W^{-2} M^{2} H, F> 2 W^{-14} H^{7}, F> 2 U^{-4} K^{2}$. Then
\begin{align}
\nonumber U^{2} V^{2} W^{2} F \ll&\ U^{2} V^{2} W^{2} \min \left\{V^{-4} W^{-2}, V^{-12} W^{-6} M^{2} H, W^{-14}, W^{-42} H^{7}, U^{-4}, U^{-12} K^{2}\right\} T \\
\nonumber \leqslant&\ U^{2} V^{2} W^{2}\left(V^{-4} W^{-2}\right)^{\frac{1}{2}}\left(W^{-14}\right)^{\frac{1}{14}}\left(U^{-4}\right)^{\frac{11}{28}}\left(U^{-12} K^{2}\right)^{\frac{1}{28}} T \\
\nonumber =&\ T K^{\frac{1}{14}} \\
\nonumber =&\ T (M H K)^{\frac{1}{14}} \left(M^{-\frac{1}{14}} H^{-\frac{1}{14}} \right) \\
\nonumber \ll&\ x^{1-\varepsilon^2},
\end{align}
where $X^{\frac{15}{43}} \ll M H$ is required. This can be obtained by using $X^{\frac{246}{559}} \ll M H^{\frac{1}{2}} \ll M H$.

Assuming the condition (9). By applying the mean value estimate and Halász method to $M^{2}(s), H^{6}(s)$ and $K^{2}(s) H(s)$, we get
$$
I \ll U^{2} V^{2} W^{2} x^{-1} F (\log x)^{c}
$$
where
\begin{align}
\nonumber F = \min & \left\{ V^{-4}\left(M^{2}+T\right), V^{-4} M^{2}+V^{-12} M^{2} T, W^{-12}\left(H^{6}+T\right), W^{-12} H^{6} +W^{-36} H^{6} T, \right. \\
\nonumber & \left. U^{-4} W^{-2}\left(K^{2} H+T\right), U^{-4} W^{-2} K^{2} H +U^{-12} W^{-6} K^{2} H T\right\}.
\end{align}
We consider 8 cases:

(a) $F \leqslant 2 V^{-4} M^{2}, F \leqslant 2 W^{-12} H^{6}, F \leqslant 2 U^{-4} W^{-2} K^{2} H$. Then
\begin{align}
\nonumber U^{2} V^{2} W^{2} F \ll&\ U^{2} V^{2} W^{2} \min \left\{V^{-4} M^{2}, W^{-12} H^{6}, U^{-4} W^{-2} K^{2} H\right\} \\
\nonumber \leqslant&\ U^{2} V^{2} W^{2}\left(V^{-4} M^{2}\right)^{\frac{1}{2}}\left(U^{-4} W^{-2} K^{2} H\right)^{\frac{1}{2}} \\
\nonumber =&\ W M H^{\frac{1}{2}} K \\
\nonumber \ll&\ x (\log x)^{-11 B}.
\end{align}

(b) $F \leqslant 2 V^{-4} M^{2}, F \leqslant 2 W^{-12} H^{6}, F>2 U^{-4} W^{-2} K^{2} H$. Then
\begin{align}
\nonumber U^{2} V^{2} W^{2} F \ll&\ U^{2} V^{2} W^{2} \min \left\{V^{-4} M^{2}, W^{-12} H^{6}, U^{-4} W^{-2} T, U^{-12} W^{-6} K^{2} H T\right\} \\
\nonumber \leqslant&\ U^{2} V^{2} W^{2}\left(V^{-4} M^{2}\right)^{\frac{1}{2}}\left(W^{-12} H^{6}\right)^{\frac{1}{12}}\left(U^{-4} W^{-2} T\right)^{\frac{3}{8}}\left(U^{-12} W^{-6} K^{2} H T\right)^{\frac{1}{24}} \\
\nonumber =&\ T^{\frac{5}{12}} M H^{\frac{13}{24}} K^{\frac{1}{12}} \\
\nonumber =&\ T^{\frac{5}{12}} (M H K)^{\frac{1}{12}} \left( M^{\frac{11}{12}} H^{\frac{11}{24}} \right) \\
\nonumber \ll&\ x^{1-\varepsilon^2},
\end{align}
where $M^{2} H \ll X^{\frac{536}{473}}$ is required.

(c) $F \leqslant 2 V^{-4} M^{2}, F>2 W^{-12} H^{6}, F \leqslant 2 U^{-4} W^{-2} K^{2} H$. Then
\begin{align}
\nonumber U^{2} V^{2} W^{2} F \ll&\ U^{2} V^{2} W^{2} \min \left\{V^{-4} M^{2}, W^{-12} T, W^{-36} H^{6} T, U^{-4} W^{-2} K^{2} H\right\} \\
\nonumber \leqslant&\ U^{2} V^{2} W^{2}\left(V^{-4} M^{2}\right)^{\frac{1}{2}}\left(U^{-4} W^{-2} K^{2} H\right)^{\frac{1}{2}} \\
\nonumber =&\ W M H^{\frac{1}{2}} K \\
\nonumber \ll&\ x (\log x)^{-11 B}.
\end{align}

(d) $F \leqslant 2 V^{-4} M^{2}, F>2 W^{-12} H^{6}, F> 2 U^{-4} W^{-2} K^{2} H$. Then
\begin{align}
\nonumber U^{2} V^{2} W^{2} F \ll&\ U^{2} V^{2} W^{2} \min \left\{V^{-4} M^{2}, W^{-12} T, W^{-36} H^{6} T, U^{-4} W^{-2} T, U^{-12} W^{-6} K^{2} H T\right\} \\
\nonumber \leqslant&\ U^{2} V^{2} W^{2}\left(V^{-4} M^{2}\right)^{\frac{1}{2}}\left(W^{-36} H^{6} T\right)^{\frac{1}{36}}\left(U^{-4} W^{-2} T\right)^{\frac{11}{24}} \left(U^{-12} W^{-6} K^{2} H T\right)^{\frac{1}{72}} \\
\nonumber =&\ T^{\frac{1}{2}} M H^{\frac{13}{72}} K^{\frac{1}{36}} \\
\nonumber =&\ T^{\frac{1}{2}} (M H K)^{\frac{1}{36}} \left( M^{\frac{35}{36}} H^{\frac{11}{72}} \right) \\
\nonumber \ll&\ x^{1-\varepsilon^2},
\end{align}
where $M^{70} H^{11} \ll X^{\frac{1534}{43}}$ is required.

(e) $F>2 V^{-4} M^{2}, F \leqslant 2 W^{-12} H^{6}, F \leqslant 2 U^{-4} W^{-2} K^{2} H$. Then
\begin{align}
\nonumber U^{2} V^{2} W^{2} F \ll&\ U^{2} V^{2} W^{2} \min \left\{V^{-4} T, V^{-12} M^{2} T, W^{-12} H^{6}, U^{-4} W^{-2} K^{2} H\right\} \\
\nonumber \leqslant&\ U^{2} V^{2} W^{2}\left(V^{-4} T\right)^{\frac{3}{8}}\left(V^{-12} M^{2} T\right)^{\frac{1}{24}}\left(W^{-12} H^{6}\right)^{\frac{1}{12}}\left(U^{-4} W^{-2} K^{2} H\right)^{\frac{1}{2}} \\
\nonumber =&\ T^{\frac{5}{12}} M^{\frac{1}{12}} H K \\
\nonumber =&\ T^{\frac{5}{12}} (M H K) M^{-\frac{11}{12}} \\
\nonumber \ll&\ x^{1-\varepsilon^2},
\end{align}
where $X^{\frac{205}{473}} \ll M$ is required.

(f) $F>2 V^{-4} M^{2}, F \leqslant 2 W^{-12} H^{6}, F>2 U^{-4} W^{-2} K^{2} H$. Then 
\begin{align}
\nonumber U^{2} V^{2} W^{2} F \ll&\ U^{2} V^{2} W^{2} \min \left\{V^{-4} T, V^{-12} M^{2} T, W^{-12} H^{6}, U^{-4} W^{-2} T, U^{-12} W^{-6} K^{2} H T\right\} \\
\nonumber \leqslant&\ U^{2} V^{2} W^{2}\left(V^{-4} T\right)^{\frac{3}{8}}\left(V^{-12} M^{2} T\right)^{\frac{1}{24}}\left(W^{-12} H^{6}\right)^{\frac{1}{12}}\left(U^{-4} W^{-2} T\right)^{\frac{1}{2}} \\
\nonumber =&\ T^{\frac{11}{12}} \left( M^{\frac{1}{12}} H^{\frac{1}{2}} \right) \\
\nonumber \ll&\ x^{1-\varepsilon^2},
\end{align}
where $M H^{6} \ll X^{\frac{65}{43}}$ is required.

(g) $F>2 V^{-4} M^{2}, F> 2 W^{-12} H^{6}, F \leqslant 2 U^{-4} W^{-2} K^{2} H$. Then
\begin{align}
\nonumber U^{2} V^{2} W^{2} F \ll&\ U^{2} V^{2} W^{2} \min \left\{V^{-4} T, V^{-12} M^{2} T, W^{-12} T, W^{-36} H^{6} T, U^{-4} W^{-2} K^{2} H\right\} \\
\nonumber \leqslant&\ U^{2} V^{2} W^{2}\left(V^{-4} T\right)^{\frac{11}{24}}\left(V^{-12} M^{2} T\right)^{\frac{1}{72}}\left(W^{-36} H^{6} T\right)^{\frac{1}{36}}\left(U^{-4} W^{-2} K^{2} H\right)^{\frac{1}{2}} \\
\nonumber =&\ T^{\frac{1}{2}} M^{\frac{1}{36}} H^{\frac{2}{3}} K \\
\nonumber =&\ T^{\frac{1}{2}} (M H K) \left( M^{-\frac{35}{36}} H^{-\frac{1}{3}} \right) \\
\nonumber \ll&\ x^{1-\varepsilon^2},
\end{align}
where $X^{\frac{738}{43}} \ll M^{35} H^{12}$ is required.

(h) $F>2 V^{-4} M^{2}, F> 2 W^{-12} H^{6}, F> 2 U^{-4} W^{-2} K^{2} H$. Then
\begin{align}
\nonumber U^{2} V^{2} W^{2} F \ll&\ U^{2} V^{2} W^{2} \min \left\{V^{-4}, V^{-12} M^{2}, W^{-12}, W^{-36} H^{6}, U^{-4} W^{-2}, U^{-12} W^{-6} K^{2} H\right\} T \\
\nonumber \leqslant&\ U^{2} V^{2} W^{2}\left(V^{-4}\right)^{\frac{3}{8}}\left(V^{-12} M^{2}\right)^{\frac{1}{24}}\left(W^{-12}\right)^{\frac{1}{12}}\left(U^{-4} W^{-2}\right)^{\frac{1}{2}} T \\
\nonumber =&\ T M^{\frac{1}{12}} \\
\nonumber \ll&\ x^{1-\varepsilon^2},
\end{align}
where $M \ll X^{\frac{24}{43}}$ is required. This can be obtained by using $M \ll M H^{\frac{11}{70}} \ll X^{\frac{767}{1505}}$.

Finally, by combining all the cases above, Lemma~\ref{l32} is proved.
\end{proof}

\begin{lemma}\label{l33}
Suppose that $N Q R K=X$, and $N$, $Q$ and $R$ satisfy one of the following 10 conditions:

(1) $X^{\frac{27267}{66994}} \ll N Q \ll X^{\frac{231}{559}},\ X^{\frac{79}{817}} \ll R \ll Q \ll (N Q)^{-\frac{35}{23}} X^{\frac{767}{989}} $;

(2) $X^{\frac{231}{559}} \ll N Q \ll X^{\frac{3275}{7826}},\ X^{\frac{79}{817}} \ll R \ll Q \ll (N Q)^{-1} X^{\frac{313}{559}} $;

(3) $X^{\frac{3275}{7826}} \ll N Q \ll X^{\frac{15005}{33497}},\ X^{\frac{79}{817}} \ll R \ll Q \ll (N Q)^{-\frac{41}{27}} X^{\frac{902}{1161}} $;

(4) $X^{\frac{13074}{28595}} \ll N Q \ll X^{\frac{20}{43}},\ X^{\frac{79}{817}} \ll R \ll Q \ll (N Q)^{-\frac{1}{5}} X^{\frac{12}{43}} $;

(5) $X^{\frac{20}{43}} \ll N Q \ll X^{\frac{41}{86}},\ X^{\frac{79}{817}} \ll R \ll Q \ll (N Q)^{\frac{1}{7}} X^{\frac{55}{301}} $;

(6) $X^{\frac{41}{86}} \ll N Q \ll X^{\frac{227}{473}},\ X^{\frac{79}{817}} \ll R \ll Q \ll (N Q)^{-1} X^{\frac{219}{301}} $;

(7) $X^{\frac{227}{473}} \ll N Q \ll X^{\frac{2333}{4859}},\ X^{\frac{79}{817}} \ll R \ll Q \ll (N Q)^{-\frac{58}{9}} X^{\frac{1264}{387}} $;

(8) $X^{\frac{2333}{4859}} \ll N Q \ll X^{\frac{2501}{5203}},\ X^{\frac{79}{817}} \ll R \ll Q \ll (N Q)^{-\frac{1}{6}} X^{\frac{65}{258}} $;

(9) $X^{\frac{2501}{5203}} \ll N Q \ll X^{\frac{499}{1032}},\ X^{\frac{79}{817}} \ll R \ll Q \ll (N Q)^{-2} X^{\frac{536}{473}} $;

(10) $X^{\frac{499}{1032}} \ll N Q \ll X^{\frac{28277}{57190}},\ X^{\frac{79}{817}} \ll R \ll Q \ll (N Q)^{-\frac{70}{11}} X^{\frac{1534}{473}} $.

Assume further that for $(\log X)^{B/ \varepsilon} \leqslant |t| \leqslant 2 X, N(b+i t) Q(b+i t) \ll (\log x)^{-B/ \varepsilon}$ and $R(b+i t) \ll (\log x)^{-B/ \varepsilon}$. Then for $(\log X)^{B/ \varepsilon} \leqslant T \leqslant X$, we have
$$
\left( \min \left(\eta, \frac{1}{T}\right) \right)^{2} \int_{T}^{2 T}|N(b+i t) Q(b+i t) R(b+i t) K(b+i t)|^{2} d t \ll \eta^{2} (\log x)^{-10 B}.
$$
\end{lemma}
\begin{proof}
The proof is similar to that of [\cite{Jia120}, Lemma 13]. Let $M(s)=N(s)Q(s)$ and $H(s)=R(s)$. Then by Lemma~\ref{l32}, Lemma~\ref{l33} is proved.
\end{proof}

\begin{lemma}\label{l34}
Suppose that $N Q R K=X$, and $N$, $Q$ and $R$ satisfy $X^{\frac{79}{817}} \ll R \ll Q$ and one of the following 9 conditions:

(1) $(N R) Q \ll X^{\frac{673}{1247}},\ X^{\frac{82}{473}} \ll Q,\ (N R)^{29} Q^{-1} \ll X^{\frac{427}{43}},\ X^{\frac{246}{817}} \ll (N R),\ (N R)^{-1} Q^{29} \ll X^{\frac{263}{43}},\ X^{\frac{246}{43}} \ll (N R)^{12} Q^{11}$;

(2) $(N R) Q \ll X^{\frac{223}{387}},\ (N R)^{29} Q^{19} \ll X^{\frac{632}{43}},\ X^{\frac{328}{387}} \ll (N R)^{2} Q,\ (N R)^{2} Q^{11} \ll X^{\frac{120}{43}},\ X^{\frac{1230}{43}} \ll (N R)^{58} Q^{49}$;

(3) $(N R) Q \ll X^{\frac{268}{473}},\ X^{\frac{41}{215}} \ll Q,\ (N R)^{6} Q \ll X^{\frac{94}{43}},\ X^{\frac{82}{301}} \ll (N R) \ll X^{\frac{16}{43}},\ (N R) Q^{8} \ll X^{\frac{98}{43}},\ X^{\frac{123}{43}} \ll (N R)^{6} Q^{5}$;

(4) $(N R) Q \ll X^{\frac{571}{817}},\ X^{\frac{82}{473}} \ll Q,\ (N R)^{12} Q \ll X^{\frac{270}{43}},\ X^{\frac{574}{1247}} \ll (N R),\ (N R)^{-1} Q^{19} \ll X^{\frac{161}{43}},\ X^{\frac{328}{43}} \ll (N R)^{12} Q^{11}$;

(5) $(N R)^{2} Q \ll X^{\frac{446}{387}},\ (N R)^{58} Q^{9} \ll X^{\frac{1264}{43}},\ X^{\frac{164}{387}} \ll (N R),\ (N R) Q^{5} \ll X^{\frac{60}{43}},\ X^{\frac{615}{43}} \ll (N R)^{29} Q^{10}$;

(6) $X^{\frac{27}{43}} \ll (N R) Q \ll X^{\frac{219}{301}},\ X^{\frac{41}{215}} \ll Q,\ (N R)^{6} Q \ll X^{\frac{135}{43}},\ X^{\frac{205}{473}} \ll (N R),\ (N R)^{-1} Q^{7} \ll X^{\frac{55}{43}},\ X^{\frac{164}{43}} \ll (N R)^{6} Q^{5}$;

(7) $(N R) Q \ll X^{\frac{268}{473}},\ (N R)^{35} Q^{23} \ll X^{\frac{767}{43}},\ X^{\frac{410}{473}} \ll (N R)^{2} Q,\ (N R)^{2} Q^{13} \ll X^{\frac{130}{43}},\ X^{\frac{1476}{43}} \ll (N R)^{70} Q^{59}$;

(8) $(N R) Q \ll X^{\frac{313}{559}},\ (N R)^{41} Q^{27} \ll X^{\frac{902}{43}},\ X^{\frac{492}{559}} \ll (N R)^{2} Q,\ (N R)^{2} Q^{15} \ll X^{\frac{138}{43}},\ X^{\frac{1722}{43}} \ll (N R)^{82} Q^{69}$;

(9) $(N R)^{2} Q \ll X^{\frac{536}{473}},\ (N R)^{70} Q^{11} \ll X^{\frac{1534}{43}},\ X^{\frac{205}{473}} \ll (N R),\ (N R) Q^{6} \ll X^{\frac{65}{43}},\ X^{\frac{738}{43}} \ll (N R)^{35} Q^{12}$.

Assume further that for $(\log X)^{B/ \varepsilon} \leqslant |t| \leqslant 2 X, N(b+i t) R(b+i t) \ll (\log x)^{-B/ \varepsilon}$ and $Q(b+i t) \ll (\log x)^{-B/ \varepsilon}$. Then for $(\log X)^{B/ \varepsilon} \leqslant T \leqslant X$, we have
$$
\left( \min \left(\eta, \frac{1}{T}\right) \right)^{2} \int_{T}^{2 T}|N(b+i t) Q(b+i t) R(b+i t) K(b+i t)|^{2} d t \ll \eta^{2} (\log x)^{-10 B}.
$$
\end{lemma}
\begin{proof}
Let $M(s)=N(s) R(s)$ and $H(s)=Q(s)$ and by Lemma~\ref{l32}, Lemma~\ref{l34} is proved.
\end{proof}

\begin{lemma}\label{l35}
Suppose that $N Q R K=X$, and $N$, $Q$ and $R$ satisfy $X^{\frac{79}{817}} \ll R \ll Q$ and one of the following 9 conditions:

(1) $(Q R) N \ll X^{\frac{673}{1247}},\ X^{\frac{82}{473}} \ll N,\ (Q R)^{29} N^{-1} \ll X^{\frac{427}{43}},\ X^{\frac{246}{817}} \ll (Q R),\ (Q R)^{-1} N^{29} \ll X^{\frac{263}{43}},\ X^{\frac{246}{43}} \ll (Q R)^{12} N^{11}$;

(2) $(Q R) N \ll X^{\frac{223}{387}},\ (Q R)^{29} N^{19} \ll X^{\frac{632}{43}},\ X^{\frac{328}{387}} \ll (Q R)^{2} N,\ (Q R)^{2} N^{11} \ll X^{\frac{120}{43}},\ X^{\frac{1230}{43}} \ll (Q R)^{58} N^{49}$;

(3) $(Q R) N \ll X^{\frac{268}{473}},\ X^{\frac{41}{215}} \ll N,\ (Q R)^{6} N \ll X^{\frac{94}{43}},\ X^{\frac{82}{301}} \ll (Q R) \ll X^{\frac{16}{43}},\ (Q R) N^{8} \ll X^{\frac{98}{43}},\ X^{\frac{123}{43}} \ll (Q R)^{6} N^{5}$;

(4) $(Q R) N \ll X^{\frac{571}{817}},\ X^{\frac{82}{473}} \ll N,\ (Q R)^{12} N \ll X^{\frac{270}{43}},\ X^{\frac{574}{1247}} \ll (Q R),\ (Q R)^{-1} N^{19} \ll X^{\frac{161}{43}},\ X^{\frac{328}{43}} \ll (Q R)^{12} N^{11}$;

(5) $(Q R)^{2} N \ll X^{\frac{446}{387}},\ (Q R)^{58} N^{9} \ll X^{\frac{1264}{43}},\ X^{\frac{164}{387}} \ll (Q R),\ (Q R) N^{5} \ll X^{\frac{60}{43}},\ X^{\frac{615}{43}} \ll (Q R)^{29} N^{10}$;

(6) $X^{\frac{27}{43}} \ll (Q R) N \ll X^{\frac{219}{301}},\ X^{\frac{41}{215}} \ll N,\ (Q R)^{6} N \ll X^{\frac{135}{43}},\ X^{\frac{205}{473}} \ll (Q R),\ (Q R)^{-1} N^{7} \ll X^{\frac{55}{43}},\ X^{\frac{164}{43}} \ll (Q R)^{6} N^{5}$;

(7) $(Q R) N \ll X^{\frac{268}{473}},\ (Q R)^{35} N^{23} \ll X^{\frac{767}{43}},\ X^{\frac{410}{473}} \ll (Q R)^{2} N,\ (Q R)^{2} N^{13} \ll X^{\frac{130}{43}},\ X^{\frac{1476}{43}} \ll (Q R)^{70} N^{59}$;

(8) $(Q R) N \ll X^{\frac{313}{559}},\ (Q R)^{41} N^{27} \ll X^{\frac{902}{43}},\ X^{\frac{492}{559}} \ll (Q R)^{2} N,\ (Q R)^{2} N^{15} \ll X^{\frac{138}{43}},\ X^{\frac{1722}{43}} \ll (Q R)^{82} N^{69}$;

(9) $(Q R)^{2} N \ll X^{\frac{536}{473}},\ (Q R)^{70} N^{11} \ll X^{\frac{1534}{43}},\ X^{\frac{205}{473}} \ll (Q R),\ (Q R) N^{6} \ll X^{\frac{65}{43}},\ X^{\frac{738}{43}} \ll (Q R)^{35} N^{12}$.

Assume further that for $(\log X)^{B/ \varepsilon} \leqslant |t| \leqslant 2 X, Q(b+i t) R(b+i t) \ll (\log x)^{-B/ \varepsilon}$ and $N(b+i t) \ll (\log x)^{-B/ \varepsilon}$. Then for $(\log X)^{B/ \varepsilon} \leqslant T \leqslant X$, we have
$$
\left( \min \left(\eta, \frac{1}{T}\right) \right)^{2} \int_{T}^{2 T}|N(b+i t) Q(b+i t) R(b+i t) K(b+i t)|^{2} d t \ll \eta^{2} (\log x)^{-10 B}.
$$
\end{lemma}
\begin{proof}
Let $M(s)=Q(s) R(s)$ and $H(s)=N(s)$ and by Lemma~\ref{l32}, Lemma~\ref{l35} is proved.
\end{proof}

\begin{lemma}\label{l36}
Suppose that $N Q R K=X$, and $N$, $Q$ and $R$ satisfy $X^{\frac{79}{817}} \ll R \ll Q$ and one of the following 9 conditions:

(1) $N (Q R) \ll X^{\frac{673}{1247}},\ X^{\frac{82}{473}} \ll (Q R),\ N^{29} (Q R)^{-1} \ll X^{\frac{427}{43}},\ X^{\frac{246}{817}} \ll N,\ N^{-1} (Q R)^{29} \ll X^{\frac{263}{43}},\ X^{\frac{246}{43}} \ll N^{12} (Q R)^{11}$;

(2) $N (Q R) \ll X^{\frac{223}{387}},\ N^{29} (Q R)^{19} \ll X^{\frac{632}{43}},\ X^{\frac{328}{387}} \ll N^{2} (Q R),\ N^{2} (Q R)^{11} \ll X^{\frac{120}{43}},\ X^{\frac{1230}{43}} \ll N^{58} (Q R)^{49}$;

(3) $N (Q R) \ll X^{\frac{268}{473}},\ X^{\frac{41}{215}} \ll (Q R),\ N^{6} (Q R) \ll X^{\frac{94}{43}},\ X^{\frac{82}{301}} \ll N \ll X^{\frac{16}{43}},\ N (Q R)^{8} \ll X^{\frac{98}{43}},\ X^{\frac{123}{43}} \ll N^{6} (Q R)^{5}$;

(4) $N (Q R) \ll X^{\frac{571}{817}},\ X^{\frac{82}{473}} \ll (Q R),\ N^{12} (Q R) \ll X^{\frac{270}{43}},\ X^{\frac{574}{1247}} \ll N,\ N^{-1} (Q R)^{19} \ll X^{\frac{161}{43}},\ X^{\frac{328}{43}} \ll N^{12} (Q R)^{11}$;

(5) $N^{2} (Q R) \ll X^{\frac{446}{387}},\ N^{58} (Q R)^{9} \ll X^{\frac{1264}{43}},\ X^{\frac{164}{387}} \ll N,\ N (Q R)^{5} \ll X^{\frac{60}{43}},\ X^{\frac{615}{43}} \ll N^{29} (Q R)^{10}$;

(6) $X^{\frac{27}{43}} \ll N (Q R) \ll X^{\frac{219}{301}},\ X^{\frac{41}{215}} \ll (Q R),\ N^{6} (Q R) \ll X^{\frac{135}{43}},\ X^{\frac{205}{473}} \ll N,\ N^{-1} (Q R)^{7} \ll X^{\frac{55}{43}},\ X^{\frac{164}{43}} \ll N^{6} (Q R)^{5}$;

(7) $N (Q R) \ll X^{\frac{268}{473}},\ N^{35} (Q R)^{23} \ll X^{\frac{767}{43}},\ X^{\frac{410}{473}} \ll N^{2} (Q R),\ N^{2} (Q R)^{13} \ll X^{\frac{130}{43}},\ X^{\frac{1476}{43}} \ll N^{70} (Q R)^{59}$;

(8) $N (Q R) \ll X^{\frac{313}{559}},\ N^{41} (Q R)^{27} \ll X^{\frac{902}{43}},\ X^{\frac{492}{559}} \ll N^{2} (Q R),\ N^{2} (Q R)^{15} \ll X^{\frac{138}{43}},\ X^{\frac{1722}{43}} \ll N^{82} (Q R)^{69}$;

(9) $N^{2} (Q R) \ll X^{\frac{536}{473}},\ N^{70} (Q R)^{11} \ll X^{\frac{1534}{43}},\ X^{\frac{205}{473}} \ll N,\ N (Q R)^{6} \ll X^{\frac{65}{43}},\ X^{\frac{738}{43}} \ll N^{35} (Q R)^{12}$.

Assume further that for $(\log X)^{B/ \varepsilon} \leqslant |t| \leqslant 2 X, Q(b+i t) R(b+i t) \ll (\log x)^{-B/ \varepsilon}$ and $N(b+i t) \ll (\log x)^{-B/ \varepsilon}$. Then for $(\log X)^{B/ \varepsilon} \leqslant T \leqslant X$, we have
$$
\left( \min \left(\eta, \frac{1}{T}\right) \right)^{2} \int_{T}^{2 T}|N(b+i t) Q(b+i t) R(b+i t) K(b+i t)|^{2} d t \ll \eta^{2} (\log x)^{-10 B}.
$$
\end{lemma}
\begin{proof}
Let $M(s)=N(s)$ and $H(s)=Q(s) R(s)$ and by Lemma~\ref{l32}, Lemma~\ref{l36} is proved.
\end{proof}

\section{Arithmetic Information II}
In this section we are looking for more Type-I information. In \cite{Jia120}, Jia used a mean value bound of Deshouillers and Iwaniec \cite{DeshouillersIwaniec}, which makes an approximation to the sixth-power moment of the Riemann zeta-function. Now we shall use another mean value bound of Watt \cite{WattTheorem}, which is stronger than that of Deshouillers and Iwaniec when the length of interval is $n^{\frac{1}{21.5}+\varepsilon}$.

\begin{lemma}\label{l41}
Let $T \geqslant 1$, then
$$ 
\int_{T}^{2 T}\left|L\left(\frac{1}{2}+i t\right)\right|^{4}\left|N\left(\frac{1}{2}+i t\right)\right|^{2} d t \ll \left(T+N^{2} T^{\frac{1}{2}}+N L^{2} T^{-2}\right) T^{\varepsilon^2}.
$$
\end{lemma}
\begin{proof}
For $c_1 L \leqslant T^{\frac{1}{2}}$ and $N \leqslant T$, by the main theorem proved in \cite{WattTheorem} we have
$$ 
\int_{T}^{2 T}\left|L\left(\frac{1}{2}+i t\right)\right|^{4}\left|N\left(\frac{1}{2}+i t\right)\right|^{2} d t \ll \left(T+N^{2} T^{\frac{1}{2}}\right) T^{\varepsilon^2}.
$$
For $c_1 L \leqslant T^{\frac{1}{2}}$ and $N > T$, by the classical mean value estimate (see \cite{Jia120}) we have
\begin{align}
\nonumber \int_{T}^{2 T}\left|L\left(\frac{1}{2}+i t\right)\right|^{4}\left|N\left(\frac{1}{2}+i t\right)\right|^{2} d t \ll&\ \left(L^{2} N + T\right) \left(L N\right)^{\varepsilon^2} \\
\nonumber \ll&\ \left(T+N^{2} T^{\frac{1}{2}}\right) T^{\varepsilon^2}.
\end{align}

When $T^{\frac{1}{2}} < c_1 L \leqslant 2T$, by a reflection principle based on an approximate functional equation, we can get the same bound as above. By this process we may replace $L$ by $L_0 \sim T/L$, so that $L_0 \ll T^{\frac{1}{2}}$. For this, one can see [\cite{DeshouillersIwaniec}, Section 2], [\cite{BHP}, Lemma 2] or [\cite{AlweissLuo}, Lemma 5.2] for a detailed proof with $N \leqslant T$. For $N > T$, by the classical mean value estimate we have
\begin{align}
\nonumber \int_{T}^{2 T}\left|L\left(\frac{1}{2}+i t\right)\right|^{4}\left|N\left(\frac{1}{2}+i t\right)\right|^{2} d t \ll&\ \left(L_0^{2} N + T\right) \left(L_0 N\right)^{\varepsilon^2} + \text{Error} \\
\nonumber \ll&\ \left(T+N^{2} T^{\frac{1}{2}}\right) T^{\varepsilon^2}.
\end{align}

When $2T < c_1 L$, by the method in \cite{Jia120} we have
$$ 
\int_{T}^{2 T}\left|L\left(\frac{1}{2}+i t\right)\right|^{4}\left|N\left(\frac{1}{2}+i t\right)\right|^{2} d t \ll N L^{2} T^{-2}.
$$
(Actually we have a better bound $L^2 T^{-4} (N+T)$ in this case, but this won't bring any improvement to the next lemma.)

Finally, by combining all the cases above, Lemma~\ref{l41} is proved.
\end{proof}

\begin{lemma}\label{l42}
Suppose that $M H L=X$, and $M$ and $H$ satisfy one of the following 2 conditions:

(1) $M^{2} H \ll X^{2-\frac{20.5}{21.5}},\ M^{4} H^{6} \ll X^{4-\frac{20.5}{21.5}},\ H^{4} \ll X^{4-\frac{61.5}{21.5}}$;

(2) $M \ll X^{\frac{45}{86}},\ H \ll X^{\frac{41}{344}} $.

Then for $\sqrt{L} \leqslant T \leqslant X$, we have
$$
\left( \min \left(\eta, \frac{1}{T}\right) \right)^{2} \int_{T}^{2 T}|M(b+i t) H(b+i t) L(b+i t)|^{2} d t \ll \eta^{2} x^{-2 \varepsilon^2}.
$$
\end{lemma}
\begin{proof}
We can prove this by using Lemma~\ref{l41} and the methods in [\cite{Jia120}, Lemmas 3,4]. For condition (1) we apply Lemma~\ref{l41} to $L$ and $N$, while for condition (2) we apply Lemma~\ref{l41} to $L$ and $N^2$. One can see \cite{Jia120} for an explanation.
\end{proof}

\begin{lemma}\label{l43}
Suppose that $N Q R L=X$, and $N$, $Q$ and $R$ satisfy $X^{\frac{79}{817}} \ll R \ll Q$ and one of the following 9 conditions:

(1) $(N Q)^{2} R \ll X^{2-\frac{20.5}{21.5}},\ (N Q)^{4} R^{6} \ll X^{4-\frac{20.5}{21.5}},\ R^{4} \ll X^{4-\frac{61.5}{21.5}} $;

(2) $(N Q) \ll X^{\frac{45}{86}},\ R \ll X^{\frac{41}{344}} $;

(3) $X^{\frac{55}{129}} \ll (N Q) \ll X^{\frac{20}{43}},\ R \ll (N Q)^{-\frac{1}{5}} X^{\frac{12}{43}} $;

(4) $X^{\frac{20}{43}} \ll (N Q) \ll X^{\frac{41}{86}},\ R \ll (N Q)^{\frac{1}{7}} X^{\frac{55}{301}} $;

(5) $X^{\frac{41}{86}} \ll (N Q) \ll X^{\frac{227}{473}},\ R \ll (N Q)^{-1} X^{\frac{219}{301}} $;

(6) $X^{\frac{227}{473}} \ll (N Q) \ll X^{\frac{2333}{4859}},\ R \ll (N Q)^{-\frac{58}{9}} X^{\frac{1264}{387}} $;

(7) $X^{\frac{2333}{4859}} \ll (N Q) \ll X^{\frac{2501}{5203}},\ R \ll (N Q)^{-\frac{1}{6}} X^{\frac{65}{258}} $;

(8) $X^{\frac{2501}{5203}} \ll (N Q) \ll X^{\frac{499}{1032}},\ R \ll (N Q)^{-2} X^{\frac{536}{473}} $;

(9) $X^{\frac{499}{1032}} \ll (N Q) \ll X^{\frac{28277}{57190}},\ R \ll (N Q)^{-\frac{70}{11}} X^{\frac{1534}{473}} $.

Assume further that for $\sqrt{L} \leqslant |t| \leqslant 2 X, N(b+i t) Q(b+i t) \ll (\log x)^{-B/ \varepsilon}$ and $R(b+i t) \ll (\log x)^{-B/ \varepsilon}$. Then for $\sqrt{L} \leqslant T \leqslant X$, we have
$$
\left( \min \left(\eta, \frac{1}{T}\right) \right)^{2} \int_{T}^{2 T}|N(b+i t) Q(b+i t) R(b+i t) L(b+i t)|^{2} d t \ll \eta^{2} (\log x)^{-10 B}.
$$
\end{lemma}
\begin{proof}
The proof is similar to that of [\cite{Jia120}, Lemmas 14,15]. Let $M(s)=N(s)Q(s)$ and $H(s)=R(s)$. Then by Lemma~\ref{l42} and Lemma~\ref{l32}, Lemma~\ref{l43} is proved.
\end{proof}

\begin{lemma}\label{l44}
Suppose that $N Q R L=X$, and $N$, $Q$ and $R$ satisfy $X^{\frac{79}{817}} \ll R \ll Q$ and one of the following 9 conditions:

(1) $(Q R)^{2} N \ll X^{2-\frac{20.5}{21.5}},\ (Q R)^{4} N^{6} \ll X^{4-\frac{20.5}{21.5}},\ N^{4} \ll X^{4-\frac{61.5}{21.5}} $;

(2) $(Q R) \ll X^{\frac{45}{86}},\ N \ll X^{\frac{41}{344}} $;

(3) $X^{\frac{55}{129}} \ll (Q R) \ll X^{\frac{20}{43}},\ N \ll (Q R)^{-\frac{1}{5}} X^{\frac{12}{43}} $;

(4) $X^{\frac{20}{43}} \ll (Q R) \ll X^{\frac{41}{86}},\ N \ll (Q R)^{\frac{1}{7}} X^{\frac{55}{301}} $;

(5) $X^{\frac{41}{86}} \ll (Q R) \ll X^{\frac{227}{473}},\ N \ll (Q R)^{-1} X^{\frac{219}{301}} $;

(6) $X^{\frac{227}{473}} \ll (Q R) \ll X^{\frac{2333}{4859}},\ N \ll (Q R)^{-\frac{58}{9}} X^{\frac{1264}{387}} $;

(7) $X^{\frac{2333}{4859}} \ll (Q R) \ll X^{\frac{2501}{5203}},\ N \ll (Q R)^{-\frac{1}{6}} X^{\frac{65}{258}} $;

(8) $X^{\frac{2501}{5203}} \ll (Q R) \ll X^{\frac{499}{1032}},\ N \ll (Q R)^{-2} X^{\frac{536}{473}} $;

(9) $X^{\frac{499}{1032}} \ll (Q R) \ll X^{\frac{28277}{57190}},\ N \ll (Q R)^{-\frac{70}{11}} X^{\frac{1534}{473}} $.

Assume further that for $\sqrt{L} \leqslant |t| \leqslant 2 X, Q(b+i t) R(b+i t) \ll (\log x)^{-B/ \varepsilon}$ and $N(b+i t) \ll (\log x)^{-B/ \varepsilon}$. Then for $\sqrt{L} \leqslant T \leqslant X$, we have
$$
\left( \min \left(\eta, \frac{1}{T}\right) \right)^{2} \int_{T}^{2 T}|N(b+i t) Q(b+i t) R(b+i t) L(b+i t)|^{2} d t \ll \eta^{2} (\log x)^{-10 B}.
$$
\end{lemma}
\begin{proof}
Let $M(s)=Q(s)R(s)$ and $H(s)=N(s)$. Then by Lemma~\ref{l42} and Lemma~\ref{l32}, Lemma~\ref{l44} is proved.
\end{proof}

\section{Asymptotic formulas}

\begin{lemma}\label{l51}
Suppose that $M \ll X^{\frac{45}{86}}$ and $H \ll X^{\frac{41}{344}}$. Then for real numbers $x \in [X, 2 X]$, except for $O\left(X (\log X)^{-B} \right)$ values, we have
$$
\sum_{\substack{m \sim M \\ h \sim H}} a_m b_h \left(\sum_{x<m h l \leqslant x+\eta x} 1-\frac{\eta x}{m h}\right)\ll \eta x (\log x)^{-B}.
$$
\end{lemma}
\begin{proof}
The proof is similar to that of [\cite{Jia120}, Lemma 16] where the condition (2) of Lemma~\ref{l42} is used.
\end{proof}

\begin{lemma}\label{l52}
Suppose that $N Q R L = X$, and $N$, $Q$, $R$ satisfy one of the 18 conditions in Lemma~\ref{l43} and Lemma~\ref{l44}. Assume further that for $\sqrt{L} \leqslant |t| \leqslant 2 X, N(b+i t) Q(b+i t) R(b+i t) \ll (\log x)^{-B/ \varepsilon}$. Then for real numbers $x \in [X, 2 X]$, except for $O\left(X (\log X)^{-B} \right)$ values, we have
$$
\sum_{\substack{n \sim N \\ q \sim Q \\ r \sim R}} a_n b_q c_r \left(\sum_{x< n q r l \leqslant x+\eta x} 1-\frac{\eta x}{n q r}\right)\ll \eta x (\log x)^{-B}.
$$
\end{lemma}
\begin{proof}
The proof is similar to that of [\cite{Jia120}, Lemma 17].
\end{proof}

\begin{lemma}\label{l61}
Suppose that $X^{\frac{738}{817}} \ll M \ll X^{1-\delta}$, $a_m \geqslant 0$, and $a_m = 0$ if $m$ has a prime factor $<X^{\delta}$. Then for real numbers $x \in [X, 2 X]$, except for $O\left(X (\log X)^{-B} \right)$ values, we have
$$
\sum_{\substack{x<m p \leqslant x+\eta x \\
m \sim M}} a_m =\eta\left(1+O\left(\frac{1}{\log x}\right)\right) \sum_{\substack{x<m p \leqslant 2 x \\ m \sim M}} a_m +O\left(\eta x (\log x)^{-B}\right).
$$
\end{lemma}
\begin{proof}
The proof is similar to that of [\cite{Jia120}, Lemma 18] where Lemma~\ref{l31} is used.
\end{proof}

\begin{lemma}\label{l62} Suppose that $M H K=X$, $c_k = \Lambda(k)$, $X^{\frac{79}{817}} \ll M H \ll X^{\frac{738}{817}}$, and $a_m, b_h \geqslant 0$. Suppose that $a_m=0$ if $m$ has a prime factor $<X^{\delta}$, and $b_h=0$ if $h$ has a prime factor $<X^{\delta}$. If we have
$$
\left( \min \left(\eta, \frac{1}{T}\right) \right)^{2} \int_{T}^{2 T}|M(b+i t) H(b+i t) K(b+i t)|^{2} d t \ll \eta^{2} (\log x)^{-10 B}
$$
for $(\log X)^{B/ \varepsilon} \leqslant T \leqslant X$, then for real numbers $x \in [X, 2 X]$, except for $O\left(X (\log X)^{-B} \right)$ values, we have
$$
\sum_{\substack{x<m h p \leqslant x+\eta x \\
m \sim M \\ h \sim H}} a_m b_h =\eta\left(1+O\left(\frac{1}{\log x}\right)\right) \sum_{\substack{x<m h p \leqslant 2 x \\ m \sim M \\ h \sim H}} a_m b_h +O\left(\eta x (\log x)^{-B}\right).
$$
\end{lemma}
\begin{proof}
The proof is similar to that of [\cite{Jia1132}, Lemma 4].
\end{proof}

\begin{lemma}\label{l63} Suppose that $M N H K=X$, $d_k = \Lambda(k)$, and $a_m, b_n, c_h \geqslant 0$. Suppose that $a_m=0$ if $m$ has a prime factor $<X^{\delta}$, $b_n=0$ if $n$ has a prime factor $<X^{\delta}$, and $c_h=0$ if $h$ has a prime factor $<X^{\delta}$. If we have
$$
\left( \min \left(\eta, \frac{1}{T}\right) \right)^{2} \int_{T}^{2 T}|M(b+i t) N(b+i t) H(b+i t) K(b+i t)|^{2} d t \ll \eta^{2} (\log x)^{-10 B}
$$
for $(\log X)^{B/ \varepsilon} \leqslant T \leqslant X$, then for real numbers $x \in [X, 2 X]$, except for $O\left(X (\log X)^{-B} \right)$ values, we have
$$
\sum_{\substack{x<m n h p \leqslant x+\eta x \\
m \sim M \\ n \sim N \\ h \sim H}} a_m b_n c_h =\eta\left(1+O\left(\frac{1}{\log x}\right)\right) \sum_{\substack{x<m h p \leqslant 2 x \\ m \sim M \\ n \sim N \\ h \sim H}} a_m b_n c_h +O\left(\eta x (\log x)^{-B}\right).
$$
\end{lemma}
\begin{proof}
The proof is similar to that of [\cite{Jia1132}, Lemma 4].
\end{proof}

\begin{lemma}\label{TypeI}
Suppose that $P_0 P_1 P_2 \cdots P_n = X$, $P_n < \cdots < P_2 < P_1 < X^{\frac{1}{2}}$ and $p_i \sim P_i$ for $1 \leqslant i$. Suppose that $P_1, P_2, \ldots, P_n, X^{10^{-1000}}$ can be partitioned into $M, H$ satisfy Lemma~\ref{l51} or $N, Q, R$ satisfy Lemma~\ref{l52}, Then for real numbers $x \in [X, 2 X]$, except for $O\left(X (\log X)^{-B} \right)$ values, we have
$$
\sum_{(t_1, \ldots, t_n)} S\left(\mathcal{A}_{p_1 p_2 \cdots p_n}, X^{\frac{79}{817}}\right) = \eta \sum_{(t_1, \ldots, t_n)} S\left(\mathcal{B}_{p_1 p_2 \cdots p_n}, X^{\frac{79}{817}}\right) + O\left(\frac{\varepsilon \eta x}{\log x}\right).
$$
\end{lemma}
\begin{proof}
By Buchstab's identity, we have
$$
\nonumber \sum_{(t_1, \ldots, t_n)} S\left(\mathcal{A}_{p_1 p_2 \cdots p_n}, X^{\frac{79}{817}}\right) = \sum_{(t_1, \ldots, t_n)} S\left(\mathcal{A}_{p_1 p_2 \cdots p_n}, X^{\delta}\right) - \sum_{\substack{(t_1, \ldots, t_n) \\ \delta \leqslant t_{n+1} < \frac{79}{817}}} S\left(\mathcal{A}_{p_1 p_2 \cdots p_n p_{n+1}}, p_{n+1}\right).
$$
We use Iwaniec's linear sieve (taking $D = X^{10^{-1000}}, z = X^{\delta}$) together with Lemma~\ref{l51} or Lemma~\ref{l52} (depending on the region that $P_1, P_2, \ldots, P_n, X^{10^{-1000}}$ are partitioned into) to deal with the first term on the right-hand side. The functions $F(s)$ and $f(s)$ defined by the following differential-difference equation
\begin{align*}
\begin{cases}
F(s)=\frac{2 e^{\gamma}}{s}, \quad f(s)=0, \quad &0<s \leqslant 2,\\
(s F(s))^{\prime}=f(s-1), \quad(s f(s))^{\prime}=F(s-1), \quad &s \geqslant 2 ,
\end{cases}
\end{align*}
give the same value $1$ with negligible error $O\left(\varepsilon^2\right)$, where $\gamma$ is the Euler's constant. Hence we have
$$
\sum_{(t_1, \ldots, t_n)} S\left(\mathcal{A}_{p_1 p_2 \cdots p_n}, X^{\delta}\right) = \eta \sum_{(t_1, \ldots, t_n)} S\left(\mathcal{B}_{p_1 p_2 \cdots p_n}, X^{\delta}\right) + O\left(\frac{\varepsilon \eta x}{\log x}\right).
$$
Let $M = \frac{X}{P_{n+1}}$. Clearly we have $X^{\frac{738}{817}} \ll M \ll X^{1-\delta}$. By Lemma~\ref{l61}, we have
$$
\sum_{\substack{(t_1, \ldots, t_n) \\ \delta \leqslant t_{n+1} < \frac{79}{817}}} S\left(\mathcal{A}_{p_1 p_2 \cdots p_n p_{n+1}}, p_{n+1}\right) = \eta \sum_{\substack{(t_1, \ldots, t_n) \\ \delta \leqslant t_{n+1} < \frac{79}{817}}} S\left(\mathcal{B}_{p_1 p_2 \cdots p_n p_{n+1}}, p_{n+1}\right) + O\left(\frac{\varepsilon \eta x}{\log x}\right).
$$
Now, Lemma~\ref{TypeI} is proved by another application of Buchstab's identity
$$
\sum_{(t_1, \ldots, t_n)} S\left(\mathcal{B}_{p_1 p_2 \cdots p_n}, X^{\delta}\right) - \sum_{\substack{(t_1, \ldots, t_n) \\ \delta \leqslant t_{n+1} < \frac{79}{817}}} S\left(\mathcal{B}_{p_1 p_2 \cdots p_n p_{n+1}}, p_{n+1}\right) = \nonumber \sum_{(t_1, \ldots, t_n)} S\left(\mathcal{B}_{p_1 p_2 \cdots p_n}, X^{\frac{79}{817}}\right).
$$
We remark that in many places, we ignore the additional variable $X^{10^{-1000}}$ when applying this lemma since it only produces an extremely small difference (less than $10^{-10}$).
\end{proof}

\section{The final decomposition}
In this section, sets $\mathcal{A}$ and $\mathcal{B}$ are defined respectively. Let $\omega(u)$ denote the Buchstab function determined by the following differential-difference equation
\begin{align*}
\begin{cases}
\omega(u)=\frac{1}{u}, & \quad 1 \leqslant u \leqslant 2, \\
(u \omega(u))^{\prime}= \omega(u-1), & \quad u \geqslant 2 .
\end{cases}
\end{align*}
Moreover, we have the upper and lower bounds for $\omega(u)$:
\begin{align*}
\omega(u) \geqslant \omega_{0}(u) =
\begin{cases}
\frac{1}{u}, & \quad 1 \leqslant u < 2, \\
\frac{1+\log(u-1)}{u}, & \quad 2 \leqslant u < 3, \\
\frac{1+\log(u-1)}{u} + \frac{1}{u} \int_{2}^{u-1}\frac{\log(t-1)}{t} d t, & \quad 3 \leqslant u < 4, \\
0.5612, & \quad u \geqslant 4, \\
\end{cases}
\end{align*}
\begin{align*}
\omega(u) \leqslant \omega_{1}(u) =
\begin{cases}
\frac{1}{u}, & \quad 1 \leqslant u < 2, \\
\frac{1+\log(u-1)}{u}, & \quad 2 \leqslant u < 3, \\
\frac{1+\log(u-1)}{u} + \frac{1}{u} \int_{2}^{u-1}\frac{\log(t-1)}{t} d t, & \quad 3 \leqslant u < 4, \\
0.5617, & \quad u \geqslant 4. \\
\end{cases}
\end{align*}
We shall use $\omega_0(u)$ and $\omega_1(u)$ to give numerical bounds for some sieve functions discussed below.

By [\cite{Jia120}, Lemma 21], we know that, for sufficiently large $z$,
\begin{equation}
S\left(\mathcal{B}, z\right) = \sum_{\substack{a \in \mathcal{B} \\ (a, P(z))=1}} 1 = (1+o(1)) \frac{x}{\log z} \omega\left(\frac{\log x}{\log z}\right),
\end{equation}
and we expect that the similar relation also holds for $S\left(\mathcal{A}, z\right)$:
\begin{equation}
S\left(\mathcal{A}, z\right) = \sum_{\substack{a \in \mathcal{A} \\ (a, P(z))=1}} 1 = (1+o(1)) \frac{\eta x}{\log z} \omega\left(\frac{\log x}{\log z}\right).
\end{equation}
If (3) holds for $S\left(\mathcal{A}, z\right)$, then we can deduce (5) easily from (3) and (4). Otherwise we must drop this $S\left(\mathcal{A}, z\right)$. We define the \textit{loss} from this term by the size of corresponding $S\left(\mathcal{B}, z\right)$:
\begin{equation}
S\left(\mathcal{B}, z\right) = (\textit{loss}+o(1)) \frac{\eta x}{\log x}.
\end{equation}
We note that we can only drop positive parts and the total loss of the dropped parts must be less than $1$.

Beginning with (2), we can easily give asymptotic formulas for $S_1$ and $S_2$ by Lemma~\ref{TypeI}. Before estimating $S_{3}$, we first define the disjoint regions $U_{01}$--$U_{03}$ as
\begin{align}
\nonumber U_{01} (t_1, t_2) :=&\ \left\{(t_1, t_2) \in U_{0101} \cup U_{0102} \cup \cdots \cup U_{0109}, \right. \\
\nonumber & \left. \quad \frac{79}{817} \leqslant t_1 < \frac{1}{2},\ \frac{79}{817} \leqslant t_2 < \min\left(t_1, \frac{1}{2}(1-t_1) \right) \right\}, \\
\nonumber U_{02} (t_1, t_2) :=&\ \left\{(t_1, t_2) \in U_{0201} \cup U_{0202} \cup \cdots \cup U_{0237},\ (t_1, t_2) \notin U_{01}, \right. \\
\nonumber & \left. \quad \frac{79}{817} \leqslant t_1 < \frac{1}{2},\ \frac{79}{817} \leqslant t_2 < \min\left(t_1, \frac{1}{2}(1-t_1) \right) \right\}, \\
\nonumber U_{03} (t_1, t_2) :=&\ \left\{(t_1, t_2) \in U_{0301} \cup U_{0302} \cup \cdots \cup U_{0318},\ (t_1, t_2) \notin U_{01} \cup U_{02}, \right. \\
\nonumber & \left. \quad \frac{79}{817} \leqslant t_1 < \frac{1}{2},\ \frac{79}{817} \leqslant t_2 < \min\left(t_1, \frac{1}{2}(1-t_1) \right) \right\},
\end{align}
where
\begin{align}
\nonumber U_{0101} (t_1, t_2) :=&\ \left\{t_1 + t_2 \leqslant \frac{673}{1247},\ \frac{82}{473} \leqslant t_2,\ 29 t_1 - t_2 \leqslant \frac{427}{43},\ \frac{246}{817} \leqslant t_1,\ 29 t_2 - t_1 \leqslant \frac{263}{43},\ \frac{246}{43} \leqslant 12 t_1 + 11 t_2 \right\}, \\
\nonumber U_{0102}(t_1, t_2) :=&\ \left\{ t_1 + t_2 \leqslant \frac{223}{387},\ 29 t_1 + 19 t_2 \leqslant \frac{632}{43},\ \frac{328}{387} \leqslant 2 t_1 + t_2,\ 2 t_1 + 11 t_2 \leqslant \frac{120}{43},\ \frac{1230}{43} \leqslant 58 t_1 + 49 t_2 \right\}, \\
\nonumber U_{0103}(t_1, t_2) :=&\ \left\{ t_1 + t_2 \leqslant \frac{268}{473},\ \frac{41}{215} \leqslant t_2,\ 6 t_1 + t_2 \leqslant \frac{94}{43},\ \frac{82}{301} \leqslant t_1 \leqslant \frac{16}{43},\ t_1 + 8 t_2 \leqslant \frac{98}{43},\ \frac{123}{43} \leqslant 6 t_1 + 5 t_2 \right\}, \\
\nonumber U_{0104}(t_1, t_2) :=&\ \left\{ t_1 + t_2 \leqslant \frac{571}{817},\ \frac{82}{473} \leqslant t_2,\ 12 t_1 + t_2 \leqslant \frac{270}{43},\ \frac{574}{1247} \leqslant t_1,\ 19 t_2 - t_1 \leqslant \frac{161}{43},\ \frac{328}{43} \leqslant 12 t_1 + 11 t_2 \right\}, \\
\nonumber U_{0105}(t_1, t_2) :=&\ \left\{ 2 t_1 + t_2 \leqslant \frac{446}{387},\ 58 t_1 + 9 t_2 \leqslant \frac{1264}{43},\ \frac{164}{387} \leqslant t_1,\ t_1 + 5 t_2 \leqslant \frac{60}{43},\ \frac{615}{43} \leqslant 29 t_1 + 10 t_2 \right\}, \\
\nonumber U_{0106}(t_1, t_2) :=&\ \left\{ \frac{27}{43} \leqslant t_1 + t_2 \leqslant \frac{219}{301},\ \frac{41}{215} \leqslant t_2,\ 6 t_1 + t_2 \leqslant \frac{135}{43},\ \frac{205}{473} \leqslant t_1,\ 7 t_2 - t_1 \leqslant \frac{55}{43},\ \frac{164}{43} \leqslant 6 t_1 + 5 t_2 \right\}, \\
\nonumber U_{0107}(t_1, t_2) :=&\ \left\{ t_1 + t_2 \leqslant \frac{268}{473},\ 35 t_1 + 23 t_2 \leqslant \frac{767}{43},\ \frac{410}{473} \leqslant 2 t_1 + t_2,\ 2 t_1 + 13 t_2 \leqslant \frac{130}{43},\ \frac{1476}{43} \leqslant 70 t_1 + 59 t_2 \right\}, \\
\nonumber U_{0108}(t_1, t_2) :=&\ \left\{ t_1 + t_2 \leqslant \frac{313}{559},\ 41 t_1 + 27 t_2 \leqslant \frac{902}{43},\ \frac{492}{559} \leqslant 2 t_1 + t_2,\ 2 t_1 + 15 t_2 \leqslant \frac{138}{43},\ \frac{1722}{43} \leqslant 82 t_1 + 69 t_2 \right\}, \\
\nonumber U_{0109}(t_1, t_2) :=&\ \left\{ 2 t_1 + t_2 \leqslant \frac{536}{473},\ 70 t_1 + 11 t_2 \leqslant \frac{1534}{43},\ \frac{205}{473} \leqslant t_1,\ t_1 + 6 t_2 \leqslant \frac{65}{43},\ \frac{738}{43} \leqslant 35 t_1 + 12 t_2 \right\}, \\
\nonumber U_{0201}(t_1, t_2) :=&\ \left\{ \frac{27267}{66994} \leqslant t_1 + t_2 \leqslant \frac{231}{559},\ \frac{79}{817} \leqslant t_2 \leqslant -\frac{35}{23} (t_1 + t_2) + \frac{767}{989} \right\}, \\
\nonumber U_{0202}(t_1, t_2) :=&\ \left\{ \frac{231}{559} \leqslant t_1 + t_2 \leqslant \frac{3275}{7826},\ \frac{79}{817} \leqslant t_2 \leqslant - (t_1 + t_2) + \frac{313}{559} \right\}, \\
\nonumber U_{0203}(t_1, t_2) :=&\ \left\{ \frac{3275}{7826} \leqslant t_1 + t_2 \leqslant \frac{15005}{33497},\ \frac{79}{817} \leqslant t_2 \leqslant -\frac{41}{27} (t_1 + t_2) + \frac{902}{1161} \right\}, \\
\nonumber U_{0204}(t_1, t_2) :=&\ \left\{ \frac{13074}{28595} \leqslant t_1 + t_2 \leqslant \frac{20}{43},\ \frac{79}{817} \leqslant t_2 \leqslant -\frac{1}{5} (t_1 + t_2) + \frac{12}{43} \right\}, \\
\nonumber U_{0205}(t_1, t_2) :=&\ \left\{ \frac{20}{43} \leqslant t_1 + t_2 \leqslant \frac{41}{86},\ \frac{79}{817} \leqslant t_2 \leqslant \frac{1}{7} (t_1 + t_2) + \frac{55}{301} \right\}, \\
\nonumber U_{0206}(t_1, t_2) :=&\ \left\{ \frac{41}{86} \leqslant t_1 + t_2 \leqslant \frac{227}{473},\ \frac{79}{817} \leqslant t_2 \leqslant - (t_1 + t_2) + \frac{219}{301} \right\}, \\
\nonumber U_{0207}(t_1, t_2) :=&\ \left\{ \frac{227}{473} \leqslant t_1 + t_2 \leqslant \frac{2333}{4859},\ \frac{79}{817} \leqslant t_2 \leqslant -\frac{58}{9} (t_1 + t_2) + \frac{1264}{387} \right\}, \\
\nonumber U_{0208}(t_1, t_2) :=&\ \left\{ \frac{2333}{4859} \leqslant t_1 + t_2 \leqslant \frac{2501}{5203},\ \frac{79}{817} \leqslant t_2 \leqslant -\frac{1}{6} (t_1 + t_2) + \frac{65}{258} \right\}, \\
\nonumber U_{0209}(t_1, t_2) :=&\ \left\{ \frac{2501}{5203} \leqslant t_1 + t_2 \leqslant \frac{499}{1032},\ \frac{79}{817} \leqslant t_2 \leqslant - 2 (t_1 + t_2) + \frac{536}{473} \right\}, \\
\nonumber U_{0210}(t_1, t_2) :=&\ \left\{ \frac{499}{1032} \leqslant t_1 + t_2 \leqslant \frac{28277}{57190},\ \frac{79}{817} \leqslant t_2 \leqslant -\frac{70}{11} (t_1 + t_2) + \frac{1534}{473} \right\}, \\
\nonumber U_{0211}(t_1, t_2) :=&\ \left\{ (t_1 + t_2) + t_2 \leqslant \frac{673}{1247},\ \frac{82}{473} \leqslant t_2,\ 29 (t_1 + t_2) - t_2 \leqslant \frac{427}{43},\ \frac{246}{817} \leqslant \left(t_1 + \frac{79}{817}\right), \right. \\
\nonumber & \left. \quad 29 t_2 - \left(t_1 + \frac{79}{817}\right) \leqslant \frac{263}{43},\ \frac{246}{43} \leqslant 12 \left(t_1 + \frac{79}{817}\right) + 11 t_2 \right\}, \\
\nonumber U_{0212}(t_1, t_2) :=&\ \left\{ (t_1 + t_2) + t_2 \leqslant \frac{223}{387},\ 29 (t_1 + t_2) + 19 t_2 \leqslant \frac{632}{43},\ \frac{328}{387} \leqslant 2 \left(t_1 + \frac{79}{817}\right) + t_2, \right. \\
\nonumber & \left. \quad 2 (t_1 + t_2) + 11 t_2 \leqslant \frac{120}{43},\ \frac{1230}{43} \leqslant 58 \left(t_1 + \frac{79}{817}\right) + 49 t_2 \right\}, \\
\nonumber U_{0213}(t_1, t_2) :=&\ \left\{ (t_1 + t_2) + t_2 \leqslant \frac{268}{473},\ \frac{41}{215} \leqslant t_2,\ 6 (t_1 + t_2) + t_2 \leqslant \frac{94}{43},\ \frac{82}{301} \leqslant \left(t_1 + \frac{79}{817}\right), \right. \\
\nonumber & \left. \quad (t_1 + t_2) \leqslant \frac{16}{43},\ (t_1 + t_2) + 8 t_2 \leqslant \frac{98}{43},\ \frac{123}{43} \leqslant 6 \left(t_1 + \frac{79}{817}\right) + 5 t_2 \right\}, \\
\nonumber U_{0214}(t_1, t_2) :=&\ \left\{ (t_1 + t_2) + t_2 \leqslant \frac{571}{817},\ \frac{82}{473} \leqslant t_2,\ 12 (t_1 + t_2) + t_2 \leqslant \frac{270}{43},\ \frac{574}{1247} \leqslant \left(t_1 + \frac{79}{817}\right), \right. \\
\nonumber & \left. \quad 19 t_2 - \left(t_1 + \frac{79}{817}\right) \leqslant \frac{161}{43},\ \frac{328}{43} \leqslant 12 \left(t_1 + \frac{79}{817}\right) + 11 t_2 \right\}, \\
\nonumber U_{0215}(t_1, t_2) :=&\ \left\{ 2 (t_1 + t_2) + t_2 \leqslant \frac{446}{387},\ 58 (t_1 + t_2) + 9 t_2 \leqslant \frac{1264}{43},\ \frac{164}{387} \leqslant \left(t_1 + \frac{79}{817}\right), \right. \\
\nonumber & \left. \quad (t_1 + t_2) + 5 t_2 \leqslant \frac{60}{43},\ \frac{615}{43} \leqslant 29 \left(t_1 + \frac{79}{817}\right) + 10 t_2 \right\}, \\
\nonumber U_{0216}(t_1, t_2) :=&\ \left\{ \frac{27}{43} \leqslant \left(t_1 + \frac{79}{817}\right) +t_2,\ (t_1 + t_2) + t_2 \leqslant \frac{219}{301},\ \frac{41}{215} \leqslant t_2,\ 6 (t_1 + t_2) + t_2 \leqslant \frac{135}{43}, \right. \\
\nonumber & \left. \quad \frac{205}{473} \leqslant \left(t_1 + \frac{79}{817}\right),\ 7 t_2 - \left(t_1 + \frac{79}{817}\right) \leqslant \frac{55}{43},\ \frac{164}{43} \leqslant 6 \left(t_1 + \frac{79}{817}\right) + 5 t_2 \right\}, \\
\nonumber U_{0217}(t_1, t_2) :=&\ \left\{ (t_1 + t_2) + t_2 \leqslant \frac{268}{473},\ 35 (t_1 + t_2) + 23 t_2 \leqslant \frac{767}{43},\ \frac{410}{473} \leqslant 2 \left(t_1 + \frac{79}{817}\right) + t_2, \right. \\
\nonumber & \left. \quad 2 (t_1 + t_2) + 13 t_2 \leqslant \frac{130}{43},\ \frac{1476}{43} \leqslant 70 \left(t_1 + \frac{79}{817}\right) + 59 t_2 \right\}, \\
\nonumber U_{0218}(t_1, t_2) :=&\ \left\{ (t_1 + t_2) + t_2 \leqslant \frac{313}{559},\ 41 (t_1 + t_2) + 27 t_2 \leqslant \frac{902}{43},\ \frac{492}{559} \leqslant 2 \left(t_1 + \frac{79}{817}\right) + t_2, \right. \\
\nonumber & \left. \quad 2 (t_1 + t_2) + 15 t_2 \leqslant \frac{138}{43},\ \frac{1722}{43} \leqslant 82 \left(t_1 + \frac{79}{817}\right) + 69 t_2 \right\}, \\
\nonumber U_{0219}(t_1, t_2) :=&\ \left\{ 2 (t_1 + t_2) + t_2 \leqslant \frac{536}{473},\ 70 (t_1 + t_2) + 11 t_2 \leqslant \frac{1534}{43},\ \frac{205}{473} \leqslant \left(t_1 + \frac{79}{817}\right), \right. \\
\nonumber & \left. \quad (t_1 + t_2) + 6 t_2 \leqslant \frac{65}{43},\ \frac{738}{43} \leqslant 35 \left(t_1 + \frac{79}{817}\right) + 12 t_2 \right\}, \\
\nonumber U_{0220}(t_1, t_2) :=&\ \left\{ (t_2 + t_2) + t_1 \leqslant \frac{673}{1247},\ \frac{82}{473} \leqslant t_1,\ 29 (t_2 + t_2) - t_1 \leqslant \frac{427}{43},\ \frac{246}{817} \leqslant \left(t_2 + \frac{79}{817}\right), \right. \\
\nonumber & \left. \quad 29 t_1 - \left(t_2 + \frac{79}{817}\right) \leqslant \frac{263}{43},\ \frac{246}{43} \leqslant 12 \left(t_2 + \frac{79}{817}\right) + 11 t_1 \right\}, \\
\nonumber U_{0221}(t_1, t_2) :=&\ \left\{ (t_2 + t_2) + t_1 \leqslant \frac{223}{387},\ 29 (t_2 + t_2) + 19 t_1 \leqslant \frac{632}{43},\ \frac{328}{387} \leqslant 2 \left(t_2 + \frac{79}{817}\right) + t_1, \right. \\
\nonumber & \left. \quad 2 (t_2 + t_2) + 11 t_1 \leqslant \frac{120}{43},\ \frac{1230}{43} \leqslant 58 \left(t_2 + \frac{79}{817}\right) + 49 t_1 \right\}, \\
\nonumber U_{0222}(t_1, t_2) :=&\ \left\{ (t_2 + t_2) + t_1 \leqslant \frac{268}{473},\ \frac{41}{215} \leqslant t_1,\ 6 (t_2 + t_2) + t_1 \leqslant \frac{94}{43},\ \frac{82}{301} \leqslant \left(t_2 + \frac{79}{817}\right), \right. \\
\nonumber & \left. \quad (t_2 + t_2) \leqslant \frac{16}{43},\ (t_2 + t_2) + 8 t_1 \leqslant \frac{98}{43},\ \frac{123}{43} \leqslant 6 \left(t_2 + \frac{79}{817}\right) + 5 t_1 \right\}, \\
\nonumber U_{0223}(t_1, t_2) :=&\ \left\{ (t_2 + t_2) + t_1 \leqslant \frac{571}{817},\ \frac{82}{473} \leqslant t_1,\ 12 (t_2 + t_2) + t_1 \leqslant \frac{270}{43},\ \frac{574}{1247} \leqslant \left(t_2 + \frac{79}{817}\right), \right. \\
\nonumber & \left. \quad 19 t_1 - \left(t_2 + \frac{79}{817}\right) \leqslant \frac{161}{43},\ \frac{328}{43} \leqslant 12 \left(t_2 + \frac{79}{817}\right) + 11 t_1 \right\}, \\
\nonumber U_{0224}(t_1, t_2) :=&\ \left\{ 2 (t_2 + t_2) + t_1 \leqslant \frac{446}{387},\ 58 (t_2 + t_2) + 9 t_1 \leqslant \frac{1264}{43},\ \frac{164}{387} \leqslant \left(t_2 + \frac{79}{817}\right), \right. \\
\nonumber & \left. \quad (t_2 + t_2) + 5 t_1 \leqslant \frac{60}{43},\ \frac{615}{43} \leqslant 29 \left(t_2 + \frac{79}{817}\right) + 10 t_1 \right\}, \\
\nonumber U_{0225}(t_1, t_2) :=&\ \left\{ \frac{27}{43} \leqslant \left(t_2 + \frac{79}{817}\right) + t_1,\ (t_2 + t_2) + t_1 \leqslant \frac{219}{301},\ \frac{41}{215} \leqslant t_1,\ 6 (t_2 + t_2) + t_1 \leqslant \frac{135}{43}, \right. \\
\nonumber & \left. \quad \frac{205}{473} \leqslant \left(t_2 + \frac{79}{817}\right),\ 7 t_1 - \left(t_2 + \frac{79}{817}\right) \leqslant \frac{55}{43},\ \frac{164}{43} \leqslant 6 \left(t_2 + \frac{79}{817}\right) + 5 t_1 \right\}, \\
\nonumber U_{0226}(t_1, t_2) :=&\ \left\{ (t_2 + t_2) + t_1 \leqslant \frac{268}{473},\ 35 (t_2 + t_2) + 23 t_1 \leqslant \frac{767}{43},\ \frac{410}{473} \leqslant 2 \left(t_2 + \frac{79}{817}\right) + t_1, \right. \\
\nonumber & \left. \quad  2 (t_2 + t_2) + 13 t_1 \leqslant \frac{130}{43},\ \frac{1476}{43} \leqslant 70 \left(t_2 + \frac{79}{817}\right) + 59 t_1 \right\}, \\
\nonumber U_{0227}(t_1, t_2) :=&\ \left\{ (t_2 + t_2) + t_1 \leqslant \frac{313}{559},\ 41 (t_2 + t_2) + 27 t_1 \leqslant \frac{902}{43},\ \frac{492}{559} \leqslant 2 \left(t_2 + \frac{79}{817}\right) + t_1, \right. \\
\nonumber & \left. \quad 2 (t_2 + t_2) + 15 t_1 \leqslant \frac{138}{43},\ \frac{1722}{43} \leqslant 82 \left(t_2 + \frac{79}{817}\right) + 69 t_1 \right\}, \\
\nonumber U_{0228}(t_1, t_2) :=&\ \left\{ 2 (t_2 + t_2) + t_1 \leqslant \frac{536}{473},\ 70 (t_2 + t_2) + 11 t_1 \leqslant \frac{1534}{43},\ \frac{205}{473} \leqslant \left(t_2 + \frac{79}{817}\right), \right. \\
\nonumber & \left. \quad (t_2 + t_2) + 6 t_1 \leqslant \frac{65}{43},\ \frac{738}{43} \leqslant 35 \left(t_2 + \frac{79}{817}\right) + 12 t_1 \right\}, \\
\nonumber U_{0229}(t_1, t_2) :=&\ \left\{ t_1 + (t_2 + t_2) \leqslant \frac{673}{1247},\ \frac{82}{473} \leqslant \left(t_2 + \frac{79}{817}\right),\ 29 t_1 - \left(t_2 + \frac{79}{817}\right) \leqslant \frac{427}{43},\ \frac{246}{817} \leqslant t_1, \right. \\
\nonumber & \left. \quad 29 (t_2 + t_2) - t_1 \leqslant \frac{263}{43},\ \frac{246}{43} \leqslant 12 t_1 + 11 \left(t_2 + \frac{79}{817}\right) \right\}, \\
\nonumber U_{0230}(t_1, t_2) :=&\ \left\{ t_1 + (t_2 + t_2) \leqslant \frac{223}{387},\ 29 t_1 + 19 (t_2 + t_2) \leqslant \frac{632}{43},\ \frac{328}{387} \leqslant 2 t_1 + \left(t_2 + \frac{79}{817}\right), \right. \\
\nonumber & \left. \quad 2 t_1 + 11 (t_2 + t_2) \leqslant \frac{120}{43},\ \frac{1230}{43} \leqslant 58 t_1 + 49 \left(t_2 + \frac{79}{817}\right) \right\}, \\
\nonumber U_{0231}(t_1, t_2) :=&\ \left\{ t_1 + (t_2 + t_2) \leqslant \frac{268}{473},\ \frac{41}{215} \leqslant \left(t_2 + \frac{79}{817}\right),\ 6 t_1 + (t_2 + t_2) \leqslant \frac{94}{43},\ \frac{82}{301} \leqslant t_1 \leqslant \frac{16}{43}, \right. \\
\nonumber & \left. \quad t_1 + 8 (t_2 + t_2) \leqslant \frac{98}{43},\ \frac{123}{43} \leqslant 6 t_1 + 5 \left(t_2 + \frac{79}{817}\right) \right\}, \\
\nonumber U_{0232}(t_1, t_2) :=&\ \left\{ t_1 + (t_2 + t_2) \leqslant \frac{571}{817},\ \frac{82}{473} \leqslant \left(t_2 + \frac{79}{817}\right),\ 12 t_1 + (t_2 + t_2) \leqslant \frac{270}{43},\ \frac{574}{1247} \leqslant t_1, \right. \\
\nonumber & \left. \quad 19 (t_2 + t_2) - t_1 \leqslant \frac{161}{43},\ \frac{328}{43} \leqslant 12 t_1 + 11 \left(t_2 + \frac{79}{817}\right) \right\}, \\
\nonumber U_{0233}(t_1, t_2) :=&\ \left\{ 2 t_1 + (t_2 + t_2) \leqslant \frac{446}{387},\ 58 t_1 + 9 (t_2 + t_2) \leqslant \frac{1264}{43},\ \frac{164}{387} \leqslant t_1, \right. \\
\nonumber & \left. \quad t_1 + 5 (t_2 + t_2) \leqslant \frac{60}{43},\ \frac{615}{43} \leqslant 29 t_1 + 10 \left(t_2 + \frac{79}{817}\right) \right\}, \\
\nonumber U_{0234}(t_1, t_2) :=&\ \left\{ \frac{27}{43} \leqslant t_1 + \left(t_2 + \frac{79}{817}\right),\ t_1 + (t_2 + t_2) \leqslant \frac{219}{301},\ \frac{41}{215} \leqslant \left(t_2 + \frac{79}{817}\right),\ 6 t_1 + (t_2 + t_2) \leqslant \frac{135}{43}, \right. \\
\nonumber & \left. \quad \frac{205}{473} \leqslant t_1,\ 7 (t_2 + t_2) - t_1 \leqslant \frac{55}{43},\ \frac{164}{43} \leqslant 6 t_1 + 5 \left(t_2 + \frac{79}{817}\right) \right\}, \\
\nonumber U_{0235}(t_1, t_2) :=&\ \left\{ t_1 + (t_2 + t_2) \leqslant \frac{268}{473},\ 35 t_1 + 23 (t_2 + t_2) \leqslant \frac{767}{43},\ \frac{410}{473} \leqslant 2 t_1 + \left(t_2 + \frac{79}{817}\right), \right. \\
\nonumber & \left. \quad  2 t_1 + 13 (t_2 + t_2) \leqslant \frac{130}{43},\ \frac{1476}{43} \leqslant 70 t_1 + 59 \left(t_2 + \frac{79}{817}\right) \right\}, \\
\nonumber U_{0236}(t_1, t_2) :=&\ \left\{ t_1 + (t_2 + t_2) \leqslant \frac{313}{559},\ 41 t_1 + 27 (t_2 + t_2) \leqslant \frac{902}{43},\ \frac{492}{559} \leqslant 2 t_1 + \left(t_2 + \frac{79}{817}\right), \right. \\
\nonumber & \left. \quad 2 t_1 + 15 (t_2 + t_2) \leqslant \frac{138}{43},\ \frac{1722}{43} \leqslant 82 t_1 + 69 \left(t_2 + \frac{79}{817}\right) \right\}, \\
\nonumber U_{0237}(t_1, t_2) :=&\ \left\{ 2 t_1 + (t_2 + t_2) \leqslant \frac{536}{473},\ 70 t_1 + 11 (t_2 + t_2) \leqslant \frac{1534}{43},\ \frac{205}{473} \leqslant t_1, \right. \\
\nonumber & \left. \quad t_1 + 6 (t_2 + t_2) \leqslant \frac{65}{43},\ \frac{738}{43} \leqslant 35 t_1 + 12 \left(t_2 + \frac{79}{817}\right) \right\}, \\
\nonumber U_{0301}(t_1, t_2) :=&\ \left\{ 2 (t_1 + t_2) + t_2 \leqslant 2 - \frac{20.5}{21.5},\ 4 (t_1 + t_2) + 6 t_2 \leqslant 4 - \frac{20.5}{21.5},\  4 t_2 \leqslant 4 - \frac{61.5}{21.5} \right\}, \\
\nonumber U_{0302}(t_1, t_2) :=&\ \left\{ (t_1 + t_2) \leqslant \frac{45}{86},\ t_2 \leqslant \frac{41}{344} \right\}, \\
\nonumber U_{0303}(t_1, t_2) :=&\ \left\{ \frac{55}{129} \leqslant (t_1 + t_2) \leqslant \frac{20}{43},\ t_2 \leqslant -\frac{1}{5}(t_1 + t_2) + \frac{12}{43} \right\}, \\
\nonumber U_{0304}(t_1, t_2) :=&\ \left\{ \frac{20}{43} \leqslant (t_1 + t_2) \leqslant \frac{41}{86},\ t_2 \leqslant \frac{1}{7}(t_1 + t_2) + \frac{55}{301} \right\}, \\
\nonumber U_{0305}(t_1, t_2) :=&\ \left\{ \frac{41}{86} \leqslant (t_1 + t_2) \leqslant \frac{227}{473},\ t_2 \leqslant -(t_1 + t_2) + \frac{219}{301} \right\}, \\
\nonumber U_{0306}(t_1, t_2) :=&\ \left\{ \frac{227}{473} \leqslant (t_1 + t_2) \leqslant \frac{2333}{4859},\ t_2 \leqslant -\frac{58}{9}(t_1 + t_2) + \frac{1264}{387} \right\}, \\
\nonumber U_{0307}(t_1, t_2) :=&\ \left\{ \frac{2333}{4859} \leqslant (t_1 + t_2) \leqslant \frac{2501}{5203},\ t_2 \leqslant -\frac{1}{6}(t_1 + t_2) + \frac{65}{258} \right\}, \\
\nonumber U_{0308}(t_1, t_2) :=&\ \left\{ \frac{2501}{5203} \leqslant (t_1 + t_2) \leqslant \frac{499}{1032},\ t_2 \leqslant -2(t_1 + t_2) + \frac{536}{473} \right\}, \\
\nonumber U_{0309}(t_1, t_2) :=&\ \left\{ \frac{499}{1032} \leqslant (t_1 + t_2) \leqslant \frac{28277}{57190},\ t_2 \leqslant -\frac{70}{11}(t_1 + t_2) + \frac{1534}{473} \right\}, \\
\nonumber U_{0310}(t_1, t_2) :=&\ \left\{ 2 (t_2 + t_2) + t_1 \leqslant 2 - \frac{20.5}{21.5},\ 4 (t_2 + t_2) + 6 t_1 \leqslant 4 - \frac{20.5}{21.5},\ 4 t_1 \leqslant 4 - \frac{61.5}{21.5}, \right\}, \\
\nonumber U_{0311}(t_1, t_2) :=&\ \left\{ (t_2 + t_2) \leqslant \frac{45}{86},\ t_1 \leqslant \frac{41}{344} \right\}, \\
\nonumber U_{0312}(t_1, t_2) :=&\ \left\{ \frac{55}{129} \leqslant \left(t_2 + \frac{79}{817}\right),\ (t_2 + t_2) \leqslant \frac{20}{43},\ t_1 \leqslant -\frac{1}{5}(t_2 + t_2) + \frac{12}{43} \right\}, \\
\nonumber U_{0313}(t_1, t_2) :=&\ \left\{ \frac{20}{43} \leqslant \left(t_2 + \frac{79}{817}\right),\ (t_2 + t_2) \leqslant \frac{41}{86},\ t_1 \leqslant \frac{1}{7}\left(t_2 + \frac{79}{817}\right) + \frac{55}{301} \right\}, \\
\nonumber U_{0314}(t_1, t_2) :=&\ \left\{ \frac{41}{86} \leqslant \left(t_2 + \frac{79}{817}\right),\ (t_2 + t_2) \leqslant \frac{227}{473},\ t_1 \leqslant -(t_2 + t_2) + \frac{219}{301} \right\}, \\
\nonumber U_{0315}(t_1, t_2) :=&\ \left\{ \frac{227}{473} \leqslant \left(t_2 + \frac{79}{817}\right),\ (t_2 + t_2) \leqslant \frac{2333}{4859},\ t_1 \leqslant -\frac{58}{9}(t_2 + t_2) + \frac{1264}{387} \right\}, \\
\nonumber U_{0316}(t_1, t_2) :=&\ \left\{ \frac{2333}{4859} \leqslant \left(t_2 + \frac{79}{817}\right),\ (t_2 + t_2) \leqslant \frac{2501}{5203},\ t_1 \leqslant -\frac{1}{6}(t_2 + t_2) + \frac{65}{258} \right\}, \\
\nonumber U_{0317}(t_1, t_2) :=&\ \left\{ \frac{2501}{5203} \leqslant \left(t_2 + \frac{79}{817}\right),\ (t_2 + t_2) \leqslant \frac{499}{1032},\ t_1 \leqslant -2(t_2 + t_2) + \frac{536}{473} \right\}, \\
\nonumber U_{0318}(t_1, t_2) :=&\ \left\{ \frac{499}{1032} \leqslant \left(t_2 + \frac{79}{817}\right),\ (t_2 + t_2) \leqslant \frac{28277}{57190},\ t_1 \leqslant -\frac{70}{11}(t_2 + t_2) + \frac{1534}{473} \right\},
\end{align}
and let $U_{0} = U_{01} \cup U_{02} \cup U_{03}$. Clearly $U_{01}$ corresponds to Lemma~\ref{l32}, $U_{02}$ corresponds to Lemmas~\ref{l33}--\ref{l36}, and $U_{03}$ corresponds to Lemmas~\ref{l43}--\ref{l44}. Then we have
\begin{align}
\nonumber S_3 =&\ \sum_{(t_1, t_2) \in U_{01}}S\left(\mathcal{A}_{p_1 p_2}, p_2\right) + \sum_{(t_1, t_2) \in U_{02}}S\left(\mathcal{A}_{p_1 p_2}, p_2\right) \\
\nonumber &+ \sum_{(t_1, t_2) \in U_{03}}S\left(\mathcal{A}_{p_1 p_2}, p_2\right) + \sum_{(t_1, t_2) \notin U_{0}}S\left(\mathcal{A}_{p_1 p_2}, p_2\right) \\
=&\ S_{301}+S_{302}+\Sigma_{303}+\Sigma_{0}.
\end{align}
Note that $U_{01}$ corresponds to Lemma~\ref{l32}, $U_{02}$ corresponds to Lemmas~\ref{l33}--\ref{l36}, and $U_{03}$ corresponds to Lemmas~\ref{l43}--\ref{l44}.

For $S_{301}$, by Lemma~\ref{l32} and Lemma~\ref{l62}, we can give an asymptotic formula. For $S_{302}$, we can apply Buchstab's identity again to get
\begin{align}
\nonumber S_{302} =&\ \sum_{(t_1, t_2) \in U_{02} }S\left(\mathcal{A}_{p_1 p_2}, p_2\right) \\
=&\ \sum_{(t_1, t_2) \in U_{02} }S\left(\mathcal{A}_{p_1 p_2}, X^{\frac{79}{817}}\right) - \sum_{\substack{(t_1, t_2) \in U_{02} \\ \frac{79}{817} \leqslant t_3 < \min\left(t_2, \frac{1}{2}(1-t_1-t_2)\right) }}S\left(\mathcal{A}_{p_1 p_2 p_3}, p_3\right).
\end{align}
By Lemma~\ref{TypeI} we have
\begin{equation}
\sum_{(t_1, t_2) \in U_{02} }S\left(\mathcal{A}_{p_1 p_2}, X^{\frac{79}{817}}\right) = \eta \sum_{(t_1, t_2) \in U_{02} }S\left(\mathcal{B}_{p_1 p_2}, X^{\frac{79}{817}}\right) + O\left(\frac{\varepsilon \eta x}{\log x}\right).
\end{equation}
By Lemmas~\ref{l33}--\ref{l36} and Lemma~\ref{l63} we have
\begin{equation}
\sum_{\substack{(t_1, t_2) \in U_{02} \\ \frac{79}{817} \leqslant t_3 < \min\left(t_2, \frac{1}{2}(1-t_1-t_2)\right) }}S\left(\mathcal{A}_{p_1 p_2 p_3}, p_3\right) = \eta \sum_{\substack{(t_1, t_2) \in U_{02} \\ \frac{79}{817} \leqslant t_3 < \min\left(t_2, \frac{1}{2}(1-t_1-t_2)\right) }}S\left(\mathcal{B}_{p_1 p_2 p_3}, p_3\right) + O\left(\frac{\varepsilon \eta x}{\log x}\right).
\end{equation}
Combining (8)--(10), we have
\begin{equation}
S_{302} = \sum_{(t_1, t_2) \in U_{02} }S\left(\mathcal{A}_{p_1 p_2}, p_2\right) = \eta \sum_{(t_1, t_2) \in U_{02} }S\left(\mathcal{B}_{p_1 p_2}, p_2\right) + O\left(\frac{\varepsilon \eta x}{\log x}\right).
\end{equation}

Before decomposing $\Sigma_{303}$, we first make a definition: we say a set of variables $(t_1, \ldots, t_n)$ is \textit{good} if the variables can be partitioned into $(t_i, h) \in U_{01}$ or $(h, t_i) \in U_{01}$. Now we can apply Buchstab's identity twice more to get
\begin{align}
\nonumber \Sigma_{303} =&\ \sum_{(t_1, t_2) \in U_{03}}S\left(\mathcal{A}_{p_1 p_2}, p_2\right) \\
\nonumber =&\ \sum_{(t_1, t_2) \in U_{03}}S\left(\mathcal{A}_{p_1 p_2}, X^{\frac{79}{817}} \right) - \sum_{\substack{(t_1, t_2) \in U_{03} \\ \frac{79}{817} \leqslant t_3 < \min\left(t_2, \frac{1}{2}(1-t_1-t_2)\right) \\ (t_1, t_2, t_3) \text{ is good} }}S\left(\mathcal{A}_{p_1 p_2 p_3}, p_3\right) \\
\nonumber & - \sum_{\substack{(t_1, t_2) \in U_{03} \\ \frac{79}{817} \leqslant t_3 < \min\left(t_2, \frac{1}{2}(1-t_1-t_2)\right) \\ (t_1, t_2, t_3) \text{ is not good} }}S\left(\mathcal{A}_{p_1 p_2 p_3}, X^{\frac{79}{817}} \right) + \sum_{\substack{(t_1, t_2) \in U_{03} \\ \frac{79}{817} \leqslant t_3 < \min\left(t_2, \frac{1}{2}(1-t_1-t_2)\right) \\ (t_1, t_2, t_3) \text{ is not good} \\ \frac{79}{817} \leqslant t_4 < \min\left(t_3, \frac{1}{2}(1-t_1-t_2-t_3)\right) \\ (t_1, t_2, t_3, t_4) \text{ is good} }}S\left(\mathcal{A}_{p_1 p_2 p_3 p_4}, p_4 \right) \\
\nonumber & + \sum_{\substack{(t_1, t_2) \in U_{03} \\ \frac{79}{817} \leqslant t_3 < \min\left(t_2, \frac{1}{2}(1-t_1-t_2)\right) \\ (t_1, t_2, t_3) \text{ is not good} \\ \frac{79}{817} \leqslant t_4 < \min\left(t_3, \frac{1}{2}(1-t_1-t_2-t_3)\right) \\ (t_1, t_2, t_3, t_4) \text{ is not good} }}S\left(\mathcal{A}_{p_1 p_2 p_3 p_4}, p_4 \right) \\
=&\ S_{3031} - S_{3032} - S_{3033} + S_{3034} + S_{3035}.
\end{align}
By Lemma~\ref{TypeI} we have
\begin{equation}
S_{3031} = \sum_{(t_1, t_2) \in U_{03}}S\left(\mathcal{A}_{p_1 p_2}, p_2\right) = \eta \sum_{(t_1, t_2) \in U_{03}}S\left(\mathcal{B}_{p_1 p_2}, p_2\right) + O\left(\frac{\varepsilon \eta x}{\log x}\right)
\end{equation}
and
\begin{equation}
S_{3033} = \sum_{\substack{(t_1, t_2) \in U_{03} \\ \frac{79}{817} \leqslant t_3 < \min\left(t_2, \frac{1}{2}(1-t_1-t_2)\right) \\ (t_1, t_2, t_3) \text{ is not good} }}S\left(\mathcal{A}_{p_1 p_2 p_3}, X^{\frac{79}{817}} \right) = \eta \sum_{\substack{(t_1, t_2) \in U_{03} \\ \frac{79}{817} \leqslant t_3 < \min\left(t_2, \frac{1}{2}(1-t_1-t_2)\right) \\ (t_1, t_2, t_3) \text{ is not good} }}S\left(\mathcal{B}_{p_1 p_2 p_3}, X^{\frac{79}{817}} \right) + O\left(\frac{\varepsilon \eta x}{\log x}\right).
\end{equation}
By Lemma~\ref{l32} and a variant of Lemma~\ref{l63} we have
\begin{equation}
S_{3032} = \sum_{\substack{(t_1, t_2) \in U_{03} \\ \frac{79}{817} \leqslant t_3 < \min\left(t_2, \frac{1}{2}(1-t_1-t_2)\right) \\ (t_1, t_2, t_3) \text{ is good} }}S\left(\mathcal{A}_{p_1 p_2 p_3}, p_3\right) = \eta \sum_{\substack{(t_1, t_2) \in U_{03} \\ \frac{79}{817} \leqslant t_3 < \min\left(t_2, \frac{1}{2}(1-t_1-t_2)\right) \\ (t_1, t_2, t_3) \text{ is good} }}S\left(\mathcal{B}_{p_1 p_2 p_3}, p_3\right) + O\left(\frac{\varepsilon \eta x}{\log x}\right)
\end{equation}
and
\begin{align}
\nonumber S_{3034} =&\ \sum_{\substack{(t_1, t_2) \in U_{03} \\ \frac{79}{817} \leqslant t_3 < \min\left(t_2, \frac{1}{2}(1-t_1-t_2)\right) \\ (t_1, t_2, t_3) \text{ is not good} \\ \frac{79}{817} \leqslant t_4 < \min\left(t_3, \frac{1}{2}(1-t_1-t_2-t_3)\right) \\ (t_1, t_2, t_3, t_4) \text{ is good} }}S\left(\mathcal{A}_{p_1 p_2 p_3 p_4}, p_4 \right) \\
=&\ \eta \sum_{\substack{(t_1, t_2) \in U_{03} \\ \frac{79}{817} \leqslant t_3 < \min\left(t_2, \frac{1}{2}(1-t_1-t_2)\right) \\ (t_1, t_2, t_3) \text{ is not good} \\ \frac{79}{817} \leqslant t_4 < \min\left(t_3, \frac{1}{2}(1-t_1-t_2-t_3)\right) \\ (t_1, t_2, t_3, t_4) \text{ is good} }}S\left(\mathcal{B}_{p_1 p_2 p_3 p_4}, p_4 \right) + O\left(\frac{\varepsilon \eta x}{\log x}\right).
\end{align}
For the remaining $S_{3035}$, we can simply discard the whole sum, leading to a loss of size
\begin{equation}
\int_{(t_1, t_2, t_3, t_4) \in V_1} \frac{\omega_{1}\left(\frac{1 - t_1 - t_2 - t_3 - t_4}{t_4}\right)}{t_1 t_2 t_3 t_4^2} d t_4 d t_3 d t_2 d t_1 < 0.03,
\end{equation}
where
\begin{align}
\nonumber V_1(t_1, t_2, t_3, t_4) :=&\ \left\{ (t_1, t_2) \in U_{03},\ \frac{79}{817} \leqslant t_3 < \min\left(t_2, \frac{1}{2}(1-t_1-t_2)\right), \right. \\
\nonumber & \quad (t_1, t_2, t_3) \text{ is not good}, \\
\nonumber & \quad \frac{79}{817} \leqslant t_4 < \min\left(t_3, \frac{1}{2}(1-t_1-t_2-t_3) \right), \\
\nonumber & \quad (t_1, t_2, t_3, t_4) \text{ is not good}, \\
\nonumber & \left. \quad \frac{79}{817} \leqslant t_1 < \frac{1}{2},\ \frac{79}{817} \leqslant t_2 < \min\left(t_1, \frac{1}{2}(1-t_1) \right) \right\}.
\end{align}
We note that further decompositions on $S_{3035}$ are possible, but we do not consider them here.

For the remaining $\Sigma_{0}$, we cannot give asymptotic formulas using Lemmas~\ref{l61}--\ref{l63}. Hence we must discard this sum, leading to a much bigger two-dimensional loss of
\begin{equation}
\int_{\frac{79}{817}}^{\frac{1}{2}} \int_{\frac{79}{817}}^{\min\left(t_1, \frac{1-t_1}{2}\right)} \mathbbm{1}_{(t_1, t_2) \notin U_{0}} \frac{\omega \left(\frac{1 - t_1 - t_2}{t_2}\right)}{t_1 t_2^2} d t_2 d t_1 < 0.992,
\end{equation}
However, we can use Buchstab's identity in reverse to get asymptotic formulas for some almost-primes counted. When $t_2 < \frac{1}{2}(1-t_1-t_2)$, we have
\begin{equation}
\sum_{\substack{(t_1, t_2) \notin U_{0} \\ t_2 < \frac{1}{2}(1-t_1-t_2)}}S\left(\mathcal{A}_{p_1 p_2}, p_2\right) = \sum_{\substack{(t_1, t_2) \notin U_{0} \\ t_2 < \frac{1}{2}(1-t_1-t_2)}}S\left(\mathcal{A}_{p_1 p_2}, \left(\frac{2X}{p_1 p_2}\right)^{\frac{1}{2}} \right) + \sum_{\substack{(t_1, t_2) \notin U_{0} \\ t_2 < t_3 < \frac{1}{2}(1-t_1-t_2)}}S\left(\mathcal{A}_{p_1 p_2 p_3}, p_3\right).
\end{equation}
This can be seen as a "reversed" application of Buchstab's identity. The first term on the right-hand side counts primes, while the second term counts almost-primes. We cannot give an asymptotic formula for the first term on the right-hand side, but we can give asymptotic formulas for part of the second term if $(t_1, t_2, t_3)$ is good. We can then subtract the size of this part from the loss of $\Sigma_{0}$. The savings from this part are larger than
\begin{equation}
\int_{(t_1, t_2, t_3) \in V_3} \frac{\omega \left(\frac{1 - t_1 - t_2 - t_3}{t_3}\right)}{t_1 t_2 t_3^2} d t_3 d t_2 d t_1 > 0.027,
\end{equation}
where
\begin{align}
\nonumber V_3(t_1, t_2, t_3) :=&\ \left\{ (t_1, t_2) \notin U_0,\ t_2 < t_3 < \frac{1}{2}(1-t_1-t_2),\ (t_1, t_2, t_3) \text{ is good}, \right. \\
\nonumber & \left. \quad \frac{79}{817} \leqslant t_1 < \frac{1}{2},\ \frac{79}{817} \leqslant t_2 < \min\left(t_1, \frac{1}{2}(1-t_1) \right) \right\}.
\end{align}

Finally, by (1)--(20) we have
$$
\pi(x+\eta x)-\pi(x) = S\left(\mathcal{A},(2 X)^{\frac{1}{2}}\right) \geqslant (1 - 0.03 - 0.992 + 0.027) \frac{\eta x}{\log x} = 0.005 \frac{\eta x}{\log x},
$$
and the proof of Theorem~\ref{t1} is completed. The exponent $\frac{1}{21.5}$ is rather near to the limit obtained by the method employed in this paper, and something like $\frac{1}{22}$ is far out of reach. With the exponent $\frac{1}{22}$, the above method only gives a lower bound constant less than $-0.1$, which is even worse than the trivial lower bound. We remark that the above method gives a lower bound constant $>0.04$ when the exponent is $\frac{1}{21}$, which is useful for an application in the next section.

\section{Applications of Theorem 1.1}
Clearly our Theorem~\ref{t1} has many interesting applications. The following application of our Theorem~\ref{t1} is about Goldbach numbers (sum of two primes) in short intervals. By combining our Theorem~\ref{t1} with the main theorem proved in \cite{BHP}, we can easily deduce the following theorem.
\begin{theorem}\label{21860}
The interval $[X, X+X^{\frac{21}{860}}]$ contains Goldbach numbers. That is,
$$
g_{n+1} - g_{n} \ll g_{n}^{21/860},
$$
where $g_{n}$ denote the $n$-th Goldbach number.
\end{theorem}
Previous exponent $\frac{21}{800}$ (see either \cite{Pintz2012} or \cite{PintzGoldbachSejtesrol}) comes from Jia's exponent $\frac{1}{20}$ \cite{Jia120}. We remark that if we focus on Maillet numbers (difference of two primes) instead of Goldbach numbers in short intervals, Pintz \cite{PintzM} improved this exponent to any $\varepsilon > 0$.

Another application of our Theorem~\ref{t1} is about prime values of the integer parts of real sequences, improving the previous result of Harman \cite{Harman2001} by adding one more term on both of the sequences $[p^k \alpha]$ and $[(p \alpha)^k]$.
\begin{theorem}\label{t2}
For almost all $\alpha>0$, both of the following statements are true:

(1) Infinitely often $p, [p \alpha], [p^2 \alpha], \ldots, [p^{21} \alpha]$ are all prime.

(2) Infinitely often $p, [p \alpha], [(p \alpha)^2], \ldots, [(p \alpha)^{21}]$ are all prime.
\end{theorem}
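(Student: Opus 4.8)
The plan is to deduce Theorem~\ref{t2} from Theorem~\ref{t1} by the metric method of Harman (see \cite{Harman2001} and \cite[Chapter~9]{HarmanBOOK}), which transfers a ``primes in almost all short intervals'' result into an ``almost all $\alpha$'' statement of this kind. We describe part~(1) in detail; part~(2) is handled in the same way, and we indicate the (purely notational) changes at the end. By scaling and countable additivity it suffices to take $\alpha\in[1,2]$, and we put
\[
\mathcal{N}(X,\alpha)=\#\bigl\{\,p\le X\ \text{prime}:\ [p^{j}\alpha]\ \text{is prime for}\ 1\le j\le 21\,\bigr\},
\]
the goal being $\mathcal{N}(X,\alpha)\to\infty$ for almost all $\alpha$. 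Since $\mathcal{N}(X,\alpha)$ is non-decreasing in $X$ it is enough to treat $X=2^{k}$, and for this we establish a first moment estimate $\int_{1}^{2}\mathcal{N}(X,\alpha)\,d\alpha\asymp X(\log X)^{-22}$ (the lower bound being the substantial half) together with a matching second moment $\int_{1}^{2}\mathcal{N}(X,\alpha)^{2}\,d\alpha=\bigl(\int_{1}^{2}\mathcal{N}(X,\alpha)\,d\alpha\bigr)^{2}+O\bigl(X(\log X)^{-A}\bigr)$ for a suitable fixed $A>0$; Chebyshev's inequality then shows that the set of $\alpha\in[1,2]$ with $\mathcal{N}(2^{k},\alpha)$ much smaller than its mean has measure $\ll k^{O(1)}2^{-k}$, which is summable in $k$, so by the first Borel--Cantelli lemma and the monotonicity in $X$ we get $\mathcal{N}(X,\alpha)\to\infty$ for almost all $\alpha$.

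For the first moment one unfolds the condition on $\alpha$: if $[p^{j}\alpha]=q_{j}$ for $1\le j\le 21$ then $q_{1}\in[p,2p)$ and $q_{j+1}\in[pq_{j},pq_{j}+p)$ for $1\le j\le 20$, and conversely each such \emph{prime chain} $(q_{1},\dots,q_{21})$ (all $q_{j}$ prime) is realised by an interval of $\alpha$ of length exactly $p^{-21}$. Writing $E_{p}\subseteq[1,2]$ for the set of $\alpha$ counted by $\mathcal{N}$, this gives $|E_{p}|=p^{-21}C(p)$, where $C(p)$ is the number of prime chains rooted at $p$, and $\int_{1}^{2}\mathcal{N}(X,\alpha)\,d\alpha=\sum_{p\le X}p^{-21}C(p)$. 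The heart of the matter is the lower bound $C(p)\gg p^{21}(\log p)^{-21}$ after a mild average over $p$, and this is where Theorem~\ref{t1} — or, more precisely, the sieve and Dirichlet-polynomial machinery behind it — enters: a prime chain is built up one level at a time, and at level $j$ one must count primes in the interval $[pq_{j-1},pq_{j-1}+p)$ — of length $\asymp p$ at height $\asymp p^{j}$ — averaged over the $q_{j-1}$ (and $p$) produced by the earlier levels. The binding constraint is the top level $j=21$, where the interval length $\asymp p$ is exactly the $21$st root of the height $\asymp p^{21}$: this is precisely the regime handled by the argument proving Theorem~\ref{t1}, and it is the reason the sequence can be pushed to length $21$ and, with the present exponent, no further. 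For $2\le j\le 20$ the intervals are comfortably longer than the critical length, so Theorem~\ref{t1} applies there with room to spare, and its power-of-logarithm exceptional set (with the constant $B$ taken large) allows the bad values of $q_{j-1}$ to be discarded at each of the finitely many levels at a cost of only a bounded power of $\log p$.

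The second moment is entirely parallel. We have $\int_{1}^{2}\mathcal{N}(X,\alpha)^{2}\,d\alpha=\sum_{p,p'\le X}|E_{p}\cap E_{p'}|$; the diagonal contributes $\sum_{p}|E_{p}|=\int\mathcal{N}$, which is of lower order, and for $p\ne p'$ one evaluates the measure of the pair of interlocking prime chains and shows it equals $\bigl(1+o(1)\bigr)|E_{p}||E_{p'}|$ with a power-of-logarithm error — a computation of the same type as for the first moment, now carrying two chains — so that summing over $p\ne p'$ reproduces $(\int\mathcal{N})^{2}$ up to the acceptable error, giving the stated bound. For part~(2) one repeats everything with $\gamma_{k}:=(p\alpha)^{k}$ in place of $p^{k}\alpha$: since $\gamma_{k+1}=(p\alpha)\gamma_{k}$, if $[\gamma_{k}]=q_{k}$ then, as $\alpha$ ranges over the interval (of length $\asymp p^{-k}$) on which $q_{1},\dots,q_{k}$ are fixed, $\gamma_{k+1}$ ranges over an interval of length $\asymp p$ at height $\asymp p^{k+1}$; the chain geometry is identical, so the same analysis goes through with only notational changes.

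The step I expect to be the real obstacle is the top level $j=21$ of the first moment. There one cannot quote Theorem~\ref{t1} as a black box, because the interval centres $pq_{20}$ are confined to a thin, arithmetically constrained set — products of a prime by a prime satisfying all the chain conditions, of density $\asymp p^{-20}$ inside the dyadic block at height $\asymp p^{21}$ — so that a priori a positive proportion of them could be exceptional for Theorem~\ref{t1}. One must therefore revisit the sieve decomposition and Dirichlet-polynomial estimates underlying its proof, now carrying the extra bilinear weight in $q_{20}$ (equivalently, exploiting the averaging over $p$ to smooth out the centres before applying an almost-all estimate), and then control the accumulation of exceptional sets over the $21$ levels of the first moment and the $42$ levels of the second. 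This accounting is the technical crux, but it succeeds precisely because the exceptional bound in Theorem~\ref{t1} carries a power-of-logarithm saving with $B$ arbitrarily large, and because the critical short-interval exponent $\tfrac{1}{21}$ is exactly matched by the length $21$ of the chains.
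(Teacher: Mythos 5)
Your route — the first/second-moment plus Borel--Cantelli metric argument with the condition on $\alpha$ unfolded into prime chains — is exactly what the paper uses: the paper's entire proof of Theorem~\ref{t2} is the single sentence that [\cite{Harman2001}, Lemma~4] should be replaced by a variant of Theorem~\ref{t1}, and Harman's argument in that paper is precisely the one you are reconstructing. You also correctly single out the top chain level $j=21$ as the binding one, since every lower level applies the short-interval input with room to spare.

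However, your diagnosis of the obstacle at level $21$, and the fix you gesture at, need revision. The sparsity of the centers $pq_{20}$ for a \emph{single} $p$ is not where the work is: averaging over $p\sim P$ in a fixed dyadic block, all admissible partial chains together furnish $\asymp P^{21}(\log P)^{-21}$ candidate centers at height $\asymp P^{21}$, while the exceptional set in Theorem~\ref{t1} at that height has cardinality $\ll P^{21}(\log P)^{-B}$ with $B$ arbitrarily large; taking $B>21$ renders the bad centers negligible on average, so Theorem~\ref{t1} can in fact be used as a black box here once the averaging over $p$ is made explicit, and one should not expect to need to ``revisit the sieve decomposition and Dirichlet-polynomial estimates carrying the extra bilinear weight in $q_{20}$.'' The point that genuinely calls for a ``variant'' of Theorem~\ref{t1} — and which your write-up does not confront — is more elementary: at level $21$ the interval $[pq_{20},pq_{20}+p)$ has length $\asymp p$ at height $n=pq_{20}\asymp p^{21}$, i.e.\ length $\asymp n^{1/21}$, which is a factor $n^{\varepsilon}$ \emph{shorter} than the length $n^{1/21+\varepsilon}$ that Theorem~\ref{t1} controls as stated, and this factor cannot be removed by sending $\varepsilon\to 0$ since $\varepsilon$ is embedded throughout the proof (in $\eta$, $T_0$, and the $(\log x)^{-B/\varepsilon}$ savings). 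Absorbing precisely this gap at the critical exponent is what [\cite{Harman2001}, Lemma~4] is formulated to do and what the paper's word ``variant'' is signalling; your proposal should trace through how Harman packages the almost-all short-interval statement there rather than contemplating a re-derivation of the underlying mean-value estimates. With that correction, your sketch is a faithful — and considerably more explicit — account of the route the paper's one-sentence proof points to.
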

The proof of Theorem~\ref{t2} can be done by replacing [\cite{Harman2001}, Lemma 4] by a variant of our Theorem~\ref{t1}. In this way, the ratio $\frac{20}{19}$ in [\cite{Harman2001}, Theorem 4] can be improved to $\frac{21.5}{20.5}$. However, due to the lack of a positive lower bound when the exponent is $\frac{1}{22}$, we cannot add even one more term on our Theorem~\ref{t2}.

The last application of our Theorem~\ref{t1} is about the Three Primes Theorem with small prime solutions, improving the previous result of Cai \cite{Cai2013} by reducing the exponent $\frac{11}{400}$ to $\frac{11}{420}$.
\begin{theorem}\label{t3}
Let $Y$ denote a sufficiently large odd integer. The equation
$$
Y = p_1 + p_2 + p_3, \quad p_1 \leqslant Y^{\frac{11}{420}}
$$
is solvable.
\end{theorem}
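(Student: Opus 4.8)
The plan is to reduce Theorem~\ref{t3} to a statement about primes in almost all short intervals, namely to Theorem~\ref{t1}, via the Hardy--Littlewood circle method. Write $Y$ for the given large odd integer and set $P = Y^{11/420}$. One seeks to show that the weighted count
\[
  R(Y) = \sum_{\substack{p_1 + p_2 + p_3 = Y \\ p_1 \leqslant P}} (\log p_1)(\log p_2)(\log p_3)
\]
is positive. As usual, $R(Y) = \int_0^1 S_1(\alpha) S(\alpha)^2 e(-\alpha Y)\, d\alpha$, where $S(\alpha) = \sum_{p \leqslant Y} (\log p) e(\alpha p)$ is the full exponential sum over primes up to $Y$ and $S_1(\alpha) = \sum_{p \leqslant P} (\log p) e(\alpha p)$ is the truncated one. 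First I would dissect $[0,1]$ into major arcs $\mathfrak{M}$ around rationals with small denominator and minor arcs $\mathfrak{m}$. On the major arcs the standard evaluation gives the expected singular-series main term of size $\asymp P Y$ (the singular series $\mathfrak{S}(Y) \gg 1$ since $Y$ is odd), together with an error that is acceptable provided the major arcs are taken of the usual width; this part is routine and follows, e.g., the treatment in Cai's paper with only a change of the exponent bookkeeping.

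The heart of the matter is the minor arc contribution $\int_{\mathfrak{m}} S_1(\alpha) S(\alpha)^2 e(-\alpha Y)\, d\alpha$, and here is where Theorem~\ref{t1} enters. Rather than bounding $S_1$ pointwise on $\mathfrak{m}$ (which is too lossy at this exponent), the idea is to pull out $S_1$ in $L^\infty$ over $\mathfrak{m}$ and absorb the remaining $S^2$ by Parseval:
\[
  \left| \int_{\mathfrak{m}} S_1(\alpha) S(\alpha)^2 e(-\alpha Y)\, d\alpha \right|
  \leqslant \left( \sup_{\alpha \in \mathfrak{m}} |S_1(\alpha)| \right) \int_0^1 |S(\alpha)|^2\, d\alpha
  \ll \left( \sup_{\alpha \in \mathfrak{m}} |S_1(\alpha)| \right) Y \log Y .
\]
To beat the main term $\asymp PY = Y^{1+11/420}$ one therefore needs the minor-arc bound $\sup_{\alpha \in \mathfrak{m}} |S_1(\alpha)| = o\!\left( P / \log Y \right)$, i.e. essentially a power saving for $S_1(\alpha)$ on all of $\mathfrak{m}$. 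This is precisely the kind of estimate that follows from the short-interval information underlying Theorem~\ref{t1}: a bound $S_1(\alpha) = o(P/\log Y)$ on the minor arcs is equivalent, by a standard completion/Gallagher argument, to square-mean control of $\psi(x+x^{11/420+\varepsilon}\,;\,\ldots) - \psi(x\,;\,\ldots)$ — control of primes in almost all intervals of length $P^{1+\varepsilon}$ around points up to $Y$ — which is exactly what Theorem~\ref{t1} (applied with its exponent $1/21 < 11/420 \cdot (1 + o(1))$, after adjusting ranges) supplies. I would therefore quote Theorem~\ref{t1}, convert it into the required $L^2$ estimate for the prime-counting function in short intervals via Gallagher's lemma, and then transfer that back to a minor-arc bound for $S_1$.

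The main obstacle I anticipate is not the circle-method skeleton — that is by now standard, and Cai's paper with exponent $11/400$ provides a template — but rather the precise conversion between Theorem~\ref{t1} (a lower bound for the number of primes in almost all short intervals, via a sieve lower bound, \emph{not} an asymptotic formula) and the $L^2$/minor-arc input actually needed. A pure lower bound for $\pi(x + h) - \pi(x)$ for almost all $x$ does not immediately give an upper bound for the variance $\int |\psi(x+h) - \psi(x) - h|^2\, dx$; what one really uses is the Dirichlet-polynomial and zero-density machinery developed \emph{inside} the proof of Theorem~\ref{t1}, which does yield such mean-square estimates. So the honest plan is: (i) extract from the proof of Theorem~\ref{t1} the mean-value estimate for the relevant Dirichlet polynomials over the short range $[P, P^{1+\varepsilon}]$; (ii) feed this into Gallagher's lemma to bound $\sup_{\mathfrak{m}} |S_1(\alpha)|$; (iii) combine with the Parseval bound above and the major-arc main term; (iv) check that $11/420$ is the optimal exponent the method yields, i.e. that the width of the major arcs and the quality of the minor-arc saving are exactly balanced at $11/420 = \tfrac{1}{21} \cdot \tfrac{420}{441} \cdot \tfrac{441}{420}$ — more plainly, that the loss of a factor $\log$-power in Theorem~\ref{t1} forces the exponent up from $1/21$ to $11/420$. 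Once (i)–(iv) are in place, $R(Y) > 0$ follows, proving the theorem.
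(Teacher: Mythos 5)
Your proposal takes a genuinely different route from the paper's, and the route has a gap that cannot be repaired. The paper's proof of Theorem~\ref{t3} is short: Section~8 establishes an \emph{upper} bound $S(\mathcal{A},(2X)^{1/2}) \leqslant 2.88\,\eta x/\log x$ for almost all $x$, pairs it with the lower bound $\geqslant 0.04\,\eta x/\log x$ from Section~7, and then observes that these two sieve bounds are exactly the inputs needed to run the argument of \cite{Cai2013} with the exponent $\frac{1}{20}$ replaced by $\frac{1}{21}$. The factor $\frac{11}{20}$ relating the short-interval exponent to the small-prime exponent (so $\frac{11}{420} = \frac{11}{20}\cdot\frac{1}{21}$, and note that $\frac{11}{420} < \frac{1}{21}$, contrary to what your last paragraph suggests) is a fixed structural constant coming from Cai's sieve argument, not a $\log$-power loss in Theorem~\ref{t1}. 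Your proposal never mentions the upper bound, which is the whole new content of Section~8.

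The naive circle-method skeleton you sketch cannot close. To beat the main term $\asymp PY$ with $P = Y^{11/420}$ by pulling $S_1$ out in $L^\infty$ over the minor arcs, you would need $\sup_{\alpha\in\mathfrak m}|S_1(\alpha)| = o(P/\log Y)$. But the minor arcs unavoidably contain points $\alpha = a/q + \beta$ with $q$ anywhere up to $Y^{1-\varepsilon}$, and for $q$ in the range $P^{3/2} \ll q \ll Y^{1-\varepsilon}$ the Vinogradov--Vaughan bound $S_1(\alpha) \ll (Pq^{-1/2} + P^{4/5} + (Pq)^{1/2})(\log P)^{c}$ is worse than trivial. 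No pointwise cancellation for a prime exponential sum of length $P$ at such $\alpha$ is provable by current technology, and it certainly does not follow from Theorem~\ref{t1}: Gallagher's lemma passes from an $L^2$ bound on exponential sums near rationals to a mean-square estimate for primes in short intervals, not from an almost-all lower bound for $\pi(x+h)-\pi(x)$ to an $L^\infty$ bound on $S_1$ over all of $\mathfrak m$. Neither does the Cauchy--Schwarz variant $\bigl(\int_{\mathfrak m}|S_1|^2\bigr)^{1/2}\bigl(\int|S|^4\bigr)^{1/2}$, nor pulling out $\sup_{\mathfrak m}|S|$ instead, give $o(PY)$ once $P$ is this small a power of $Y$. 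The mechanism that actually makes the proof work is the one you dismissed as an obstacle: one feeds the explicit almost-all upper and lower sieve bounds into Cai's combinatorial/sieve framework rather than trying to manufacture a pointwise minor-arc estimate for $S_1$.
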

The exponent $\frac{11}{420}$ comes from a weaker version of our Theorem~\ref{t1} with exponent $\frac{1}{21}$. Note that we have $u_0 > 0.04$ in this case, and we can show that $u_1 < 2.88$ by similar arguments as in \cite{Cai2013}. By using the vector sieve together with the same bounds for $v_0$ and $v_1$ as in \cite{Cai2013}, the proof of our Theorem~\ref{t3} is essentially the same as the proof of [\cite{Cai2013}, Theorem]. However, our sieve machinery is not very numerically significant when the exponent drops to $\frac{1}{21.5}$ (that is, we only have $u_0 > 0.005$ by our Theorem~\ref{t1}), we cannot use vector sieve to prove Theorem~\ref{t3} with condition $p_1 \leqslant Y^{\frac{11}{430}}$.


\newpage

\bibliographystyle{plain}
\bibliography{bib}

\end{document}